\newtheorem{thm}{Theorem}
\newtheorem*{oldthm}{Theorem}
\newtheorem{lemma}{Lemma}
\newtheorem{prop}{Proposition}
\newtheorem{cor}{Corollary}
\theoremstyle{remark}
\newtheorem{remark}{Remark}
\newtheorem*{acknowledge}{Acknowledgments}
\newcommand{\R}{\ensuremath{\mathbb{R}}}
\newcommand{\T}{\ensuremath{\mathbb{T}}}
\newcommand{\Z}{\ensuremath{\mathbb{Z}}}
\newcommand{\C}{\ensuremath{\mathbb{C}}}
\newcommand{\N}{\ensuremath{\mathbb{N}}}
\newcommand{\Q}{\ensuremath{\mathbf{\mathfrak{f}}}}
\newcommand{\bfa}{\ensuremath{\mathbf{a}}}
\newcommand{\bfq}{\ensuremath{\mathbf{q}}}
\newcommand{\eps}{\ensuremath{\varepsilon}}
\newcommand{\la}{\ensuremath{\lambda}}
\newcommand{\what}{\ensuremath{\widehat{\omega_\la}}}
\newcommand{\eq}{\begin{equation}}
\newcommand{\ee}{\end{equation}}
\newcommand{\p}{\ensuremath{\mathfrak{p}}}
\newcommand{\ZFT}[1]{\widehat{#1}}
\newcommand{\form}{\mathfrak{f}}
\newcommand{\dimension}{n}
\renewcommand{\degree}{k}
\newcommand{\radius}{N} 
\newcommand{\bigradius}{\Lambda}
\newcommand{\vonMangoldtmeasure}{\omega}
\newcommand{\theprimes}{\mathbb{P}}
\newcommand{\apoint}[1]{{\bf #1}}
\newcommand{\unit}{a}
\newcommand{\modulus}{q}
\DeclareMathOperator{\lcm}{lcm}
\newcommand{\Gammakn}[1][{n,k}]{\Gamma_{#1}}
\newcommand{\singseries}{\mathfrak S(\lambda; \mathbf a, \mathbf q)}
\numberwithin{equation}{section}
\title{
On the ergodic Waring--Goldbach problem}
\author[T. Anderson]{Theresa C. Anderson}
\address{
	Department of Mathematics
	\\ University of Wisconsin, Madison
	\\ 480 Lincoln Dr.
	\\ Madison, WI 53705
	\\	U.S.A.
}
\email{tcanderson@math.wisc.edu}
\author[B. Cook]{Brian Cook}
\address{
	Department of Mathematical Sciences
	\\ Kent State University
	\\ Mathematics and Computer Science Building 233 Summit Street
	\\ Kent, OH 44242
	\\	U.S.A.
}
\email{bcook25@kent.edu}
\author[K. Hughes]{Kevin Hughes}
\address{
    School of Mathematics
	\\	The University of Bristol
	\\	Howard House
	\\	Queens Avenue
	\\	Bristol, BS8 1TW
	\\	UK
	\\ and the Heilbronn Insitute for Mathematical Research, Bristol, UK
}
\email{Kevin.Hughes@bristol.ac.uk}
\author[A. Kumchev]{Angel Kumchev}
\address{
	Department of Mathematics
	\\	Towson University
	\\	8000 York Road
	\\	Towson, MD 21252
	\\	U.S.A.
}
\email{akumchev@towson.edu}
\begin{document}
\maketitle

\begin{abstract}
We prove an asymptotic formula for the Fourier transform of the arithmetic surface measure associated to the Waring--Goldbach problem and provide several applications, including bounds for discrete spherical maximal functions along the primes and distribution results such as ergodic theorems. 
\end{abstract}

\section{Introduction}

In this paper, we study several questions on the interface between harmonic analysis and analytic number theory. Our results are motivated in part by the study of discrete maximal functions in harmonic analysis, in part by applications of those maximal functions in ergodic theory, and in part by connections to classical problems in analytic number theory---in particular, the Waring--Goldbach problem.

The harmonic analytic motivation behind our work comes from celebrated results by Bourgain \cite{Bourgain_maximal_ergodic, Bourgain_pointwise_ergodic} on ergodic averages over certain sequences of integers and later work of Magyar, Stein and Wainger \cite{MSW} on discrete spherical maximal functions. Driven by applications in ergodic theory, Bourgain \cite{Bourgain_maximal_ergodic} initiated the study of discrete maximal functions.  A key feature of Bourgain's approach is his use of the circle method from analytic number theory. With this in  mind, Magyar \cite{Magyar_dyadic} provided some partial results on discrete maximal functions related to Waring's problem, leading to Magyar, Stein and Wainger's consideration of the discrete spherical averages
\[ S_\lambda f(\mathbf x) := \frac 1{\#\{ \mathbf y \in \mathbb Z^n : |\mathbf y|_2^2 = \lambda \} } \sum_{|\mathbf y|_2^2 = \lambda} f(\mathbf x - \mathbf y), \]
along with their maximal function
\[ S_*f (\mathbf x) := \sup_{\lambda \in \mathbb N} |S_\lambda f(\mathbf x) |. \]
Here, $f : \mathbb Z^n \to \mathbb C$ and $| \cdot |_2$ denotes the Euclidean norm on $\mathbb R^n$ (thus, $|\mathbf y|_2^2 = y_1^2 + \dots + y_n^2$). 

Magyar, Stein and Wainger \cite{MSW} provided a complete answer to the question of $\ell^p$-bound\-ed\-ness for the maximal operator $S_*$: they proved that, when $n\geq5$, $S_*$ is bounded on $\ell^p(\mathbb Z^n)$ when $p > n/(n-2)$ and unbounded when $p \leq n/(n-2)$. Furthermore, it is shown that this result cannot hold when $n<5$.  In their work, they took the symbiosis between harmonic analysis and number theory a step further by using a full-fledged application of the circle method to analyze the Fourier transform of the arithmetic surface measure underlying the discrete averages $S_\lambda f$. Particularly, define
\[ \widehat{\sigma_\lambda}(\bm\xi) :=  \frac {1}{\#\{\mathbf x \in \mathbb{Z}^n:|\mathbf x|_2^2=\lambda\}}\sum_{|\mathbf x|_2^2=\la} e(\mathbf x\cdot\bm\xi), \]
where $\bm\xi \in \T^n$ and, as usual, $e(z) := e^{2\pi iz}$. Magyar, Stein and Wainger established the following approximation formula for $\widehat{\sigma_\lambda}(\bm\xi)$.

\begin{oldthm}[Magyar--Stein--Wainger]
When $n \ge 5$, one has the decomposition
\begin{equation*}\label{eq:MSW} \widehat{\sigma_\lambda}(\bm\xi) = \sum_{q=1}^\infty \sum_{\substack{ 1 \le a \le q\\ (a,q) = 1}} e(-a\lambda/q) \sum_{\mathbf b \in \Z^n} G(a,q; \mathbf b) \Psi(q\bm\xi - \mathbf b) \widetilde{d\sigma_{{\lambda}}}(\bm\xi-q^{-1}\mathbf b) + \widehat{E_\lambda}(\bm\xi), 
\end{equation*}
where $\widetilde{d\sigma_{{\lambda}}}$ is the continuous Fourier transform of the surface measure of the sphere of radius $\sqrt{\lambda}$, 
\[ G(a,q;\mathbf b) = \sum_{\mathbf x \in (\Z/q\Z)^n} e \left( \frac{a|\mathbf x|_2^2 + \mathbf b \cdot \mathbf x}{q} \right) \]
is an $n$-dimensional Gauss sum, and \( \Psi \) is a smooth bump function which is $1$ on \([-1/8,1/8]^n\) and supported in \([-1/4,1/4]^n\). The convolution operators $E_\lambda$ associated with the error terms $\widehat{E_\lambda}$ satisfy the maximal inequality
\[ \left\| \sup_{\Lambda \leq \lambda \leq 2\Lambda} |E_\la| \right\|_{\ell^2(\Z^n) \to \ell^2(\Z^n)} \lesssim \Lambda ^{1-n/4} \]
for all $\Lambda >0$.   
\end{oldthm}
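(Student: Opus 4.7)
The natural route is the Hardy--Littlewood circle method followed by Poisson summation. By orthogonality on $\mathbb T$,
\[
\#\{\mathbf x\in\mathbb Z^n:\form_{n,2}(\mathbf x)=\la\}\,\widehat{\si_\la}(\bm\xi)=\int_0^1 e(-\al\la)\,T(\al,\bm\xi)\,d\al,\qquad T(\al,\bm\xi)=\sum_{|\mathbf x|\le\sqrt\la}e\bigl(\al\form_{n,2}(\mathbf x)+\bm\xi\cdot\mathbf x\bigr),
\]
so the plan is to perform a Farey dissection of $[0,1)$ with parameter $Q=\la^{1/2-\de}$, producing major arcs $\mathfrak M(\unit,\modulus)$ around reduced fractions $\unit/\modulus$ with $\modulus\le Q$. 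On $\mathfrak M(\unit,\modulus)$, substitute $\al=\unit/\modulus+\be$ and, for each $\mathbf b\in\mathbb Z^n$ nearby $\modulus\bm\xi$, write $\bm\xi=\modulus^{-1}\mathbf b+\bm\eta$. Splitting $\mathbf x=\modulus\mathbf y+\mathbf r$ by residues modulo $\modulus$ factorizes $T$ into the arithmetic sum $G(\unit,\modulus;\mathbf b)$ (the $\mathbf r$-sum absorbing the linear phase $\mathbf b\cdot\mathbf r/\modulus$) times an inner $\mathbf y$-sum well approximated by its integral; integrating over $\be$ and rescaling $\mathbf x=\sqrt\la\,\mathbf u$ then turns this integral into $\widetilde{d\si_{\sqrt\la}}(\bm\xi-\modulus^{-1}\mathbf b)$. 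The smooth bump $\Psi(\modulus\bm\xi-\mathbf b)$ enforces the localization $|\bm\eta|<1/(4\modulus)$ that makes the $\mathbf b$-decomposition unique, and collecting the phase $e(-\al\la)=e(-\unit\la/\modulus)e(-\be\la)$ then produces the stated main-term formula after summing over $(\unit,\modulus,\mathbf b)$.

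For the complementary minor arcs $\mathfrak m$, Weyl's inequality for squares gives $|T(\al,\bm\xi)|\lesssim\la^{n/4+\eps}$ whenever $\al\in\mathfrak m$, uniformly in $\bm\xi$. Integrating over $\mathfrak m$ and dividing by $\#\{\form_{n,2}(\mathbf x)=\la\}\asymp\la^{n/2-1}$ yields the pointwise bound $|\widehat{E_\la}(\bm\xi)|\lesssim\la^{1-n/4+\eps}$. The same size controls the overflow created by extending the $\modulus$-sum to all of $\mathbb N$ and the $\mathbf b$-sum to all of $\mathbb Z^n$ outside the supports of the $\Psi(\modulus\bm\xi-\mathbf b)$, using the rapid decay of the continuous Fourier transform $\widetilde{d\si_{\sqrt\la}}$.

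The main technical obstacle is upgrading this per-$\la$ Fourier-side bound to the maximal inequality $\|\sup_{\La\le\la\le 2\La}|E_\la|\|_{\ell^2\to\ell^2}\lesssim\La^{1-n/4}$, since the naive estimate $\|\sup_\la|E_\la f|\|_{\ell^2}^2\le\sum_{\la\asymp\La}\|E_\la f\|_{\ell^2}^2$ loses an unaffordable factor of $\La$. My plan is to decompose $\widehat{E_\la}$ dyadically by denominator scale $2^j<\modulus\le 2^{j+1}$, exploit the near-orthogonality of the translates $\Psi(\modulus\bm\xi-\mathbf b)$ at fixed scale, and harness two sources of oscillation in $\la$: the stationary-phase decay of $\widetilde{d\si_{\sqrt\la}}(\bm\xi-\modulus^{-1}\mathbf b)$ in $|\bm\xi-\modulus^{-1}\mathbf b|$ and $\la$, and the cancellation in the arithmetic factor $\sum_{(\unit,\modulus)=1}e(-\unit\la/\modulus)G(\unit,\modulus;\mathbf b)$ as $\la$ varies. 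Interpolating an $\ell^2$ square-function bound at each dyadic scale against the trivial $\ell^\infty\to\ell^\infty$ estimate and summing geometrically in $j$ should then deliver the stated maximal bound.
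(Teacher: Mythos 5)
This statement is cited from Magyar--Stein--Wainger \cite{MSW}; the present paper does not prove it and uses it only as motivation for Theorem~\ref{thm:approximation_formula}, so there is no internal proof to compare against. Your sketch of the main-term decomposition (circle method, residue splitting on major arcs, extraction of the Gauss sum, passage to $\widetilde{d\sigma_{\sqrt\lambda}}$) is the standard route and is consistent both with \cite{MSW} and with the proof of Theorem~\ref{thm:approximation_formula} in Section~\ref{sec3}.

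The genuine gap is in your final paragraph on the maximal inequality, which is the whole content of the theorem beyond the circle-method approximation. The mechanism is not a dyadic-in-$q$ decomposition of $\widehat{E_\lambda}$ plus $\ell^2$--$\ell^\infty$ interpolation. The key observation (abstracted here as Lemma~\ref{lem:L2 bound}, and already present in \cite{MSW}) is that the error multiplier has the form $\widehat{E_\lambda}(\bm\xi)=N(\lambda)^{-1}\int_{\mathfrak m}T(\theta,\bm\xi)\,e(-\lambda\theta)\,d\theta$, so that the $\lambda$-dependence sits entirely in the unimodular phase $e(-\lambda\theta)$; one may therefore pull the supremum over $\lambda$ inside the $\theta$-integral and conclude
\[
\Big\|\sup_{\Lambda\le\lambda\le 2\Lambda}|E_\lambda|\Big\|_{\ell^2\to\ell^2}\lesssim\Lambda^{1-n/2}\int_{\mathfrak m}\sup_{\bm\xi\in\T^n}|T(\theta,\bm\xi)|\,d\theta,
\]
which is then estimated by pairing a pointwise minor-arc Weyl bound on one factor $S_N(\theta,\xi_j)$ with a Hua-type mean-value estimate on the product of the remaining factors --- exactly the architecture of \S\ref{sec3.1} in this paper. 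Several things in your sketch do not hold together: interpolating an $\ell^2$ bound against the trivial $\ell^\infty\to\ell^\infty$ bound yields $\ell^p$ estimates for $p\ge 2$, not the $\ell^2$ maximal inequality you need; the ``cancellation in $\sum_{a}e(-a\lambda/q)G(a,q;\mathbf b)$'' is not exploited at all (one takes absolute values, uses $|G(a,q;\mathbf b)|\lesssim q^{n/2}$ for the unnormalized Gauss sum, and sums a convergent series in $q$); and the translates $\Psi(q\bm\xi-\mathbf b)$ together with the dyadic decomposition by denominator scale belong to the \emph{main} term, whose maximal function is handled by a separate $\ell^1$--$\ell^2$ interpolation in the spirit of Section~\ref{sec:main_term} here, not to $\widehat{E_\lambda}$. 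Finally, your minor-arc argument carries a $\lambda^\eps$ loss that the stated bound does not permit.
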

	
This theorem has served as a model for several authors \cite{Magyar:ergodic,AS,Magyar_discrepancy,Hughes_Vinogradov} who have studied the maximal functions of the discrete surface measures on other arithmetic surfaces over the integers. 
It is also the inspiration for one of the results of the present paper---see Theorem~\ref{thm:approximation_formula} below. 
However, in contrast to earlier work on discrete maximal functions, we study the more singular maximal function of the ``prime points'' on the $k$-sphere. 
The goal of this paper is to study the distribution of points with prime coordinates on the algebraic surface 
\begin{equation}\label{eq1.1}
\form(\mathbf x) := x_1^k+ \dots + x_n^k = \lambda, 
\end{equation} 
for $\lambda \in \mathbb N$. 
By combining number-theoretic techniques from the study of the Waring--Goldbach problem with ideas from harmonic analysis, we are able to prove several results on the distribution of such points, including: an equidistribution theorem, an $L^2$-ergodic theorem, and a pointwise ergodic theorem. A quantitative version of our equidistribution theorem, Theorem \ref{mainmaxfunction} below,  is another main result: we take the spherical maximal function in a new direction by proving $\ell^p(\Z^n)$ bounds for a discrete variant along the primes. 

While restrictions of results about integer sequences to the primes are common in both number theory and ergodic theory, the study of maximal functions related to the primes has been limited to sequential averages (e.g. \cite{Bourgain_approach, Wierdl, Nair, MirekTrojan, MirekTrojanZorin}); this paper appears to be the first work on such restricted problems in harmonic analysis related to prime points on forms in many variables. 
The new obstacles arising in these problems require further development of the Bourgain--Magyar--Stein--Wainger paradigm of using the circle method to decompose the maximal operator and thus to reduce the problem to estimates for exponential sums and integrals. 
Earlier works have been able to employ a classical variant of the circle method which uses the Poisson summation formula to estimate the major arc contribution. The restriction to primes forces us to draw on our knowledge about equidistribution of primes in arithmetic progressions and to employ more primitive tools (compared to Poisson summation) to do so. Therefore, in order to be able to obtain any result at all, we blend mean value theorems of Vinogradov's type into our minor arc analysis. In contrast, in problems over unrestricted integers previous approaches were able to rely merely on $L^\infty$ bounds for the relevant exponential sums. Indeed, insights gained from this present work have already led us to improve (in \cite{ACHK}) on the results of the third author~\cite{Hughes_Vinogradov, Hughes_restricted} on (unrestricted) integer points on the $k$-sphere. 

The study of prime points $\mathbf p = (p_1, \dots, p_n)\in \mathbb P^n$ (here $\mathbb P$ is the set of primes) on the surface \eqref{eq1.1} is known in number theory as the Waring--Goldbach problem. Classic work by Hua \cite{Hua_book} established the asymptotic for the number of representations of a large natural number $\lambda$ as a sum of $n$ $k$th powers of primes when $k$ and $n$ are positive integers such that $n > 2^k$ and $\lambda$ belongs to an appropriate infinite arithmetic progression $\Gammakn$. Write $\log\mathbf x = (\log x_1) \cdots (\log x_n)$, and let $R(\lambda)$ denote the number of prime solutions of \eqref{eq1.1}, counted with logarithmic weights: 
\[ R(\lambda) = \sum_{\form(\mathbf p) = \lambda} \log\mathbf p, \]
where (and through the remainder of the paper) $\mathbf p$ denotes a vector in $\theprimes^n$. Using the Hardy--Littlewood circle method, Hua proved that when $\lambda \to \infty$, one has the asymptotic 
\begin{equation}\label{eq1.2}
R(\lambda) \sim \mathfrak S_{n,k}(\lambda) \lambda^{{n/k - 1}},
\end{equation} 
where $\mathfrak S_{n,k}(\lambda)$ is a product of local densities:
\[  \mathfrak S_{n,k}(\lambda) = \prod_{p \le \infty} \mu_p(\lambda).  \]
Here $\mu_p(\lambda)$ with $p < \infty$ is related to the solubility of \eqref{eq1.1} over the $p$-adic field $\mathbb Q_p$, and $\mu_\infty(\lambda)$ to solubility over the reals. In particular, the set $\Gammakn$ is determined by the requirement that $\mu_p(\lambda) >0$ for all primes $p$. Some examples of progressions $\Gammakn$ (see Chapter VIII in Hua~\cite{Hua_book} for more details, including the full definition of $\Gammakn$) include: 
\begin{itemize}
\item $\Gammakn$ is the residue class $\lambda \equiv n \pmod 2$ when $k$ is odd;
\item $\Gammakn[5,2]$ is the residue class $\lambda \equiv 5 \pmod {24}$;
\item $\Gammakn[17,4]$ is the residue class $\lambda \equiv 17 \pmod {240}$.
\end{itemize}   

The starting point to our main results lies in extending \eqref{eq1.2} to an approximation formula for the Fourier transform of the arithmetic probability measure 
\[ \omega_\lambda({\mathbf x}) := \frac{1}{R(\lambda)}{\bf 1}_{\{{\mathbf p} \in \theprimes^\dimension: \form_{n,k}(\mathbf p)=\lambda\}}({\mathbf x}) \log {\mathbf x}, \]
defined when $R(\lambda)>0$. The Fourier transform of this measure is the exponential sum 
\begin{equation}\label{eq1.3}
\widehat{\omega_\lambda}(\bm\xi) = \frac{1}{R(\lambda)} \sum_{\form(\mathbf p)=\lambda} (\log \mathbf p) e(\mathbf p \cdot \bm\xi).
\end{equation}
We note that $\widehat{\omega_\lambda}$ is defined only for sufficiently large $\lambda\in\Gammakn$ and $n$ sufficiently large in terms of $k$. Based on the current state of affairs in the Waring--Goldbach problem \cite{Kumchev_Wooley1,Kumchev_Wooley2}, the latter means that for large $k$, the value of $n$ must be at least as large as $4k\log k$. In reality, the true size of $R(\la)$ is only known for $n \ge k^2-k + O(\sqrt k)$, so it only makes sense to study the Fourier transform  $\widehat{\omega_\lambda}(\bm\xi)$ when $n \ge k^2 - k$. 

Our first theorem is a variant of the Magyar--Stein--Wainger theorem above for the Fourier transform \eqref{eq1.3}. Before stating the result, we need to introduce some notation. Given an integer $q \ge 1$, we write $\mathbb Z_q = \mathbb Z/q\mathbb Z$ and $\mathbb U_q = \mathbb Z_q^*$, the group of units. If $\mathbf q = (q_1, \dots, q_n) \in \mathbb Z^n$, with $\mathbf q \ge 1$ (by which we mean that $q_i \ge 1$ for all $i$), we write $\mathbb U_{\mathbf q} = \mathbb U_{q_1} \times \dots \times \mathbb U_{q_n}$; it is also convenient to set $\mathbf{a/q} = (a_1/q_1, \dots, a_n/q_n)$ and $\bfa\bfq = (a_1q_1, \dots, a_nq_n)$ if $\mathbf a = (a_1, \dots, a_n)$ is another vector in $\mathbb Z^n$. Given $\lambda \in \mathbb Z$ and $\mathbf a, \mathbf q \in \mathbb Z^n$, with $\mathbf q \ge 1$, we now define 
\begin{gather*} 
g(a,q; b,r) = \frac 1{\varphi([q,r])} \sum_{x \in \mathbb U_{[q,r]}} e\bigg( \frac {ax^k}q + \frac {bx}r \bigg), \\
\singseries = \sum_{q=1}^\infty \sum_{a \in \mathbb U_q} e( -\lambda a/q ) \prod_{i=1}^\dimension g(a, q; a_i, q_i),
\end{gather*}
where $\varphi$ is Euler's totient function and $[q,r] = \lcm[q,r]$. We fix a smooth bump function $\psi$ such that
\[ \mathbf 1_{\mathcal Q}(\mathbf x) \le \psi(\mathbf x) \le \mathbf 1_{\mathcal Q}(\mathbf x/2), \]
where $\mathbf 1_{\mathcal Q}$ is the indicator function of the cube $\mathcal Q = [-1,1]^n$, and we write $\psi_h(\mathbf x) = \psi(h\mathbf x)$ for $h > 0$. We also define $n_1(k) := \min(2^k, k^2 + k) + 3$.

\newcommand{\approximationformula}{Approximation Formula}

\begin{thm}[\approximationformula] \label{thm:approximation_formula}
Let $k \geq 2$ and $n \geq n_1(k)$. Also, let $\lambda \in \Gammakn$ be large, and suppose that $\lambda^{1/k} \le N \lesssim \lambda^{1/k}$. For any fixed $B>0$, there exists a $C = C(B) > 0$ such that one has the decomposition
\begin{equation}\label{eq:approximation_formula}
\what(\bm\xi) = \frac {\lambda^{n/k-1}}{R(\lambda)} \sum_{ 1 \le \bfq \le Q} \sum_{\bfa \in \mathbb U_{\bfq}} \singseries \psi_{N/Q}(\bfq\bm\xi-\bfa) \widetilde{d\sigma_{\lambda}}(\bm\xi-\bfa/\bfq) + \widehat {E_\lambda}(\bm\xi),
\end{equation}
where $Q = (\log N)^C$,  $\widetilde{d\sigma_{\lambda}}$ is the Fourier transform of the $k$-spherical surface measure on the surface defined by \eqref{eq1.1} in $\R_+^n$ $($cf. \eqref{eq:major arcs 3}$)$, and the convolution operators $E_\lambda$ associated with the error terms $\widehat {E_\lambda}(\bm\xi)$ satisfy the maximal inequality
\begin{equation}\label{eq:dyadic_error_bound}
\left\| \sup_{\Lambda \le \lambda \le 2\Lambda} |E_\lambda| \right\|_{\ell^2(\Z^n) \to \ell^2(\Z^n)} \lesssim (\log \Lambda)^{-B}
\end{equation}
for all $\Lambda > 0$. 
\end{thm}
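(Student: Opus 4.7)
The plan is to carry out a Hardy--Littlewood circle method analysis of the single-variable integral representation of $R(\lambda)\what(\bm\xi)$ and then bootstrap the resulting pointwise Fourier bounds into the required maximal inequality. Starting from the diagonal detection identity $\mathbf{1}_{\form(\mathbf p)=\lambda} = \int_{\mathbb{T}} e(\alpha(\form(\mathbf p)-\lambda))\,d\alpha$, I would write
\[
R(\lambda)\what(\bm\xi) = \int_{\mathbb{T}} e(-\lambda\alpha) \prod_{i=1}^n f_N(\alpha,\xi_i)\,d\alpha, \qquad f_N(\alpha,\xi) := \sum_{p\le N}(\log p)\, e(\alpha p^k+\xi p),
\]
and dissect $\mathbb{T}$ into major arcs $\mathfrak{M}(Q)$ centered at fractions $a/q$ with $q\le Q := (\log N)^C$ together with their complement $\mathfrak{m}$.

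For the minor arc contribution I would combine the Kumchev--Wooley Waring--Goldbach minor arc bound $\sup_{\alpha\in\mathfrak{m},\,\xi}|f_N(\alpha,\xi)| \lesssim N(\log N)^{-A}$ (valid for any $A>0$ precisely because $n\ge n_1(k)$) with a Hua-type mean value estimate on the remaining $f_N$ factors; after dividing by $R(\lambda) \asymp N^{n-k}$, this yields a pointwise Fourier bound of $(\log \Lambda)^{-C'}$ for any prescribed $C'$. On the major arcs, for $\alpha = a/q+\beta$ with $|\beta|\le Q/N^k$, I would apply the Siegel--Walfisz prime number theorem in arithmetic progressions to replace each $f_N(\alpha,\xi_i)$ by a product of a local Gauss-type factor $g(a,q;b_i,q_i)$ (with $b_i/q_i$ a rational approximation to $\xi_i$) and a continuous oscillatory integral $v_N(\beta,\gamma_i) := \int_0^N e(\beta t^k+\gamma_i t)\,dt$. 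Multiplying the $n$ approximations, integrating in $\beta$, summing over $a\in \mathbb{U}_q$ with phase $e(-\lambda a/q)$, and recognizing the resulting $\beta$-integral as $N^{n-k}\widetilde{d\sigma_{\lambda_0}}(N(\bm\xi-\bfa/\bfq))$ would produce the claimed main term, with the bump $\psi_{N/Q}(\bfq\bm\xi-\bfa)$ encoding the major arc condition coordinatewise.

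The hard step is the maximal inequality \eqref{eq:dyadic_error_bound}. By Plancherel, each single-scale operator satisfies $\|E_\lambda\|_{\ell^2(\Z^n)\to\ell^2(\Z^n)} = \|\widehat{E_\lambda}\|_{L^\infty(\mathbb{T}^n)} \lesssim (\log \Lambda)^{-B'}$ for arbitrarily large $B'$, but the crude square-function estimate $\|\sup_\lambda|E_\lambda f|\|_2^2 \le \sum_\lambda \|E_\lambda f\|_2^2$ loses an unacceptable factor of $\Lambda$. To evade this, I would split $E_\lambda$ into a purely minor-arc part $E_\lambda^{\mathfrak{m}}$ and a major-arc \emph{approximation}-error part $E_\lambda^{\mathfrak{M}}$. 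On $E_\lambda^{\mathfrak{m}}$, the uniform Fourier bound combined with a trivial $\ell^\infty \to \ell^\infty$ estimate on the convolution kernel allows a Rademacher--Menshov dyadic maximal argument to produce the maximal bound with only polylogarithmic losses. For $E_\lambda^{\mathfrak{M}}$, whose multiplier is concentrated near rationals $\bfa/\bfq$ with $\bfq \le Q$ and locally mimics a dilated continuous kernel, I would appeal to the Magyar--Stein--Wainger transference of Stein's continuous $L^2$ spherical maximal theorem (valid since $n \ge n_1(k) \ge 3$) to furnish the underlying uniform maximal bound, with the Siegel--Walfisz approximation error providing the $(\log \Lambda)^{-B}$ decay.

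The main obstacle is precisely this last step: because the Waring--Goldbach minor arc estimates currently save only in logarithms, one cannot afford the $\Lambda^{1/2}$ loss of a naive square function and must instead extract orthogonality from the arithmetic periodicity (encoded by the denominators $\bfq$) and the analytic smoothness (encoded by the cutoff $\psi_{N/Q}$) of $\widehat{E_\lambda}$ across $\lambda \in [\Lambda, 2\Lambda]$.
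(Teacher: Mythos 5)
Your circle-method scaffolding matches the paper's: the integral representation of $R(\lambda)\what(\bm\xi)$ as $\int_{\T}\prod_{j}S_N(\theta,\xi_j)\,e(-\lambda\theta)\,d\theta$, the dissection into major and minor arcs with $Q=(\log N)^C$, Harman-type minor-arc bounds, Siegel--Walfisz approximations on the major arcs, and the identification of the continuous factor as $N^{n-k}\widetilde{d\sigma_{\lambda_0}}$. You also correctly diagnose the key obstacle: a pointwise multiplier bound $\lesssim(\log\Lambda)^{-B'}$ together with a na\"ive square function or triangle inequality cannot survive summation over the $\sim\Lambda$ values of $\lambda$.

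However, your proposed repair for the maximal inequality is the genuine gap. You invoke a ``Rademacher--Menshov dyadic maximal argument'' for the minor-arc error and MSW transference of Stein's spherical theorem for the major-arc error. Rademacher--Menshov requires orthogonality of the family $\{E_\lambda f\}_{\lambda\in[\Lambda,2\Lambda]}$; nothing in your setup supplies that orthogonality, since the multipliers $\widehat{E_\lambda}$ are all supported on essentially the whole torus, so the argument does not close. And the MSW transference handles the \emph{main} term's continuous piece, not the \emph{error} term; the Siegel--Walfisz $O(N(QR)^{-10})$ error alone gives no mechanism for controlling a supremum over $\lambda$. The paper's actual solution is far more elementary and is what you're missing: a general lemma (Lemma~\ref{lem:L2 bound}) observing that when the multiplier has the form $\widehat{m_\lambda}(\bm\xi)=\int_X K(\theta;\bm\xi)\,e(\Phi(\lambda,\theta))\,d\mu(\theta)$ with a $\lambda$-independent kernel $K$ and a unimodular $\lambda$-dependent phase, one can pull the absolute value inside the $\theta$-integral, obtaining a pointwise bound independent of $\lambda$; Minkowski and Plancherel then yield $\|\sup_\lambda|T_\lambda|\|_{\ell^2\to\ell^2}\le\int_X\sup_{\bm\xi}|K(\theta;\bm\xi)|\,d\mu(\theta)$ with no loss whatsoever in $\Lambda$. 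Every piece of the error term has exactly this structure, with $\Phi(\lambda,\theta)=-\lambda\theta$. This reduces \eqref{eq:dyadic_error_bound} to showing $\int_{\mathfrak m}\sup_{\bm\xi}|F(\theta;\bm\xi)|\,d\theta\lesssim N^{n-k}(\log N)^{-B}$, which is then achieved by the ``hybrid sup and mean value bound'': one factor $S_N(\theta,\xi_{j})$ gets the pointwise Harman bound $N Q^{-\gamma_k}L^{c_k}$, and the remaining $n-1$ factors are controlled via the nonstandard mean value $\int_\T\sup_\xi|S_N(\theta,\xi)|^{2s}\,d\theta\lesssim N^{2s-k}L^{2s+3}$ (Lemma~\ref{lem:maxBourgain}, which is the place the dimension hypothesis $n\ge n_1(k)$ actually enters, not the minor-arc sup bound as your writeup suggests). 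Your sketch also treats the sup over $\bm\xi$ as a pointwise Fourier estimate, rather than taking the sup inside the $\theta$-integral before integrating; without that reordering, the mean-value input cannot be brought to bear on the maximal inequality.
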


Note that \eqref{eq:dyadic_error_bound} implies that
\begin{equation}\label{eq:sup_error_bound}
\left\| \widehat{E_\lambda} \right\|_{L^\infty(\mathbb T^n)} \lesssim (\log \la)^{-B}. 
\end{equation}
We remark that the proof of Theorem \ref{thm:approximation_formula} allows us to establish \eqref{eq:sup_error_bound} in a slightly wider range of dimension $\dimension$ than the theorem does for the stronger bound \eqref{eq:dyadic_error_bound}. Namely, if $2m$ is any even integer such that one can apply the circle method to establish the asymptotic formula in Waring's problem for $2m$ $k$th powers, then \eqref{eq:sup_error_bound} holds for $n \ge 2m+1$. In particular, using recent advances by Bourgain~\cite{Bourgain_hua} and Wooley \cite{Wooley_asymptotic}, we obtain \eqref{eq:sup_error_bound} for $n \ge n_0(k)$, where $n_0(k) = 2^k+1$ when $k = 2, 3$ or $4$, and
\[ n_0(k) = k^2 + 3 - \max_{1 \le j \le k-1} \left\lceil \frac {kj - \min(2^j,j^2+j)}{k-j+1} \right\rceil \]
when $k \ge 5$. These observations are useful in our next result, which describes the decay of $\widehat{\omega_\la}$ at irrational frequencies.

\begin{thm}\label{thm:decayatirrationalfrequencies}
Let $\degree \geq 2$ and $\dimension \ge \dimension_0(\degree)$. If $\bm\xi \not\in \mathbb{Q}^\dimension$, then $\what(\bm\xi)\to0$ as $\lambda \to \infty$ along $\Gammakn$. 
\end{thm}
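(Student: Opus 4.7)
The plan is to deduce the decay from the Approximation Formula together with absolute convergence of the singular series. Since the pointwise bound \eqref{eq:sup_error_bound} is available in the full range $\dimension \ge \dimension_0(\degree)$, we write $\what(\bm\xi) = M_\la(\bm\xi) + \widehat{E_\la}(\bm\xi)$, where $M_\la$ is the main term in \eqref{eq:approximation_formula}. The bound \eqref{eq:sup_error_bound} gives $\widehat{E_\la}(\bm\xi) = o_{\la\to\infty}(1)$ pointwise, so it remains to show $M_\la(\bm\xi) \to 0$ whenever $\bm\xi \not\in \QQ^n$.

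I would split $M_\la = M_\la^{< Q_0} + M_\la^{\ge Q_0}$ at a threshold $Q_0 \ge 1$ according to whether $\|\bfq\|_\infty \le Q_0$ or not. For the head $M_\la^{< Q_0}$, the support of $\psi_{N/Q}(\bfq\bm\xi - \bfa)$ forces $|q_i\xi_i - a_i| \lesssim Q/N = (\log N)^C/N$ in every coordinate. Because $\bm\xi \not\in \QQ^n$, some coordinate $\xi_{i_0}$ is irrational, and hence
\[ \eta := \min_{1 \le q \le Q_0} \|q\xi_{i_0}\| > 0. \]
Once $\la$ is large enough that $(\log N)^C/N < \eta$, every bump factor in $M_\la^{< Q_0}$ vanishes, and therefore $M_\la^{< Q_0}(\bm\xi) = 0$.

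For the tail $M_\la^{\ge Q_0}$, I would combine classical Weyl-type bounds on the exponential sums $g(a,q;b,r)$ over reduced residues modulo $[q,r]$ with the uniform estimates $|\widetilde{d\sigma_{\la_0}}| \lesssim 1$, $|\psi_{N/Q}| \lesssim 1$, and the normalization $N^{\dimension-\degree}/R(\la) \lesssim 1/\mathfrak{S}_{\dimension,\degree}(\la) \lesssim 1$ (valid for $\la \in \Gammakn$), to produce a decay estimate of the form $|G_\la(\bfa,\bfq)| \lesssim_\eps \prod_i q_i^{-1-\de+\eps}$ for some $\de > 0$. This yields
\[ |M_\la^{\ge Q_0}(\bm\xi)| \lesssim \sum_{\|\bfq\|_\infty > Q_0}\,\prod_{i=1}^\dimension q_i^{-1-\de+\eps}, \]
which is $o_{Q_0 \to \infty}(1)$ uniformly in $\la$ and $\bm\xi$. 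The theorem then follows from a standard $\eps/3$ argument: given $\eps > 0$, fix $Q_0$ so that $|M_\la^{\ge Q_0}| < \eps$ uniformly, then send $\la \to \infty$ so that both $M_\la^{< Q_0}$ vanishes and $\|\widehat{E_\la}\|_\infty < \eps$. The main technical obstacle is producing the decay estimate for $G_\la(\bfa,\bfq)$ throughout the dimension range $\dimension \ge \dimension_0(\degree)$; this should follow from Hua-type moment bounds combined with multiplicative estimates for $g$, though the bookkeeping involving the lowest common multiple $[q,r]$ will require care.
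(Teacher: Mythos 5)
Your overall route---invoke the pointwise error bound \eqref{eq:sup_error_bound}, which does hold in the full range $n\ge n_0(k)$, and then kill the main term by splitting the sum over $\bfq$---is sound in outline, and the head estimate is correct: for $\|\bfq\|_\infty\le Q_0$ and an irrational coordinate $\xi_{i_0}$, the quantity $\eta:=\min_{1\le q\le Q_0}\|q\xi_{i_0}\|$ is positive, and once $2Q/N<\eta$ the condition $\psi_{N/Q}(q_{i_0}\xi_{i_0}-a_{i_0})>0$ is vacuous. The tail estimate, however, has a genuine gap. The claimed bound $|G_\la(\bfa,\bfq)|\lesssim_\eps\prod_iq_i^{-1-\de+\eps}$ cannot hold: take $q_1=\dots=q_n=p$ prime and look at the single term $q=p$ in the outer sum defining $G_\la$; Shparlinski's bound gives a contribution of size roughly $\varphi(p)\,p^{n(-1/2+\eps)}\asymp p^{1-n/2+n\eps}$, which is far larger than $p^{-n(1+\de)}$ for every $n\ge1$. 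The structural obstacle is that $[q,q_i]^{-1/2+\eps}$ has no decay in $q_i$ when $q_i\mid q$. The best one can extract this way is a decay in a \emph{single} coordinate: for $n\ge5$, fix $j$, bound $[q,q_j]^{-1/2+\eps}\le q^{-1/2+\eps}q_j^{-1/2+\eps}(q,q_j)^{1/2-\eps}$ and the remaining factors by $q^{-1/2+\eps}$, and sum over $q$ grouping by the divisor $d=(q,q_j)$; this yields $|G_\la(\bfa,\bfq)|\lesssim_\eps(\max_iq_i)^{-1/2+\eps}$. With that (correct) bound, $\sum_{\|\bfq\|_\infty>Q_0}$ diverges, so the triangle inequality in your final display is unavailable.

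The missing ingredient is the disjointness of the bump supports, which the paper records at the start of its proof: for $\la$ large, at most one reduced rational $\bfa/\bfq$ with $1\le\bfq\le Q$, $\bfa\in\mathbb U_\bfq$ satisfies $\psi_{N/Q}(\bfq\bm\xi-\bfa)>0$ (two such points would force $N\lesssim Q^2$). So the main term is a single term, not a sum, and your argument then closes: either that unique $\bfq$ has $\|\bfq\|_\infty\le Q_0$ (killed by your head estimate for $\la$ large), or some $q_j>Q_0$, in which case $|G_\la(\bfa,\bfq)|\lesssim_\eps Q_0^{-1/2+\eps}$. Once repaired this way, your route is in fact slightly leaner than the paper's: the paper only extracts decay in $q_1$ (the chosen irrational coordinate), and therefore needs both the decay of $\widetilde{d\sigma_1}$ to dispose of the regime $q_1\le\de^{-1}$, $N|\xi_1-a_1/q_1|>\de^{-1}$, and a continued-fraction argument to rule out \eqref{eq:6.2}; your head estimate rules out all $q_{i_0}\le Q_0$ in one stroke without either ingredient.
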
 

Let $r(\lambda)$ denote the number of prime points on the $\degree$-sphere \eqref{eq1.1}. It follows readily from Theorem \ref{thm:decayatirrationalfrequencies} that, when $\bm\xi \not\in \mathbb{Q}^\dimension$, one has 
\begin{equation}\label{WeylCriterion}
\lim_{\substack{\lambda \to\infty \\ \lambda \in \Gammakn}} \frac{1}{r(\lambda)} \sum_{\form(\mathbf p)=\lambda} e(\mathbf p \cdot \bm\xi) = 0.
\end{equation}  
This gives a pair of interesting corollaries. The first is obtained by noting that \eqref{WeylCriterion} is precisely the Weyl criterion for uniform distribution on a torus. 
				
\begin{cor}
Let $\degree \geq 2$, $\dimension \ge \dimension_0(\degree)$, and  $\bm\alpha \in (\R\setminus\mathbb{Q})^\dimension$. The sets 
\[ \{(\alpha_1p_1, \dots, \alpha_\dimension p_\dimension) : \form(\mathbf p) = \la \} \]
become uniformly distributed with respect to the Lebesgue measure on the $\dimension$-dimensional torus $\T^\dimension $ as $\lambda \to\infty$ along  $\Gammakn$.
\end{cor}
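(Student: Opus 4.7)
The plan is to apply Weyl's equidistribution criterion directly, using \eqref{WeylCriterion} as input. Recall that a sequence of finite (multi)sets $\{S_\la\}_\la$ in $\T^\dimension$ is uniformly distributed with respect to Lebesgue measure if and only if, for every nonzero $\mathbf m \in \Z^\dimension$,
\[ \lim_{\la \to \infty} \frac{1}{|S_\la|} \sum_{\mathbf t \in S_\la} e(\mathbf m \cdot \mathbf t) = 0. \]
I would take $S_\la = \{(\alpha_1 p_1, \dots, \alpha_\dimension p_\dimension) : \form(\mathbf p)=\la\}$ (so that $|S_\la| = r(\la)$ when we view the set as a multiset indexed by $\mathbf p$) and verify this criterion for each nonzero $\mathbf m \in \Z^\dimension$.

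Fix such an $\mathbf m$, and let $\bm\xi := (m_1\alpha_1,\dots,m_\dimension\alpha_\dimension) \in \R^\dimension$. The key observation is that $\bm\xi \notin \QQ^\dimension$: since at least one coordinate $m_i$ is nonzero and $\alpha_i \in \R \setminus \QQ$, the entry $m_i\alpha_i$ is irrational. Then
\[ \frac{1}{r(\la)} \sum_{\form(\mathbf p) = \la} e\bigl(\mathbf m \cdot (\alpha_1 p_1, \dots, \alpha_\dimension p_\dimension)\bigr) = \frac{1}{r(\la)} \sum_{\form(\mathbf p) = \la} e(\mathbf p \cdot \bm\xi), \]
and the right-hand side tends to $0$ as $\la \to \infty$ along $\Gammakn$ by \eqref{WeylCriterion}. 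Since $\mathbf m$ was arbitrary, Weyl's criterion delivers the claimed uniform distribution.

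There is essentially no obstacle here: all of the analytic work sits inside Theorem~\ref{thm:decayatirrationalfrequencies} and the transition to the unweighted averages in \eqref{WeylCriterion}. The only point that needs any care is the elementary observation that an integer linear combination of irrationals with at least one nonzero coefficient produces a vector with at least one irrational coordinate, hence lies outside $\QQ^\dimension$; this is what allows \eqref{WeylCriterion} to be applied for every nonzero frequency $\mathbf m$.
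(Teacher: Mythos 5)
Your proof is correct and matches the paper's approach: the paper likewise derives the corollary directly by observing that \eqref{WeylCriterion} is exactly Weyl's criterion for uniform distribution, and the only additional step is the elementary one you spell out — that $(m_1\alpha_1,\dots,m_n\alpha_n)$ has at least one irrational coordinate whenever $\mathbf m \neq \mathbf 0$ and each $\alpha_i$ is irrational. Your aside that the multiset indexed by $\mathbf p$ is the natural object to feed into the criterion is also apt (and in fact the map $\mathbf p \mapsto (\alpha_i p_i)_i \bmod 1$ is injective by irrationality, so the set and multiset interpretations coincide).
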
 
					
Our second corollary is an $L^2$-convergence result regarding certain ergodic averages; as in Section~4 of \cite{Magyar:ergodic}, where the analogous `integral' result is proven, this follows from the spectral theorem for unitary operators. To state this corollary, let $(X,\mu)$ denote a probability space with a commuting family of $\dimension$ invertible measure preserving transformations $T=(T_1,...,T_\dimension)$. Such a family is referred to as a \textit{fully ergodic family of transformations} if the hypothesis 
\[ T^s_1f=T^s_2f= \dots =T^s_nf=f,\]
where $s \in \N$ and $f\in L^2(X,\mu)$, implies that $f$ is constant. Here $Tf$ should be interpreted as $f\circ T$. As observed in \cite{Magyar:ergodic}, the notion of full ergodicity is  actually a condition on the joint spectrum of the $T_i$. More precisely, full ergodicity implies that given $f\in L^2(X,\mu)$, if $T_if=e(\lambda_i)f$ holds with $\lambda_i$ rational for all $i\leq n$, then $f$ is constant almost everywhere.  For a function $f: X \to \C$, $\lambda \in \Gammakn$ and $x \in X$, define the Waring--Goldbach ergodic averages on $X$ with respect to $T$  by 
\begin{equation}\label{ergavg}
\mathcal{A}_\lambda f(x) := \frac{1}{R(\lambda)} \sum_{\form({\bf p})=\lambda} (\log{\bf p}) f(T^{\bf p}x), 
\end{equation}
where $T^{\bf m}x := T_1^{m_1} \cdots T_\dimension^{m_\dimension} x$ for ${\bf m} = (m_1, \dots, m_\dimension) \in \Z^\dimension$. 

\begin{cor}[$L^2$-mean ergodic theorem]
Let $k \ge 2$,  $\dimension \ge \dimension_0(\degree)$, and let $(X,\mu)$ be a probability space with a fully ergodic family of transformations $T=(T_1,...,T_n)$. Then for all \( f \in L^2(X,\mu) \), the ergodic averages of $f$ defined by \eqref{ergavg} converge in $L^2(X, \mu)$ to the space average of $f$; that is, one has that 
\[ \lim_{\substack{\lambda \to\infty \\ \lambda \in \Gammakn}} \mathcal{A}_\la f =\int_X f \, d\mu \]
in $L^2(X, \mu)$. 
\end{cor}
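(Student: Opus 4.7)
The plan is to invoke the spectral theorem for the commuting family of unitary operators $T_1,\ldots,T_\dimension$ acting on $L^2(X,\mu)$ and to combine it with the pointwise decay furnished by Theorem~\ref{thm:decayatirrationalfrequencies}. First, I would rewrite the average as
\[
\mathcal{A}_\lambda f \;=\; \sum_{\mathbf m \in \Z^\dimension} \omega_\lambda(\mathbf m)\, T^{\mathbf m} f, \qquad T^{\mathbf m} := T_1^{m_1}\cdots T_\dimension^{m_\dimension},
\]
and apply the joint spectral theorem for commuting unitaries, which supplies a projection-valued measure $E$ on $\T^\dimension$ with $T^{\mathbf m} = \int_{\T^\dimension} e(\mathbf m\cdot\bm\xi)\, dE(\bm\xi)$. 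Exchanging sum and integral then yields the operator identity $\mathcal{A}_\lambda = \int_{\T^\dimension} \what(\bm\xi)\, dE(\bm\xi)$.

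Next, split $f = c + f_0$ with $c := \int_X f\, d\mu$ and $f_0 := f - c$. Since $\omega_\lambda$ is a probability measure on $\Z^\dimension$, the average preserves constants, $\mathcal{A}_\lambda c = c$, so
\[
\|\mathcal{A}_\lambda f - c\|_{L^2(X,\mu)}^2 \;=\; \|\mathcal{A}_\lambda f_0\|_{L^2(X,\mu)}^2 \;=\; \int_{\T^\dimension} |\what(\bm\xi)|^2\, d\mu_{f_0}(\bm\xi),
\]
where $\mu_{f_0}(A) := \|E(A) f_0\|^2_{L^2}$ is a finite positive measure on $\T^\dimension$ of total mass $\|f_0\|_{L^2}^2$.

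Under the natural reading of the joint-spectrum hypothesis — that on the orthocomplement of the joint invariants no rational frequency occurs — the constant piece $c$ absorbs all of the mass of $E$ at rational points, so $\mu_{f_0}$ is supported on $\T^\dimension \setminus \QQ^\dimension$. Theorem~\ref{thm:decayatirrationalfrequencies} gives the pointwise decay $\what(\bm\xi) \to 0$ for every $\bm\xi \not\in \QQ^\dimension$ as $\lambda \to \infty$ along $\Gammakn$, and the triangle inequality yields the uniform bound $|\what(\bm\xi)| \le 1$. The dominated convergence theorem, applied against the finite measure $\mu_{f_0}$, then drives the last integral to $0$, producing the claimed $L^2$ limit.

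The argument is essentially bookkeeping around the spectral theorem; the only conceptual care required is in interpreting the spectral hypothesis so as to guarantee $\mu_{f_0}(\QQ^\dimension) = 0$. All genuine analytic content has been deposited in Theorem~\ref{thm:decayatirrationalfrequencies}, which is where the circle-method work on the Waring--Goldbach problem enters the argument.
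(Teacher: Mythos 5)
Your proof is correct and is essentially the elaboration the paper intends by its one-line reference to Magyar's spectral-theorem argument: split off the constant, apply the joint spectral theorem to reduce to $\int_{\T^n}|\what|^2\,d\mu_{f_0}$, use the hypothesis to ensure $\mu_{f_0}(\QQ^n)=0$, and conclude by dominated convergence from Theorem~\ref{thm:decayatirrationalfrequencies}. Your explicit flag about interpreting the rational-spectrum hypothesis so that $E(\{\mathbf 0\})f_0=0$ (i.e.\ constants exhaust the joint invariants and carry the spectral mass at $\mathbf 0$) is exactly the care the paper itself takes implicitly in the proof of Proposition~\ref{proposition:ergodic_main_terms_small}, where the point mass at $\mathbf 0$ is handled by the mean-zero reduction and the point masses at nonzero rationals by the spectral hypothesis.
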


\begin{remark}
As observed in Section 3 of \cite{Magyar:ergodic}, this result does not hold in general if one omits the full ergodicity condition.
\end{remark}

To prove the ergodic theorems, we consider the convolution operator $A_\lambda$ with Fourier multiplier $\widehat{\omega_\lambda}$: for functions $f: \Z^\dimension \to \C$, we write
\begin{equation}\label{primeavg} 
A_\lambda f := \omega_\lambda \star f. 
\end{equation}
This is our discrete spherical averaging operator along the primes. We will use the \approximationformula\ to prove a maximal theorem, stated below. In the remaining theorems, define $\dimension_2(\degree) := \degree^2(\degree-1)+1$ for $\degree \geq 7$ and $\dimension_2(\degree) := \degree 2^{\degree-1}+1$ for $2 \leq \degree \leq 6$; also define $p_{\degree,\dimension} := 1+\frac{\dimension_2(\degree)}{2\dimension-\dimension_2(\degree)} = \frac{2\dimension}{2\dimension-\dimension_2(\degree)}$. 

\begin{thm}\label{mainmaxfunction}
Let $\degree \geq 2$ and $\dimension \geq \max\{ \dimension_1(\degree), \dimension_2(\degree) \}$. The maximal function given by 
\begin{equation}
A_* f := \sup_{\lambda \in \Gammakn} |A_\lambda f| 
\end{equation} 
is bounded on \(\ell^p(\Z^\dimension)\) for all $p > p_{\degree,\dimension}$. 
\end{thm}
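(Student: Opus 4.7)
The plan is to use Theorem~\ref{thm:approximation_formula} to split the convolution operator as $A_\lambda = M_\lambda + E_\lambda$, where $M_\lambda$ has Fourier multiplier equal to the sum on the right-hand side of \eqref{eq:approximation_formula} and $E_\lambda$ has multiplier $\widehat{E_\lambda}$. The hypothesis $n \ge n_1(k)$ ensures that the Approximation Formula applies, while $n \ge n_2(k)$ governs the $\ell^p$ range via the underlying continuous $k$-spherical maximal function. The theorem then reduces to showing uniform $\ell^p$ bounds for $\sup_\lambda |M_\lambda f|$ and $\sup_\lambda |E_\lambda f|$ in the required range of $p$.

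For the error term, the dyadic bound \eqref{eq:dyadic_error_bound} combined with the summability $\sum_j j^{-B} < \infty$ (for $B > 1$ fixed large) yields $\|\sup_\lambda |E_\lambda f|\|_{\ell^2} \lesssim \|f\|_{\ell^2}$. Interpolating the per-block dyadic $\ell^2$ bound $(\log\Lambda)^{-B}$ against the trivial uniform $\ell^\infty \to \ell^\infty$ bound of $O(1)$ (which holds since the kernels of both $A_\lambda$ and $M_\lambda$ have $\ell^1$-norm $O(1)$), and summing the results over dyadic scales $\Lambda$, gives $\ell^p$ boundedness of $\sup_\lambda |E_\lambda|$ for all $p \ge 2$. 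The complementary range $p_{k,n} < p < 2$ is achieved by recognizing $\widehat{E_\lambda}$ as a multiplier whose significant contributions come from a sparse union of small neighborhoods of rationals $\bfa/\bfq$ with denominators polylog in $\lambda$; Ionescu--Wainger multiplier theory then gives $\ell^p$ control uniformly for $p > 1$, with the $L^\infty$ decay from \eqref{eq:sup_error_bound} providing summability across the dyadic scales in $\lambda$.

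For the main term, I would apply the Magyar--Stein--Wainger transference (sampling) principle. Once $Q = (\log N)^C$ is fixed, the bumps $\psi_{N/Q}(\bfq\bm\xi - \bfa)$ over distinct pairs $(\bfa,\bfq)$ with $1 \le \bfq \le Q$ and $\bfa \in \mathbb U_{\bfq}$ have pairwise disjoint supports in $\T^n$. Thus the main-term symbol is the disjoint superposition, over these pairs, of an arithmetic weight $G_\lambda(\bfa,\bfq)$ times a bump times the continuous symbol $\widetilde{d\sigma_{\lambda_0}}(N(\bm\xi - \bfa/\bfq))$. The sampling lemma identifies the corresponding $\Z^n$ convolution operator as the composition of a periodic arithmetic multiplier (with symbol $\sum_{\bfq,\bfa} G_\lambda(\bfa,\bfq)\psi_{N/Q}(\bfq\bm\xi - \bfa)$) and a lattice discretization of the continuous $k$-spherical averaging operator $f \mapsto f * d\sigma_R$ on $\R^n$, with $R$ ranging over dyadic scales. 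The $\ell^p(\Z^n)$ bound on $\sup_\lambda|M_\lambda|$ thereby reduces to two ingredients: first, the $L^p(\R^n)$ boundedness of the continuous $k$-spherical maximal function $\sup_R|f * d\sigma_R|$ over the surfaces $\{\form(\mathbf y) = R^k\}$, which holds for $p > p_{k,n} = 2n/(2n - n_2(k))$ when $n \ge n_2(k)$ by the Stein--Magyar spherical maximal theorem for algebraic surfaces (the exponent $n_2(k)$ being that arising from the best available Fourier decay of $d\sigma$, governed by Hua's inequality for $k \le 6$ and Wooley's efficient congruencing for $k \ge 7$); and second, uniform $\ell^p$ bounds on the arithmetic multiplier, obtained from Hua's multiplicative Gauss sum estimate $|g(a,q;b,r)| \lesssim [q,r]^{-1/k+\eps}$ together with Ionescu--Wainger multiplier theory.

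The principal obstacle is that $\bfq = (q_1,\dots,q_n)$ is vector-valued, reflecting the distinct arithmetic progressions to which each prime coordinate of $\mathbf p$ is restricted, which puts $G_\lambda(\bfa,\bfq)$ outside the scalar-modulus framework of the classical Magyar--Stein--Wainger multipliers. The resolution I have in mind is to exploit the multiplicative decomposition of $g(a,q;b,r)$ over primes dividing $\lcm(q,r)$, which factorizes the arithmetic symbol (after dyadic decomposition in each coordinate $q_i$) into a tensor-like product of prime-modulus pieces whose $\ell^p$ norms can be combined. Hua's $[q,r]^{-1/k+\eps}$ decay then absorbs the product, yielding the desired uniform $\ell^p$ bound on the arithmetic multiplier. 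Combining this with the continuous $k$-spherical maximal theorem gives $\sup_\lambda|M_\lambda|$ bounded on $\ell^p$ for $p > p_{k,n}$, and together with the error analysis completes the proof of Theorem~\ref{mainmaxfunction}.
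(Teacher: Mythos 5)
Your broad plan---decompose $A_\lambda = M_\lambda + E_\lambda$ via the Approximation Formula, bound each supremum separately, and combine---matches the paper's strategy. But two of the key steps go wrong in ways that would not survive execution.

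\textbf{The error term for $p<2$.} You propose handling $p_{\degree,\dimension}<p<2$ by ``recognizing $\widehat{E_\lambda}$ as a multiplier whose significant contributions come from a sparse union of small neighborhoods of rationals $\bfa/\bfq$ with denominators polylog in $\lambda$'' and then applying Ionescu--Wainger multiplier theory. This is backwards: the multiplier that is supported on small neighborhoods of low-height rationals is the \emph{main} term, and $\widehat{E_\lambda}$ is precisely what is left over. In particular $\widehat{E_\lambda}$ carries the full minor-arc contribution to the circle-method integral, which lives \emph{away} from low-height rationals, so there is no Ionescu--Wainger structure to exploit and no better-than-trivial $\ell^p$ bound for small $p$ available by that route. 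The paper circumvents this entirely by comparison with the integral Magyar--Stein--Wainger operator: using the pointwise inequality $|A_\lambda f| \lesssim (\log\lambda)^{\dimension} B_\lambda |f|$ (where $B_\lambda$ is the unweighted integer spherical average) together with the triangle inequality $|E_\lambda f| \lesssim |M_\lambda f| + (\log\lambda)^{\dimension} B_\lambda|f|$, one obtains the dyadic bound $\|\sup_{\lambda\eqsim 2^j}|E_\lambda f|\|_{\ell^{p_0}} \lesssim j^{\dimension}\|f\|_{\ell^{p_0}}$ whenever $B_*$ and $M_*$ are both bounded on $\ell^{p_0}$. Interpolating this \emph{polynomially-blowing-up} $\ell^{p_0}$ bound against the $(\log\Lambda)^{-B}$ decay on $\ell^2$ from \eqref{eq:dyadic_error_bound} kills the logarithmic loss, and summing over $j$ gives $\ell^p$-boundedness of $\sup_\lambda|E_\lambda|$ for $p_0<p<2$. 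This is the engine of Theorem~\ref{thm:integral_comparison}, and it, not Ionescu--Wainger, is what bridges the gap below $p=2$. (Your remark that the $\ell^\infty\to\ell^\infty$ norm of $M_\lambda$ is $O(1)$ is also unjustified---a polylogarithmic blow-up is all one should expect, which is fine, but it is worth noticing that even your $p\ge 2$ argument implicitly needs the same ``polynomial-in-$\log$ times $\ell^2$ decay'' interpolation.)

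\textbf{Where $p_{\degree,\dimension}$ comes from.} You attribute the range $p>p_{\degree,\dimension}$ to the continuous $\degree$-spherical maximal function, claiming it is bounded exactly for $p>p_{\degree,\dimension}$ when $\dimension\ge\dimension_2(\degree)$. In the paper, the continuous object that arises (Lemma~\ref{lemma:analytic_part}) is the \emph{mollified} maximal operator $T_*f = \sup_\lambda |f\star(\widecheck{\psi_{\lambda^{1/\degree}(\log\lambda)^{-C}}}\star d\sigma_\lambda)|$, and the bump $\psi_{N/Q}$ narrows its frequency support so much that $T_*$ is bounded on $L^p(\R^\dimension)$ for all $p>1$ (it is essentially a smooth Hardy--Littlewood operator). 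Consequently the main-term maximal function $M_*$ is bounded for the wide range $p>\frac{\dimension}{\dimension-2}$ (Theorem~\ref{thm:main_term}), and the binding constraint $p>p_{\degree,\dimension}$ enters only through the need, in the error-term comparison, for the \emph{integer} discrete $\degree$-spherical maximal function $B_*$ to be bounded---which is imported from \cite{MSW} (for $\degree=2$) and \cite{Hughes_restricted} (for $\degree\ge 3$). Your architecture misplaces this dependence.

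A smaller point: the relevant Gauss-sum bound is Shparlinski's $|g(a,q;b,r)|\lesssim [q,r]^{-1/2+\eps}$ (Lemma~\ref{lem:Shparlinski}, leading to \eqref{eq2.3}), not $[q,r]^{-1/\degree+\eps}$; the square-root saving is exactly what makes the $q^{\eps-\dimension/2}$ factor in Lemma~\ref{lemma:main_term:ell2} strong enough to give the $p>\frac{\dimension}{\dimension-2}$ range for $M_*$ uniformly in $\degree$.
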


\begin{remark}
In sufficiently large dimensions, the maximal function $A_*$ is unbounded on \(\ell^p(\Z^n)\) for \(p < \frac{\dimension}{\dimension-\degree}\). This can readily be seen by testing the maximal function on a delta function at the origin and using the asymptotic for $R(\lambda)$ as $\lambda \to \infty$ in $\Gammakn$. With this in mind, we conjecture that \(A_*\) should be bounded on \(\ell^p(\Z^\dimension)\) for all \(p>\frac{n}{n-k}\) in sufficiently large dimensions; this is the same conjectured range of $p$ as for the integral maximal function. We refer the reader to \cite{Hughes_Vinogradov} for more information on the conjectured range of $\ell^p(\Z^\dimension)$-boundedness for the integral maximal function.
\end{remark}

\begin{remark}
In the quadratic case, the Magyar--Stein--Wainger theorem holds for $\dimension \geq 5$ whereas ours only holds for $\dimension \geq 7$. (Theorem~\ref{mainmaxfunction} does match the Magyar--Stein--Wainger theorem in the range of $p$, and both ranges are sharp.) 
An aspect of this work is that for improvements to the value of dimension and $p_{\degree,\dimension}$ in the integer setting automatically translate to corresponding improvements to $\dimension_2(\degree)$ and $p_{\degree,\dimension}$ in our setting. We used our techniques to improve the range of dimension and $p_{\degree,\dimension}$ in the integer setting when the degree $\degree$ is sufficiently large in a forthcoming paper \cite{ACHK}. 
\end{remark}

We take this moment to describe the proof of our maximal theorem and to compare it with previous works. 
Throughout the paper we follow the paradigms of \cite{Bourgain_maximal_ergodic} as embellished in the integral version of our averages in  \cite{MSW} and \cite{Magyar:ergodic}. In particular we assume that the reader is familiar with the transference technology of \cite{MSW}. As in \cite{MSW}, our maximal theorem will exploit the {\approximationformula} which decomposes $\what = \widehat{M_{\lambda}}+\widehat{E_{\lambda}}$ into the sum of a main term and error term. We will use separate techniques to get good bounds on the suprema over $\lambda$ of both the main term and error term. As in all previous works, our decomposition requires a major arc/minor arc decomposition of the degree $k$ frequency variable. 
Unlike previous works we require an additional major arc/minor arc decomposition of the linear frequency variables. 
For the main term we will use estimates for relevant exponential sums and oscillatory integrals in addition to the transference results of \cite{MSW} to bound the main term. 
However, as already mentioned, the methods in previous works such as  \cite{MSW,Hughes_Vinogradov,Hughes_restricted} are insufficient to handle the error term from our circle method approximation in the \approximationformula. This is due to the logarithmic decay in \eqref{eq:dyadic_error_bound} as opposed to power savings that appeared in previous works. To overcome this obstacle, we introduce a \emph{hybrid sup and mean value bound} to control the relevant exponential sums on our set of minor arcs and consequently bound the error term in $\ell^2$; this is one of the novel aspects of our paper. From this, the known bounds for the integer case in \cite{MSW}, and the bounds we are able to proe for the main term on $\ell^p$, we are able to bound the analogue of the Magyar--Stein--Wainger discrete spherical maximal function along the primes.  

Following Magyar \cite{Magyar:proc} and Bourgain \cite{Bourgain_maximal_ergodic}, we will use our maximal theorem to prove the following pointwise ergodic theorem along the primes. 

\begin{thm}\label{thm:pointwise_ergodic}
Let $\degree \geq 2$, $\dimension \geq \max\{ \dimension_1(\degree), \dimension_2(\degree) \}$, and let \( (X,\mu) \) be a probability space with a fully ergodic family of transformations $T=(T_1,...,T_n)$. 
Then for all \( f \in L^2(X,\mu) \), the ergodic averages of $f$ defined by \eqref{ergavg} converge almost everywhere to the space average of $f$; that is, 
\begin{equation}
\lim_{\substack{\lambda \to\infty \\ \lambda \in \Gammakn}} \mathcal{A}_\lambda f = \int_X f \, d\mu 
\end{equation} 
\(\mu\)-almost everywhere. 
\end{thm}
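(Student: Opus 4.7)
The plan is to combine Calder\'on's transference principle with Banach's principle, following the template of \cite{Bourgain_maximal_ergodic, MSW, Magyar:ergodic}.

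First, I would invoke Calder\'on transference to lift the $\ell^p$ maximal bound of Theorem~\ref{mainmaxfunction} to an $L^p$ bound for $\mathcal A_*$ on $(X,\mu)$ for all $p > p_{\degree,\dimension}$. Under the hypothesis on $\dimension$ one has $p_{\degree,\dimension}<2$, and interpolating with the trivial estimate $\|\mathcal A_\la f\|_\infty \le \|f\|_\infty$ yields boundedness of $\mathcal A_*$ on $L^2(X,\mu)$. By the Banach principle the set $\mathcal C \subset L^2(X,\mu)$ of $f$ for which $\mathcal A_\la f$ converges $\mu$-a.e.\ is closed in $L^2$, so it suffices to exhibit a dense subspace of $L^2(X,\mu)$ on which a.e.\ convergence to $\int f\,d\mu$ holds.

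Second, I would split $\mathcal A_\la = \mathcal M_\la + \mathcal E_\la$ via the joint spectral calculus of $T_1,\dots,T_\dimension$, using the \approximationformula\ of Theorem~\ref{thm:approximation_formula}. The error piece is disposed of by transferring \eqref{eq:dyadic_error_bound}: one obtains
\[
\bigl\| \sup_{\Lambda \le \la \le 2\Lambda} |\mathcal E_\la f| \bigr\|_{L^2(X)} \lesssim (\log\Lambda)^{-B} \|f\|_{L^2(X)},
\]
and then choosing $B>1/2$ and summing over dyadic $\Lambda = 2^j$,
\[
\sum_{j \ge 1} \Bigl\| \sup_{2^j \le \la \le 2^{j+1}} |\mathcal E_\la f| \Bigr\|_{L^2(X)}^2 \lesssim \sum_{j \ge 1} j^{-2B} \|f\|_{L^2(X)}^2 < \infty,
\]
so that Chebyshev and Borel--Cantelli force $\mathcal E_\la f \to 0$ $\mu$-a.e.\ for every $f \in L^2(X,\mu)$.

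Third, for the main term $\mathcal M_\la$, the multiplier $\widehat{M_\la}$ is supported in $O(Q/N)$-neighborhoods of rationals $\bfa/\bfq$ with $1 \le \bfq \le Q = (\log N)^C$. The hypothesis that the joint spectrum of $T$ contains no rational points ensures that, restricted to $L^2(X,\mu)\ominus\mathbb{C}$, the joint spectral measure has no atoms at rationals. Together with the trivial action of $\mathcal A_\la$ on constants, one uses this spectral no-atom property, coupled with Gauss-sum decay in $G_\la(\bfa,\bfq)$ as $\bfq$ grows, to produce for every $\epsilon>0$ an approximant $f_\epsilon$ with $\|f-f_\epsilon\|_2 < \epsilon$ satisfying $\mathcal M_\la f_\epsilon \to 0$ $\mu$-a.e.; combined with Step~2 this gives a.e.\ convergence of $\mathcal A_\la f$ to $\int f\,d\mu$ on a dense subset, completing the proof via the Banach principle.

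The main obstacle is the construction in Step~3. Because $Q=(\log N)^C$ grows with $\la$, the support of $\widehat{M_\la}$ accumulates at \emph{every} rational as $\la\to\infty$, so one cannot take $f_\epsilon$ whose spectral measure merely avoids a fixed neighborhood of $\QQ^\dimension$. The remedy, parallel to the integer-case treatment in \cite{Magyar:ergodic}, is to split $\mathcal M_\la$ into a piece supported near rationals with small $\bfq$ (killed on $f_\epsilon$ once its spectrum avoids a fixed finite set of rational neighborhoods, which is allowed by the no-atom hypothesis) and a piece supported near rationals with large $\bfq$ (whose operator norm on $L^2$ is small by Gauss-sum savings), and to let both cutoffs tend to infinity in a coupled fashion.
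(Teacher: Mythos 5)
Your overall scaffold is the same as the paper's: Calder\'on transference plus the maximal bound of Theorem~\ref{mainmaxfunction} reduces matters to a dense subspace, the \approximationformula\ splits $\mathcal A_\lambda = \mathcal M_\lambda + \mathcal E_\lambda$, the error dies by transferring \eqref{eq:dyadic_error_bound}, and the main term is handled by combining Gauss-sum decay for large $\bfq$ with the rational-spectrum hypothesis for small $\bfq$. You also correctly flag the central difficulty (the localization scale $Q=(\log N)^C$ grows with $\lambda$, so the multiplier's support crowds every rational). So far this mirrors the actual proof, which is organized around a \emph{transferred tail oscillation inequality} on $X\times\Z^n$, with the small-$\bfq$ part isolated in Proposition~\ref{proposition:ergodic_main_terms_small} and proved by Plancherel, the joint spectral theorem, a Fej\'er kernel, and the assumption $\nu_f(\bfa/\bfq)=0$ (together with $\nu_f(0)=|\int f\,d\mu|^2=0$ for mean-zero $f$).

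The genuine gap is in Step~3 as you literally state it: for a \emph{fixed} approximant $f_\epsilon$ whose spectral measure avoids $\delta$-neighborhoods of the finitely many rationals with $\bfq \le Q_0$, you do \emph{not} get $\mathcal M_\lambda f_\epsilon \to 0$ $\mu$-a.e. Once $Q(\lambda)$ exceeds your fixed $Q_0$, the piece $\sum_{\bfq > Q_0}$ of the multiplier is nonzero, and while its operator norm is $\lesssim Q_0^{-\eta}$ uniformly in $\lambda$, it does not tend to zero pointwise; you only get $\bigl\|\limsup_\lambda |\mathcal M_\lambda f_\epsilon|\bigr\|_{L^2}\lesssim Q_0^{-\eta}\|f_\epsilon\|_2$. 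There is no single $f_\epsilon$ for which the limit is literally zero a.e., so the Banach-principle-via-dense-subspace phrasing has to be replaced by a quantitative $\limsup$ estimate: for each $\epsilon$, choose $Q_0 = Q_0(\epsilon)$ so the $\bfq > Q_0$ block is $O(\epsilon)$ in $L^2$, then choose $R=R(\epsilon,Q_0,f)$ large enough (using the no-atom hypothesis) so the $\bfq\le Q_0$ block applied to $f$ with $\lambda > R$ is also $O(\epsilon)$, and conclude $\bigl\|\limsup_\lambda|\mathcal A_\lambda f|\bigr\|_{L^2}\lesssim\epsilon$ for all $\epsilon$, hence $=0$. This is exactly the tail oscillation inequality the paper proves; your ``coupled cutoffs'' remark gestures at it, but the claim of literal a.e.\ convergence for each $f_\epsilon$ is false as written and needs to be replaced by this $\limsup$ argument.
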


Again, a standard argument (see for instance \cite{Wierdl}) implies the same result without the logarithmic weights. 

\begin{cor}
Suppose that \( (X,\mu) \) is a probability space with \(\dimension\) commuting measure-preserving operators \( T_1,\dots,T_n \) satisfying the conditions of Theorem \ref{thm:pointwise_ergodic}. Then, for all $f \in L^2(X,\mu)$, one has
\begin{equation}
\lim_{\substack{\lambda \to\infty \\ \lambda \in \Gammakn}} \frac{1}{r(\lambda)} \sum_{\form({\bf p}) = \lambda} f(T^{\bf p}x) =  \int_X f \, d\mu 
\end{equation}
\(\mu\)-almost everywhere. 
\end{cor}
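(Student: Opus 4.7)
The plan is to deduce the corollary from Theorem~\ref{thm:pointwise_ergodic} by a direct comparison of the unweighted averages
\[
\mathcal{B}_\lambda f(x) := \frac{1}{r(\lambda)} \sum_{\form(\mathbf{p})=\lambda} f(T^{\mathbf{p}} x)
\]
with the log-weighted averages $\mathcal A_\lambda f$ of \eqref{ergavg}. Write $L := k^{-1}\log\lambda$; since every prime $p_i$ in a tuple with $\form(\mathbf p)=\lambda$ satisfies $p_i\le\lambda^{1/k}$, one has $\log\mathbf p \le L^n$. For a parameter $\eta\in(0,1)$ set
\[
S_\eta(\lambda) := \bigl\{\,\mathbf p\in\theprimes^n : \form(\mathbf p)=\lambda \text{ and } p_i\ge\lambda^{(1-\eta)/k}\text{ for each } i\,\bigr\}.
\]
On $S_\eta(\lambda)$ each factor $\log p_i$ lies in $[(1-\eta)L,L]$, so $\log\mathbf p = L^n(1+O_n(\eta))$. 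Outside $S_\eta(\lambda)$ some coordinate $p_j$ is smaller than $\lambda^{(1-\eta)/k}$; fixing this prime and applying an upper bound sieve (or equivalently the Hardy--Littlewood upper bound) to the Waring--Goldbach equation in the remaining $n-1$ variables, and then summing over the admissible values of $p_j$, yields
\[
\#\bigl(\{\form(\mathbf p)=\lambda\}\setminus S_\eta(\lambda)\bigr)\lesssim \lambda^{-\eta/k}\,r(\lambda).
\]

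Combining these two observations with the asymptotic $r(\lambda)\sim L^{-n}R(\lambda)$, which follows from running the circle method behind \eqref{eq1.2} without the logarithmic weights (or directly from $R(\lambda)=L^n r(\lambda) + O_n(\eta L^n r(\lambda))$ after the $S_\eta$ split), one obtains
\[
\bigl|\mathcal B_\lambda f(x) - \mathcal A_\lambda f(x)\bigr| \le \bigl(C_n\eta + o(1)\bigr)\|f\|_\infty
\]
uniformly in $x\in X$ for every $f\in L^\infty(X,\mu)$. Since $\eta\in(0,1)$ is arbitrary, $\mathcal B_\lambda f - \mathcal A_\lambda f \to 0$ uniformly as $\lambda\to\infty$ in $\Gammakn$, and Theorem~\ref{thm:pointwise_ergodic} then yields $\mathcal B_\lambda f \to \int_X f\,d\mu$ $\mu$-almost everywhere whenever $f$ is bounded.

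To extend the conclusion to arbitrary $f\in L^2(X,\mu)$ one invokes the standard Banach principle: approximate $f$ in $L^2$-norm by a bounded function $g$, handle $g$ via the previous step, and dominate the tail $f-g$ using a maximal inequality of the form $\|\sup_\lambda|\mathcal B_\lambda h|\|_{L^2(X)}\lesssim\|h\|_{L^2(X)}$. Such a bound follows on $\ell^2(\Z^n)$ by repeating the proof of Theorem~\ref{mainmaxfunction} with the unweighted surface measure $r(\lambda)^{-1}\mathbf 1_{\{\form(\mathbf p)=\lambda\}}$ in place of $\omega_\lambda$ (the Approximation Formula of Theorem~\ref{thm:approximation_formula} persists, with the Vinogradov-type log-weighted prime sums replaced by their unweighted analogues through partial summation), and then transfers to $L^2(X,\mu)$ via Calder\'on's principle; alternatively, the $S_\eta$ decomposition dominates $\mathcal B_*f$ pointwise by $(1+O_n(\eta))A_*f$ up to an $\ell^2$-negligible remainder. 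The main technical obstacle is the sieve estimate for the complement of $S_\eta(\lambda)$: one needs the Waring--Goldbach problem in $n-1$ prime variables to admit an upper bound of the correct order $\lambda^{(n-1)/k-1}/\log^{n-1}\lambda$, which is available in the assumed range of $n$ via Hua's theorem applied to $n-1$ variables.
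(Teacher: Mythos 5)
The paper itself gives no detailed argument here: it cites a ``standard argument'' from Wierdl \cite{Wierdl}, so there is no in-text proof to compare against. Your $S_\eta$ decomposition is a clean replacement for Wierdl's Abel-summation step in the level-set setting, and it does the job for \emph{bounded} $f$: on $S_\eta(\lambda)$ the weight $\log\mathbf p$ is $(k^{-1}\log\lambda)^n(1+O_n(\eta))$, while the complement carries $O(\lambda^{-\eta/k}r(\lambda))$ solutions by fixing the small prime and bounding the Waring--Goldbach count in the remaining $n-1$ variables. Two small cautions on this step: (i) you invoke ``Hua's theorem applied to $n-1$ variables,'' but Hua proves an asymptotic only for $\lambda-p_j^k$ in the admissible progression; what you actually need is the \emph{upper} bound for all residues, which is available but should be cited as such (upper-bound circle method or sieve), and you should check explicitly that $n-1$ is still in the admissible range, which it is since $n-1\ge n_1(k)-1\ge n_0(k)$; (ii) the bound is stated for a fixed $\eta$, and taking $\eta\to 0$ is fine for bounded $f$.

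The gap is in the passage from bounded $f$ to general $f\in L^2(X,\mu)$. The Banach principle does require an $L^2(X)$ (or at least weak-$(2,2)$) bound for $\mathcal B_*:=\sup_\lambda|\mathcal B_\lambda|$, and neither of your two proposed routes establishes it. The second route --- ``the $S_\eta$ decomposition dominates $\mathcal B_*f$ pointwise by $(1+O_n(\eta))A_*f$ up to an $\ell^2$-negligible remainder'' --- does not close. The remainder is $r(\lambda)^{-1}\sum_{\mathbf p\in S_\eta^c(\lambda)}|f(T^{\mathbf p}x)|$; the cardinality bound $|S_\eta^c(\lambda)|\lesssim\lambda^{-\eta/k}r(\lambda)$ controls it for bounded $f$ but not for $L^2$ functions. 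The best per-$\lambda$ estimate is $\|r(\lambda)^{-1}\sum_{S_\eta^c(\lambda)}|f\circ T^{\mathbf p}|\|_{L^2(X)}\lesssim\lambda^{-\eta/k}\|f\|_{L^2}$, and summing this over $\lambda\in\Gammakn$ requires $2\eta/k>1$, i.e.\ $\eta>k/2\ge1$, which is impossible since $\eta\in(0,1)$; there is no maximal estimate for the $S_\eta^c$ piece in your proposal, so the remainder is not shown to be negligible. Your first route --- rederive the Approximation Formula for the unweighted measure $r(\lambda)^{-1}\mathbf 1_{\{\form(\mathbf p)=\lambda\}}$ via partial summation from $S_N$ to $T_N(\theta,\xi)=\sum_{p\le N}e(\theta p^k+\xi p)$ and rerun Theorem~\ref{mainmaxfunction} --- is the correct strategy and presumably what the authors intend, but it is a substantial claim, not a corollary: the Siegel--Walfisz step, the $g$-sums, Lemma~\ref{lem:maxBourgain}, and the normalizations in the main-term analysis all change, and the asserted persistence of the Approximation Formula needs to be verified rather than assumed. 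Until that verification (or a correct pointwise domination $\mathcal B_*f\lesssim A_*|f|+(\text{controlled remainder})$) is supplied, the extension from bounded $f$ to $L^2$ is incomplete.
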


Combining our pointwise ergodic theorem on \(\ell^2\) with our maximal function bounds, we immediately obtain, via standard approximation arguments, the following corollary. 

\begin{cor}
Suppose that \( (X,\mu) \) is a probability space with \(\dimension\) commuting measure-preserving transformations \( T_1,\dots,T_n \) as in Theorem \ref{thm:pointwise_ergodic}. Then, for \(p>p_{k,n}\) and for all $f \in L^p(X,\mu)$, one has 
\begin{equation}
\lim_{\substack{\lambda \to\infty \\ \lambda \in \Gammakn}} \mathcal{A}_\lambda f = \int_X f \, d\mu 
\end{equation} 
\(\mu\)-almost everywhere. 
\end{cor}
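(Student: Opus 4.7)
The plan is the standard Banach principle / density argument that converts an $\ell^p$ maximal inequality plus pointwise convergence on a dense subclass into pointwise convergence on all of $L^p$. First, I would invoke Calder\'on's transference principle---as employed throughout the Bourgain and Magyar--Stein--Wainger framework---to deduce from Theorem~\ref{mainmaxfunction} that the dynamical maximal operator
\[
\widetilde A_* f(x) := \sup_{\lambda \in \Gammakn} |\mathcal A_\lambda f(x)|
\]
is bounded on $L^p(X,\mu)$ for every $p > p_{k,n}$. This transference is legitimate because the $T_i$ commute and are invertible and measure-preserving, each kernel $\omega_\lambda$ is finitely supported on $\Z^n$, and the supremum is taken along the countable set $\Gammakn$.

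Since $(X,\mu)$ is a probability space, for $p \ge 2$ one has $L^p(X,\mu) \subseteq L^2(X,\mu)$, and the conclusion is then immediate from Theorem~\ref{thm:pointwise_ergodic}. For the remaining range $p_{k,n} < p < 2$, note that $L^\infty(X,\mu) \subset L^2(X,\mu)$ is dense in $L^p(X,\mu)$, and for every $g \in L^\infty$ Theorem~\ref{thm:pointwise_ergodic} supplies $\mathcal A_\lambda g \to \int_X g\, d\mu$ $\mu$-almost everywhere.

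The density argument then runs as follows. Given $f \in L^p$ and $\eta > 0$, choose $g \in L^\infty$ with $\|f-g\|_{L^p} < \eta$ and set $h := f-g$. Splitting $f = g + h$ and using the a.e.\ convergence on the $g$-piece yields, $\mu$-almost everywhere,
\[
\limsup_{\substack{\lambda \to \infty \\ \lambda \in \Gammakn}} \left| \mathcal A_\lambda f - \int_X f\, d\mu \right| \le \widetilde A_* h + \|h\|_{L^1}.
\]
For any $\delta > 0$, choosing $\eta < \delta$ makes $\|h\|_{L^1} \le \|h\|_{L^p} < \delta$ (via the probability-space embedding $L^p \hookrightarrow L^1$), so that Chebyshev's inequality combined with the maximal bound gives
\[
\mu\Bigl\{ x \in X : \textstyle\limsup_\lambda |\mathcal A_\lambda f(x) - \int_X f\, d\mu| > 2\delta \Bigr\} \le \mu\{\widetilde A_* h > \delta\} \lesssim \delta^{-p}\|h\|_{L^p}^p \lesssim \delta^{-p}\eta^p.
\]
Letting $\eta \to 0$ for each fixed $\delta$, and then $\delta \to 0$, yields the claimed $\mu$-a.e.\ convergence.

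As the authors indicate with the phrase ``standard approximation arguments,'' there is no substantive obstacle here; the only non-trivial ingredient is the invocation of Calder\'on's transference principle, which is by now routine in the discrete ergodic setting once the $\ell^p$-maximal bound of Theorem~\ref{mainmaxfunction} is in hand.
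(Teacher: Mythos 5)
Your proposal is correct and follows exactly the route the paper intends when it says ``via standard approximation arguments'': transfer the $\ell^p(\Z^\dimension)$ bound of Theorem~\ref{mainmaxfunction} to $L^p(X,\mu)$ by Calder\'on's principle, then run the Banach-principle density argument against the $L^2$ pointwise result of Theorem~\ref{thm:pointwise_ergodic} using $L^\infty \subset L^2$ as the dense class. The paper leaves these steps implicit; you have simply written them out in full, and the details check.
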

						
The paper is organized as follows. In Section \ref{sec2}, we collect some needed number theoretic facts. Then in Section \ref{sec3}, we use the circle method to decompose $\widehat{\omega_\lambda}$ into a main term and an error term; we also prove $\ell^2$ bounds on the error in this section. One key additional technical difficulty here compared with the work in \cite{MSW} is that the precise shape of our error terms is more complicated than in the integral case; in particular, we need to perform a major and minor arc analysis of the linear phases (in addition to the higher degree phases). In Section \ref{sec:main_term}, we develop a careful analysis and interpolation argument to get $\ell^p$ bounds on the main term, since we cannot apply the techniques in \cite{MSW} directly. In Section~\ref{section:integral_comparison}, we compare the averages along the primes to the integral ones to control the error terms and prove Theorem~\ref{mainmaxfunction}. Finally, we prove the ergodic theorems in Section \ref{sec:6}.

\begin{acknowledge}
The first author was supported by NSF grant DMS-1502464. Parts of this work were done while the first author was in residence at MSRI, Berkeley in Spring 2017 and while the fourth author was visiting the University of Bristol with support from the ERC Advanced Grant No. 695223. The second author was supported by NSF grant DMS-1147523 and by the Fields Institute. He would also like to thank Tim Khaner and the University of Alberta's Department of Anthropology for being such gracious hosts during the Summer of 2015.
\end{acknowledge}

\newcommand{\quadraticmajorarc}{\mathfrak{M}^{\unit/\modulus}}
\newcommand{\quadraticmajorarcs}{\mathfrak{M}}
\newcommand{\quadraticminorarcs}{\mathfrak{m}}

\newcommand{\linearmajorarc}{{\mathfrak{N}}^{\apoint{\unit}/\apoint{\modulus}}}
\newcommand{\linearmajorarcs}{{\mathfrak{N}}}
\newcommand{\linearminorarcs}{{\mathfrak{n}}}
\newcommand{\majorarcsmollifier}{\Psi_{\linearmajorarcs}}

\section{Bounds for exponential sums and integrals}
\label{sec2}

Here we recall and prove some results from analytic number theory.

\begin{lemma}\label{lem:Shparlinski}
Let $a, b, q$ be integers with $\gcd(a, b, q) = 1$. Then, for any fixed $\eps > 0$, one has
\[
  \sum_{x \in \mathbb U_q} e\bigg( \frac {ax^k + bx}q \bigg) \lesssim q^{1/2+\eps}.
\]
\end{lemma}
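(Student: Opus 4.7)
The plan is to apply the standard two-stage strategy for complete exponential sums modulo a general integer: first factor via the Chinese Remainder Theorem, then handle prime moduli with the Weil bound and prime-power moduli with Hensel-type lifting. This is a known bound of Shparlinski, and the argument is essentially bookkeeping of standard tools.

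First I would factor the sum via CRT. Writing $q = \prod_p p^{e_p}$, the isomorphism $\mathbb{U}_q \cong \prod_p \mathbb{U}_{p^{e_p}}$ yields integers $a_p, b_p$ with $\gcd(a_p, b_p, p^{e_p}) = 1$ (inherited from the hypothesis) for which the sum factors as $\prod_p S(a_p, b_p, p^{e_p})$, where $S$ denotes the sum appearing in the lemma. Since $\omega(q) = O(\log q / \log\log q)$, a per-factor bound of the form $C_k (p^{e_p})^{1/2}$ multiplies to $C_k^{\omega(q)} q^{1/2} \lesssim_{k,\eps} q^{1/2+\eps}$. Thus it suffices to prove the sharp bound $\lesssim_k q^{1/2}$ for $q = p^e$ a prime power.

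For $q = p$ prime, I would apply the Weil bound $|\sum_{x \in \mathbb{F}_p} e(f(x)/p)| \leq (k-1)\sqrt{p}$ to $f(X) = aX^k + bX$; the sum over units differs by $O(1)$. The bound applies directly for $p > k$ and $p \nmid a$, since then $f$ has degree $k < p$ and hence cannot take the Artin--Schreier form $g^p - g + c$. The residual cases---finitely many primes $p \leq k$, or $p \mid a$ (in which case the coprimality hypothesis forces $p \nmid b$ and the sum reduces to $-1$)---are handled trivially.

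For $q = p^e$ with $e \geq 2$, I would use Hensel-style lifting. Writing $x = y + z p^s$ with $s = \lceil e/2 \rceil$, $y$ a unit mod $p^s$, and $z$ mod $p^{e-s}$, the Taylor expansion
\[ f(y + zp^s) \equiv f(y) + f'(y) z p^s \pmod{p^e} \]
truncates at the linear term since $p^{2s} \geq p^e$. The inner $z$-sum is geometric, equal to $p^{e-s}$ when $p^{e-s} \mid f'(y)$ and $0$ otherwise, reducing matters to counting units $y \pmod{p^s}$ with $kay^{k-1} + b \equiv 0 \pmod{p^{e-s}}$. For $p \nmid kab$, Hensel's lemma bounds this count by $O_k(p^{2s-e})$, giving the total $O_k(p^s) = O_k(q^{1/2})$ when $e$ is even; for $e$ odd, a second-order Taylor expansion absorbs the extra factor of $\sqrt{p}$. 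The main obstacle is bookkeeping the degenerate subcases---particularly when $p \mid k$ or $f'$ vanishes to high order---which require iterating the lift and refining the zero count; the hypothesis $\gcd(a, b, q) = 1$ is precisely strong enough to prevent pathological collapse at each stage.
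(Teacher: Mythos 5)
The paper does not prove this lemma: it cites it directly as a special case of Theorem~1 in Shparlinski \cite{shpar96}. Your blind sketch reconstructs essentially the same strategy that Shparlinski's proof uses --- CRT factorization into prime powers, the Weil bound for prime modulus, and stationary-phase/Hensel lifting for higher prime powers --- so the overall approach matches the underlying reference, and the reductions you perform are correct: the $C_k^{\omega(q)}\lesssim_{k,\eps}q^{\eps}$ bookkeeping, the applicability of Weil for $p>k$, $p\nmid a$, the Ramanujan-sum collapse when $p\mid a$ (hence $p\nmid b$), and the observation that $\gcd(a,b,q)=1$ passes to each CRT factor.

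That said, your argument is openly a sketch, and the part you defer is precisely where the theorem's content lies. Two places deserve emphasis. First, for odd $e$ the linear truncation with $s=\lceil e/2\rceil$ alone gives $O_k(p^{(e+1)/2})$, a loss of $\sqrt{p}$ per odd prime-power factor; summed over a squarefree-heavy $q$ this loss is fatal, so the quadratic Taylor step is not an optional refinement but a necessary one, and it in turn relies on a Gauss-sum/Weil estimate at level $p$ whose nondegeneracy ($p\nmid f''(y)$, $p$ odd) must itself be verified. Second, the degenerate configurations $p\mid k$ are not merely finitely many primes to absorb into the constant (as they were in the prime case): they recur at every power of each such $p$. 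For example when $p\mid k$ and $p\mid b$ (forcing $p\nmid a$), the derivative $f'(y)=kay^{k-1}+b$ vanishes identically modulo $p$ on units, so the critical-point count degenerates and one must iterate the lift, using $\gcd(a,b,q)=1$ to bound the depth of collapse; without this care the bound for $\sum_x e(ax^k/p^e)$ alone is only $O(p^{e(1-1/k)})$, which is far weaker than $p^{e/2}$. Since the paper leaves all of this to Shparlinski, the mismatch here is not with the paper's argument but with completeness: your outline is a faithful indication of why the lemma is true, but stops short of a proof.
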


\begin{proof}
This is a special case of Theorem 1 of Shparlinski \cite{shpar96}.
\end{proof}

\begin{lemma}\label{lem:Harman}
Let $f(x) = \alpha x^k + \dots + \alpha_1x\in \mathbb R[x]$, with $k \ge 2$, and suppose that there exist integers $a,q$ such that $(a,q) = 1$ and $|q\alpha - a| \le q^{-1}$. Then
\[ \sum_{p \le N} (\log p) e(f(p)) \lesssim NL^c \big( q^{-1} + N^{-1/2} + qN^{-k} \big)^{2^{1-2k}}, \]
where $L = \log N$ and $c = c_k$ is a constant. 
\end{lemma}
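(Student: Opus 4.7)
The plan follows the Vinogradov--Vaughan paradigm for prime exponential sums with polynomial phase. First I would pass to a sum over all integers weighted by the von Mangoldt function:
\[ \sum_{p \le N} (\log p) e(f(p)) = \sum_{n \le N} \Lambda(n) e(f(n)) + O(N^{1/2}\log N), \]
the error accounting for prime powers $p^j$ with $j \ge 2$. Then I would apply Vaughan's identity with parameters $U, V \le N^{1/2}$ satisfying $UV \le N$ to decompose the right-hand side into a bounded combination of type I sums $\sum_{m \le M} a_m \sum_{n \le N/m} e(f(mn))$ and type II sums $\sum_{M < m \le 2M} \sum_{n \sim N/m} a_m b_n e(f(mn))$, where $|a_m|, |b_n| \lesssim \tau(m)^{O(1)}$ and the ranges of $m, n$ are controlled by $U$ and $V$.

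For the type I sums, the inner sum has polynomial phase $f(mn)$ in the variable $n$ with leading coefficient $\alpha m^k$. The classical Weyl inequality applied to the inner sum already yields the desired power-saving bound, provided one transfers the Diophantine hypothesis on $\alpha$ to one on $\alpha m^k$ via a pigeonhole/Dirichlet step; summing trivially over $m \le M$ then produces the type I contribution. For the type II sums, I would apply Cauchy--Schwarz over $m$ to eliminate the $a_m$ coefficients, yielding
\[ |S_{II}|^2 \lesssim M \sum_{n, n'} b_n \overline{b_{n'}} \sum_{m} e\bigl(f(mn) - f(mn')\bigr). \]
The inner polynomial in $m$ has degree $k$ with leading coefficient $\alpha(n^k - (n')^k)$. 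Weyl differencing $k-1$ further times reduces this to a linear exponential sum whose phase involves $k!\,\alpha$ times products of differences of the auxiliary variables; standard bounds on such linear sums then yield a saving in terms of the Diophantine approximation of $\alpha$ supplied by the hypothesis.

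The main obstacle is choosing the Vaughan parameters $U, V$ (and optimizing the dyadic range $M$) to balance the type I and type II estimates so that the net exponent of the factor $(q^{-1} + N^{-1/2} + qN^{-k})$ is exactly $2^{1-2k}$. The extra factor of two in the exponent (compared to the $2^{1-k}$ in Weyl's classical inequality over all integers) is precisely the cumulative loss from the single Cauchy--Schwarz step in the type II analysis, and all polylogarithmic losses get absorbed into $L^c$. This estimate is essentially a weighted version of results in Harman's \emph{Prime-Detecting Sieves}, and the argument above follows that general strategy.
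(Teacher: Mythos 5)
Your outline follows the right general framework: Vaughan's identity, type~I/type~II decomposition, and Weyl differencing is indeed the strategy underlying Harman's Theorem~1, which the paper cites as its starting point. But the paper's actual proof is not a from-scratch derivation; it is a short \emph{patch} to Harman's argument, and your proposal misses the one point that patch addresses. Harman proves the bound with $N^\eps$ in place of $L^c$ and with exponent $4^{1-k}=2^{2-2k}$; the lemma being proved trades exponent for log power, obtaining $L^c$ with the halved exponent $2^{1-2k}$. The $N^\eps$ in Harman's version comes from the divisor-function inequality $\sum_{x \le X}\tau_r(x)\min(Y,\|\theta x\|^{-1}) \lesssim X^\eps \sum_{x \le X}\min(Y,\|\theta x\|^{-1})$, and the paper's proof consists of replacing that inequality, at the two places where it is essential, by a Cauchy--Schwarz bound $\lesssim (XY)^{1/2}(\log X)^c\{\sum_{x\le X}\min(Y,\|\theta x\|^{-1})\}^{1/2}$, which costs a square root (hence the halved exponent) but only log factors. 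Your claim that ``all polylogarithmic losses get absorbed into $L^c$'' is not automatic: a naive run of your outline, without this substitution, produces $N^\eps$ losses exactly as in Harman.

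Two further issues in the proposal. First, the exponent accounting is off: you attribute the factor to ``a single Cauchy--Schwarz'' relative to the Weyl exponent $2^{1-k}$, but $2^{1-2k}$ is not $\tfrac12\cdot 2^{1-k}$; the correct relation is $2^{1-2k}=\tfrac12\cdot 4^{1-k}$, i.e.\ half of Harman's exponent $4^{1-k}=(2^{1-k})^2$, with the square already present in Harman's type~II analysis and the extra $\tfrac12$ coming from the new Cauchy--Schwarz substitution. Second, in the type~I estimate you cannot ``sum trivially over $m$'' after applying Weyl with leading coefficient $\alpha m^k$: for many $m$ the quantity $\alpha m^k$ lies near a rational with small denominator and Weyl gives no saving, so one must carry out a counting argument over $m$ (which is what Lemmas~2--4 of Harman actually do, and again where the $\tau$-weighted sums over $\min(Y,\|\theta x\|^{-1})$ enter). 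In short, the skeleton is right, but the precise mechanism producing $L^c$ and the exponent $2^{1-2k}$ --- which is the entire content of the paper's proof sketch --- is missing.
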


\begin{proof}
This is a variant of Theorem 1 in Harman \cite{Harman:exp_sums}, where the exponent of $2^{1-2k}$ is replaced by $4^{1-k}$ at the expense of replacing the factor $L^c$ above by $N^{\eps}$. The present version is well-known to the experts, but since we were unable to locate it in the literature, we will provide a brief sketch of the argument. The proof requires small adjustments to the proofs of Lemmas~2--4 in \cite{Harman:exp_sums}. Those proofs use the inequality
\begin{equation}\label{eq2.1}
\sum_{x \le X} \tau_r(x) \min\big( Y, \| \theta x \|^{-1} \big) \lesssim X^{\eps} \sum_{x \le X} \min\big( Y, \| \theta x \|^{-1} \big),
\end{equation}
where $\tau_r(x)$ is the $r$-fold divisor function. However, in most places the above inequality is used for convenience rather than by necessity. The places where this inequality is really needed occur towards the ends of the proofs of Lemmas 3 and 4 in \cite{Harman:exp_sums}, when one wants to apply a standard estimate (e.g., Lemma 2.2 in Vaughan~\cite{Vaughan}) to the sum on the right side of \eqref{eq2.1}. In those places, we can replace \eqref{eq2.1} with
\[ \sum_{x \le X} \tau_r(x) \min\big( Y, \| \theta x \|^{-1} \big) \lesssim (XY)^{1/2}(\log X)^c \bigg\{ \sum_{x \le X} \min\big( Y, \| \theta x \|^{-1} \big) \bigg\}^{1/2}. \]
We can then follow the rest of Harman's proof.
\end{proof}

\begin{lemma}\label{lem:Balog}
Let $a,b,q,r$, be integers such that $(a,q) = (b,r) = 1$ and $|\alpha - a/q| \le 2N^{-1}$. Then
\[ \sum_{\substack{p \le N\\p \equiv b \!\!\!\! \pmod r}} (\log p) e(\alpha p) \lesssim NL^3 \big( q^{-1} + N^{-2/5} + qN^{-1} \big)^{1/2}. \] 
\end{lemma}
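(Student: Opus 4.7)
The plan is to adapt Vinogradov's method for estimating exponential sums over primes to the arithmetic progression setting, following Balog. Since $(b,r) = 1$, only primes coprime to $r$ can satisfy the congruence. I would apply Vaughan's identity to the von Mangoldt function $\Lambda$ with parameters $U = V = N^{2/5}$, which decomposes our sum (modulo the negligible $O(N^{1/2}\log N)$ contribution from prime powers) into Type~I sums
\[ S_I = \sum_{m \le U} c_m \sum_{\substack{\ell \le N/m \\ m\ell \equiv b \!\!\!\! \pmod{r}}} e(\alpha m\ell), \quad |c_m| \le \log N, \]
and Type~II sums
\[ S_{II} = \sum_{U < m \le V} a_m \sum_{\substack{V < \ell \le N/m \\ m\ell \equiv b \!\!\!\! \pmod{r}}} b_\ell\, e(\alpha m\ell), \quad |a_m|, |b_\ell| \lesssim \tau(\cdot). \]

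For Type~I, the congruence $m\ell \equiv b \pmod{r}$ combined with $(b,r) = 1$ forces $(m,r) = 1$, so the inner sum is a truncated geometric progression of length $N/(mr)$ with common ratio $e(\alpha m r)$, giving the bound $\min\bigl(N/(mr),\|\alpha m r\|^{-1}\bigr)$ for each admissible $m$. Summing over $m \le U$ via the standard averaging inequality of Vaughan (Lemma~2.2 in \emph{The Hardy--Littlewood Method}), after transferring the Dirichlet approximation $|\alpha - a/q| \le 2N^{-1}$ to a comparable one for $\alpha r$, one obtains
\[ |S_I| \lesssim N L^{2}\bigl(q^{-1} + N^{-3/5} + qN^{-1}\bigr). \]

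For Type~II, apply Cauchy--Schwarz in the outer $m$-variable and expand the square to get an inner double sum over $(\ell_1, \ell_2)$ constrained by $\ell_1 \equiv \ell_2 \pmod{r}$. The diagonal $\ell_1=\ell_2$ contributes $\lesssim N^2 L^{3}/(V r)$ after invoking the divisor moment bound $\sum_{\ell \le N}\tau(\ell)^2 \lesssim N L^3$; for the off-diagonal, interchanging summation produces one-dimensional linear sums $\sum_{m \le V} e(\alpha m h) \lesssim \min(V,\|\alpha h\|^{-1})$ with $h = \ell_1 - \ell_2 \ne 0$ in a fixed residue class mod $r$, and a second application of Vaughan's lemma gives
\[ |S_{II}|^2 \lesssim N^2 L^{6}\bigl(V^{-1} + q^{-1} + V N^{-1} + q N^{-1}\bigr). \]
With $V = N^{2/5}$ this yields $|S_{II}| \lesssim N L^{3}(q^{-1} + N^{-2/5} + q N^{-1})^{1/2}$, which dominates the Type~I estimate and establishes the claim.

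The main obstacle will be bookkeeping logarithmic factors carefully enough to obtain $L^3$ exactly, instead of a higher power or $N^\eps$; this requires using $\sum_\ell \tau(\ell)^2 \lesssim N L^3$ at the Cauchy--Schwarz step of the Type~II analysis rather than the crude divisor bound, in the same spirit as the sketch of Lemma~\ref{lem:Harman}. The dependence on the modulus $r$ (which is fixed in our applications) gives an $r^{-1/2}$ saving that is absorbed into the implicit $\lesssim$-constant.
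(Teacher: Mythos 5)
The paper does not give a self-contained proof here: it simply cites the main theorem of Balog and Perelli \cite{BalogPerelli}, noting that the stated bound is a slight simplification of theirs (dropping the $r$-dependent savings). Your sketch reconstructs exactly the Vaughan-identity/bilinear-sums argument that Balog--Perelli themselves use, so it is the same approach.

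Two technical points you gloss over are worth spelling out if you flesh this into a full proof. First, in the Type~II step the residue class of $\ell$ modulo $r$ depends on $m$ (it is $bm^{-1} \bmod r$), so one cannot simply interchange the $m$- and $(\ell_1,\ell_2)$-sums; the standard fix, which Balog--Perelli use, is to partition $m$ into the $\varphi(r)$ residue classes mod $r$ before applying Cauchy--Schwarz, so that the constraint on $\ell_1,\ell_2$ (and hence the condition $h = \ell_1-\ell_2 \equiv 0 \pmod r$) is fixed within each class; the sum over classes is then what produces the $r$-savings you mention discarding. Second, the hypothesis here is $|\alpha - a/q| \le 2N^{-1}$, which for $q$ close to $N$ is weaker than the condition $|\alpha - a/q| \le q^{-2}$ required by Vaughan's Lemma 2.2 (and assumed by Balog--Perelli); one must either invoke Dirichlet to pass to a fresh approximation and check the bound transfers, or note that in the range $q \gg \sqrt N$ the stated inequality is vacuous because the right-hand side already exceeds the trivial bound $N$. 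Your remark that the unwanted dependence is a \emph{saving} in $r$ (and so can be thrown away) is correct, though the phrase ``absorbed into the implicit constant'' is slightly misleading: nothing $r$-dependent is being hidden, you are just proving something stronger than asserted.
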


\begin{proof}
This is the main result of Balog and Perelli \cite{BalogPerelli}, with some of the terms slightly simplified for use in the present context.
\end{proof}

When $1 \le Q \le X$, we define the \emph{set of major arcs} $\mathfrak{M}(X, Q)$ by
\[ \mathfrak M(X,Q) = \bigcup_{q \le Q} \bigcup_{a \in \mathbb U_q} \big\{ \theta \in \mathbb T : |q\theta - a| \le QX^{-1} \big\}. \]
The complement of a set of major arcs, $\mathfrak{m}(X,Q) = \mathbb T \setminus \mathfrak M(X,Q)$, is the respective \emph{set of minor arcs}. When working with a particular choice of major and minor arcs, we may write $\mathfrak{M}_{a/q}$ for the major arc centered at the rational $a/q$. Note that when $2Q < X$, the set $\mathfrak{M}(X, Q)$ is the disjoint union of closed intervals of total measure $O(QX^{-1})$. \\

Our analysis of $\widehat{\omega_\lambda}(\bm\xi)$ will depend on the exponential sum
\begin{equation}\label{eq2.S} 
S_N(\theta,\xi) = \sum_{p \le N} (\log p)e(\theta p^k + \xi p), 
\end{equation}
where the summation is over the prime numbers $p \le N$. In particular, we need to approximate $S_N(\theta, \xi)$ when both $\theta$ and $\xi$ are near rationals with small denominators. The approximations involve the exponential sum $g(a,q; b,r)$ defined above and the oscillatory integral
\begin{equation}\label{eq2.I} 
 I_N(\delta, \eta) = \int_0^N e\big( \delta x^k + \eta x \big) \, dx.  
\end{equation}
We note that, by Lemma \ref{lem:Shparlinski},  
\begin{equation}\label{eq2.3}
g(a,q; b,r) \lesssim [q,r]^{-1/2 + \eps},
\end{equation}
and that the $k$th-derivative estimate for oscillatory integrals (Proposition 2 on p. 332 in Stein~\cite{Stein93}) yields
\begin{equation}\label{eq2.4}
I_N(\delta, \eta) \lesssim \frac N{(1 + N^k|\delta|) ^{1/k}}.
\end{equation}
Furthermore, since 
\[ I_N(\delta, \eta) = k^{-1}N\int_0^1 u^{1/k-1} e\big( \delta_0 u + \eta_0 u^{1/k} \big) \, du, \]
where $\delta_0 = \delta N^k$ and $\eta_0 = \eta N$, we can also apply the second-derivative estimate (the case $k=2$ of the corollary on p. 334 of \cite{Stein93}) to deduce the bound
\begin{equation}\label{eq2.5}
I_N(\delta, \eta) \lesssim \frac N{(1 + N|\eta|) ^{1/2}}.
\end{equation}
Our next lemma uses the Siegel--Walfisz theorem to approximate $S_N(\theta, \xi)$.

\begin{lemma}\label{lem:major arc approx}
Let $Q, R \le (\log N)^C$ for some fixed $C > 0$, let $\theta \in \mathfrak{M}_{a/q}$ for some major arc of the set $\mathfrak M = \mathfrak M(N^k, Q)$, and let $\xi \in \mathfrak N_{b/r}$ for some major arc of the set $\mathfrak N = \mathfrak M(N, R)$. Then
\[ S_N(\theta, \xi) = g(a,q; b,r)I_N(\theta - a/q, \xi - b/r) + O\big( N(QR)^{-10} \big). \] 
\end{lemma}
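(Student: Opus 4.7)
The plan is to execute a standard major arc analysis that separates the arithmetic and analytic content of $S_N(\theta,\xi)$, then quote the Siegel--Walfisz theorem to handle the resulting restricted prime sums. I begin by writing $\theta = a/q + \beta$ and $\xi = b/r + \gamma$, with $|\beta| \le QN^{-k}$ and $|\gamma| \le RN^{-1}$ by the major arc conditions. Set $s = [q,r]$. Since both $ap^k/q$ and $bp/r$ depend only on $p \bmod s$ (modulo integers), the phase decouples arithmetically: for $p$ coprime to $s$,
\[ S_N(\theta, \xi) = \sum_{x \in \mathbb U_s} e\bigg(\frac{ax^k}{q} + \frac{bx}{r}\bigg) \sum_{\substack{p \le N\\ p \equiv x\,(\bmod\,s)}} (\log p)\, e(\beta p^k + \gamma p) + O(s \log^2 N), \]
where the tail error collects the $O(\log s)$ primes dividing $s$.

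Next I would evaluate the inner sum for a fixed residue class $x \in \mathbb U_s$. Let $\psi(t;s,x) = \sum_{p \le t,\ p \equiv x\,(s)} \log p$ and $h(t) = e(\beta t^k + \gamma t)$. Siegel--Walfisz gives
\[ \psi(t;s,x) = t/\varphi(s) + O\!\left( t \exp\bigl(-c\sqrt{\log t}\bigr) \right) \]
uniformly for $s \le (\log N)^{2C}$ and $t \le N$. Partial summation against $h$, using the derivative bound $|h'(t)| \lesssim |\beta| t^{k-1} + |\gamma|$, then yields
\[ \sum_{\substack{p \le N\\ p \equiv x\,(\bmod\,s)}} (\log p)\, h(p) = \frac{1}{\varphi(s)} \int_0^N h(t)\, dt + O\!\left( N \exp(-c\sqrt{\log N})\, (1 + |\beta|N^k + |\gamma|N) \right). \]
Since $|\beta| N^k + |\gamma| N \le Q + R \le 2(\log N)^C$, the Siegel--Walfisz savings dominate and the error is $O(N(\log N)^{-A})$ for any fixed $A > 0$.

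Summing over $x \in \mathbb U_s$ and recognizing $\int_0^N h(t)\, dt = I_N(\beta,\gamma)$, the arithmetic prefactor becomes $\varphi(s)\, g(a,q;b,r)$, matching the definition of $g$. The aggregated error is $\varphi(s) \cdot O(N(\log N)^{-A}) \lesssim QR \cdot N(\log N)^{-A} \lesssim N(\log N)^{2C - A}$, which becomes $O(N(QR)^{-10})$ after choosing $A$ sufficiently large in terms of the fixed constant $C$. This gives exactly the claimed formula.

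The main conceptual point — and the only place where care is needed — is verifying that the Siegel--Walfisz error term survives integration by parts against the oscillating factor $h(t)$. This succeeds precisely because the major arc conditions force $|\beta| N^k \le Q$ and $|\gamma| N \le R$ to be polynomial in $\log N$, whereas Siegel--Walfisz delivers super-polynomial savings $\exp(-c\sqrt{\log N})$. There are no circle-method technicalities beyond this: the only role of the hypotheses $Q, R \le (\log N)^C$ is to ensure the range of admissible moduli $s \le QR$ stays within the Siegel--Walfisz regime and that all aggregated log-power losses can be absorbed into the super-polynomial saving.
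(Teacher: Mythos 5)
Your proof is correct and follows essentially the same route as the paper: decompose $S_N(\theta,\xi)$ into arithmetic progressions modulo $s = [q,r]$, apply the Siegel--Walfisz theorem to each residue class, and use partial summation against the smooth oscillatory factor to extract $\varphi(s)^{-1}I_N(\delta,\eta)$, which reassembles into $g(a,q;b,r)I_N$ after summing over the residues. The only cosmetic difference is that you quote Siegel--Walfisz in the $\exp(-c\sqrt{\log N})$ form and absorb the loss at the end, whereas the paper packages the same saving directly as $O(N(QR)^{-12})$; both yield the stated $O(N(QR)^{-10})$ error.
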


\begin{proof}
We write $\delta = \theta - a/q$, $\eta = \xi - b/r$, and $s = [q,r]$. When we partition the exponential sum $S_N(\theta,\xi_j)$ into sums over primes in fixed arithmetic progressions, we find that 
\begin{align}
S_N(\theta,\xi) &= \sum_{h \in \mathbb U_s} \sum_{\substack{p \le N\\ p \equiv h \!\!\!\! \pmod s}} (\log p) e\bigg( \bigg( \frac aq + \delta \bigg)p^k + \bigg( \frac br + \eta \bigg) p \bigg) + O(s) \notag\\
&= \sum_{h \in \mathbb U_s} e\bigg( \frac {ah^k}q + \frac {bh}{r} \bigg) \sum_{\substack{p \le N\\ p \equiv h \!\!\!\! \pmod s}} (\log p) e( \delta p^k + \eta p ) + O\big( QR \big). \label{eq2.6}   
\end{align}
Since $s \le QR \le (\log N)^{2C}$ and $h \in \mathbb U_s$, the Siegel--Walfisz theorem yields  
\[ \sum_{\substack{p \le x\\ p \equiv h \!\!\!\! \pmod s}} \log p = \frac {x}{\varphi(s)} + O\big( N(QR)^{-12} \big) \]
for all $x \le N$. Using  this asymptotic formula and partial summation, we obtain
\begin{equation}\label{eq2.7}
\sum_{\substack{p \le N\\ p \equiv h \!\!\!\! \pmod s}} (\log p) e( \delta p^k + \eta p ) = \varphi(s)^{-1}I_N(\delta,\eta) + O\big( N(QR)^{-11} \big).
\end{equation}
The lemma follows from \eqref{eq2.6} and \eqref{eq2.7}. 
\end{proof}

\begin{lemma}\label{lem:maxBourgain}
Let $k \ge 2$ and $2s \ge \min(2^k, k^2+k) + 2$. Then
\begin{equation}\label{eq:lem6.0}
\int_{\T} \sup_{\xi} |S_N(\theta, \xi) |^{2s} \, d\theta \lesssim N^{2s-k}L^{2s},
\end{equation}
where $L = \log N$.
\end{lemma}

\begin{proof}
Set $H_j = sN^j$ and define 
\[ a_h(\theta) = \sum_{\substack{p_1, \dots, p_s \le N\\ p_1 + \dots + p_s = h}} (\log\mathbf p) e(\theta \form_{s,k}(\mathbf p)), \]
so that
\[ S_N(\theta, \xi)^s = \sum_{h \le H_1} a_h(\theta)e(\xi h). \]
By applying Cauchy's inequality, we deduce that
\[ \sup_{\xi} |S_N(\theta, \xi)|^{2s} \le H_1\sum_{h \le H_1} |a_h(\theta)|^2. \]
Hence, 
\begin{equation}\label{eq:lem6.1}
\int_{\T} \sup_{\xi} |S_N(\theta, \xi) |^{2s} \, d\theta \le H_1\sum_{h \le H_1} \int_\T a_h(\theta)\overline{a_h(\theta)} \, d\theta.
\end{equation} 

By orthogonality, 
\[ \int_\T a_h(\theta)\overline{a_h(\theta)} \, d\theta = \sum_{\mathbf p, \mathbf p' : \eqref{eq:lem6.2}} (\log\mathbf p) (\log \mathbf p'), \]
where $\mathbf p, \mathbf p' \le N$ and satisfy the conditions
\begin{equation}\label{eq:lem6.2}
\form_{s,k}(\mathbf p) = \form_{s,k}(\mathbf p'), \quad \form_{s,1}(\mathbf p) = \form_{s,1}(\mathbf p') = h.
\end{equation}
Thus,
\begin{equation}\label{eq:lem6.1a} 
\sum_{h \le H_1} \int_\T a_h(\theta)\overline{a_h(\theta)} \, d\theta \lesssim L^{2s}I_{s,k}(N), 
\end{equation} 
where $I_{s,k}(N)$ denotes the number of integer solutions of the system
\begin{equation}\label{eq:lem6.3}
\form_{s,k}(\mathbf x) = \form_{s,k}(\mathbf y), \quad \form_{s,1}(\mathbf x) = \form_{s,1}(\mathbf y), 
\end{equation}
with $1 \le \mathbf x, \mathbf y \le N$. The lemma follows from \eqref{eq:lem6.1}, \eqref{eq:lem6.1a}
and the inequality
\begin{equation}\label{eq:lem6.4} 
I_{s,k}(N) \lesssim N^{2s-k-1}, 
\end{equation}
which we establish next.

Under the hypothesis $2s \ge 2^k+2$, the bound \eqref{eq:lem6.4} is a direct consequence of the main result of Br\"udern and Robert \cite{Brudern}. On the other hand, by grouping the solutions of \eqref{eq:lem6.3} according to the values of the expressions $\form_{s,j}(\mathbf x) - \form_{s,j}(\mathbf y)$, $1 < j < k$, we find that
\begin{equation}\label{eq:lem6.5} 
I_{s,k}(N) \le \sum_{|h_2| < H_2} \cdots \sum_{|h_{k-1}| < H_{k-1}} J_{s,k}(N; 0, h_2, \dots, h_{k-1}, 0), 
\end{equation}
where $J_{s,k}(N; \mathbf h)$ is the generalized Vinogradov integral
\[ J_{s,k}(N; \mathbf h) = \int_{\T^k} \bigg| \sum_{x \le N} e\big( \alpha_kx^k + \dots + \alpha_1x \big) \bigg|^{2s} e(-\bm\alpha \cdot \mathbf h) \, d\bm\alpha. \]
We can now refer to the recent work by Bourgain, Demeter and Guth \cite{BourgainDemeterGuth} on the classic Vinogradov integral $J_{s,k}(N) = J_{s,k}(N; \mathbf 0)$ to get
\[ J_{s,k}(N; \mathbf h) \le J_{s,k}(N) \lesssim  N^{2s-k(k+1)/2}, \]
provided that $2s > k(k+1)$ (see \S5 in \cite{BourgainDemeterGuth}). Inserting this bound into the right side of \eqref{eq:lem6.5} yields \eqref{eq:lem6.4} for $2s > k^2+k$. 
\end{proof}

In \S\ref{sec:main_term}, we will need some more refined estimates for $g(a,q;b,r)$ and its averages; we establish those in the next lemma. Here, $\mu(n)$ denotes the M\"obius function from number theory (see \S16.3 in Hardy and Wright \cite{HardyWright}). 

\begin{lemma}\label{lem:g-sum}
Let $a,b,q,r$, be integers with $(a,q) = (b,r) = 1$, and write $q_0 = q/(q,r)$ and $r_0 = r/(q,r)$. Then:
\begin{itemize}
\item [(i)] if $(r_0, q) > 1$, one has $g(a,q; b,r) = 0$; 
\item [(ii)] if $(r_0, q) = 1$, one has 
\[ g(a, q; b, r) = \frac {\mu(r_0)}{\varphi(r_0)} g(ar_0^k,q; bq_0,q); \] 
\item [(iii)] one has
\begin{equation}\label{eq:g-sum.iii}
\sum_{u \in \Z_r} \bigg| \sum_{b \in \mathbb U_r} g(a, q; b, r)e(-ub/r) \bigg| \le \frac {\tau(r)r}{\varphi(r_0)}.   
\end{equation}
\end{itemize}
\end{lemma}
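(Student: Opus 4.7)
The strategy is to handle parts (i) and (ii) via Chinese Remainder Theorem manipulations of $\mathbb U_{[q,r]}$, and then to derive (iii) from (ii) by a second CRT factorization applied to the $b$-sum.

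For part (ii), assume $(r_0,q)=1$. Since $d:=(q,r)$ divides $q$, this forces $(d,r_0)=1$, and $[q,r]=qr_0$ with $(q,r_0)=1$; so the CRT gives $\mathbb U_{qr_0}\cong \mathbb U_q\times \mathbb U_{r_0}$, with $x\leftrightarrow(x_1,x_2)$. Partial fractions produce units $\alpha\in\mathbb U_d$ and $\beta\in\mathbb U_{r_0}$ with $\alpha r_0+\beta d=1$, hence $1/r\equiv \alpha/d+\beta/r_0\pmod 1$; combined with $1/d=q_0/q$, the phase separates as
\[
\frac{ax^k}{q}+\frac{bx}{r}\equiv\frac{ax_1^k+b\alpha q_0 x_1}{q}+\frac{b\beta x_2}{r_0}\pmod 1.
\]
The $x_2$-factor is the Ramanujan sum $c_{r_0}(b\beta)=\mu(r_0)$, since $(b\beta,r_0)=1$. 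For the $x_1$-factor, substitute $x_1\mapsto r_0 y$ (a bijection of $\mathbb U_q$); the identity $\alpha r_0 q_0\equiv q_0\pmod q$ (derived from $\alpha r_0=1-\beta d$ and $dq_0=q$) converts the linear phase to $bq_0 y/q$, so the $x_1$-sum equals $\varphi(q)\,g(ar_0^k,q;bq_0,q)$. Part (ii) follows after accounting for the normalization $\varphi([q,r])=\varphi(q)\varphi(r_0)$.

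For part (i), suppose $p\mid (r_0,q)$, i.e.\ $1\le \alpha_p < \beta_p$ where $\alpha_p=v_p(q)$ and $\beta_p=v_p(r)$. The prime-by-prime CRT factorization $g(a,q;b,r)=\prod_{p\mid [q,r]}g_p$ reduces the claim to showing that the $p$-factor
\[ g_p=\frac{1}{\varphi(p^{\beta_p})}\sum_{x\in\mathbb U_{p^{\beta_p}}}e\!\left(\frac{a'x^k}{p^{\alpha_p}}+\frac{b'x}{p^{\beta_p}}\right) \]
vanishes, for some $a',b'$ coprime to $p$. Apply the substitution $x\mapsto x(1+tp^{\beta_p-1})$, $t\in\mathbb Z/p\mathbb Z$, which is a bijection of $\mathbb U_{p^{\beta_p}}$ because $1+tp^{\beta_p-1}\equiv 1\pmod p$. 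A binomial expansion shows that each term $\binom{k}{j}t^j p^{j(\beta_p-1)-\alpha_p}$ with $j\ge 1$ is an integer, since $j(\beta_p-1)\ge \alpha_p$: so $a'x^k/p^{\alpha_p}$ is invariant mod $1$, while $b'x/p^{\beta_p}$ acquires the shift $b'xt/p$. Averaging over $t$ produces $\sum_{t\in\mathbb Z/p\mathbb Z}e(b'xt/p)=0$, because $(b'x,p)=1$. Hence $g_p=0$.

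For part (iii), both $(r_0,q)>1$ and $\mu(r_0)=0$ kill every $g(a,q;b,r)$ via (i) and (ii), so assume $(r_0,q)=1$ and $r_0$ is squarefree. CRT decomposes $b\leftrightarrow(b_1,b_2)\in\mathbb U_d\times \mathbb U_{r_0}$ and $u\leftrightarrow(u_d,u_{r_0})\in\mathbb Z_d\times \mathbb Z_{r_0}$. Substituting (ii) and observing that $g(ar_0^k,q;bq_0,q)$ depends on $b$ only modulo $d$, the inner sum factors:
\[
\sum_{b\in\mathbb U_r}g(a,q;b,r)e(-ub/r)=\frac{\mu(r_0)}{\varphi(r_0)}\,c_{r_0}(u_{r_0})\,T(u_d),
\]
where $T(u_d)=\sum_{b_1\in\mathbb U_d}g(ar_0^k,q;b_1q_0,q)\,e(-u_db_1\alpha/d)$. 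Taking absolute values and summing over $u\in\mathbb Z_r$ splits the left side as a product. The Ramanujan factor is handled by the identity $\sum_{u\in\mathbb Z_{r_0}}|c_{r_0}(u)|=\varphi(r_0)\tau(r_0)$ (valid for squarefree $r_0$, by grouping $u$ according to $(u,r_0)$). For the $T$-factor, open $g(ar_0^k,q;b_1q_0,q)$ as a sum over $x\in\mathbb U_q$, swap the $b_1$- and $x$-sums, and recognize $\sum_{b_1\in\mathbb U_d}e(b_1(x-u_d\alpha)/d)=c_d(x-u_d\alpha)$; the triangle inequality and the bijection $u_d\mapsto x-u_d\alpha$ (enabled by $\alpha\in\mathbb U_d$) yield $\sum_{u_d}|T(u_d)|\le\varphi(d)\tau(d)$. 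Combining, and using $\tau(r)=\tau(d)\tau(r_0)$ together with $\varphi(d)\varphi(r_0)\le dr_0=r$, gives the claimed bound $\tau(r)r/\varphi(r_0)$. The principal technical point is the vanishing in (i); the rest is bookkeeping, with the key observation for (iii) being that $(q,r_0)=1$ forces $(d,r_0)=1$, enabling the CRT decomposition of the $b$-sum.
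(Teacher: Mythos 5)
Your proof is correct. Parts (i) and (ii) are essentially the paper's argument with minor technical variations: in (i) you use the multiplicative substitution $x\mapsto x(1+tp^{\beta_p-1})$ averaged over $t\in\mathbb Z/p\mathbb Z$, while the paper parametrizes $\mathbb U_{p^\beta}$ additively as $z=u+p^\alpha v$ and sums over $v$; the two devices extract the same hidden linear phase and kill the sum in the same way. In (iii), however, you take a genuinely different route. After reducing to $(q,r_0)=1$ and $r_0$ squarefree via (i) and (ii), you CRT-factor the $b$-sum over $\mathbb U_r\cong\mathbb U_{(q,r)}\times\mathbb U_{r_0}$ (the key point, which you correctly flag, being that $(q,r_0)=1$ and $(q,r)\mid q$ force $((q,r),r_0)=1$), so the sum over $u\in\mathbb Z_r$ splits into a product of two one-modulus sums: the Ramanujan factor $\sum_{u}|c_{r_0}(u)|=\varphi(r_0)\tau(r_0)$ and a factor controlled by $\sum_{m\in\mathbb Z_{(q,r)}}|c_{(q,r)}(m)|\le\varphi((q,r))\tau((q,r))$. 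The paper instead applies (ii) once, recognizes the full $b$-sum as a single Ramanujan sum $c_r(r_0x-u)$ modulo $r$, and unpacks its divisor structure directly. Both yield the stated estimate; your CRT split is a bit cleaner at the bookkeeping stage and in fact delivers the marginally sharper bound $\tau(r)\varphi((q,r))$, which is $\le\tau(r)r/\varphi(r_0)$ by $\varphi((q,r))\varphi(r_0)\le (q,r)\,r_0=r$.
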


\begin{proof}
(i) Suppose that $(r_0, q) > 1$. Then there is a prime number $p$ and positive integers $\alpha, \beta$, with $\alpha < \beta$, such that
\[ p^\alpha \mid q, \quad p^{\alpha+1} \nmid q, \quad p^\beta \mid r, \quad p^{\beta+1} \nmid r. \]
Let $q = p^\alpha q_1$ and $r = p^\beta r_1$. By a change of the summation variable $x \in \mathbb U_{[q,r]}$ in $g(a,q; b,r)$ to $x = p^\beta y + [q_1,r_1]z$, where $y \in \mathbb U_{[q_1,r_1]}$ and $z \in \mathbb U_{p^\beta}$, we can factor the exponential sum $g(a,q; b,r)$ as
\begin{equation}\label{eq2.8}
g(a,q; b,r) = g(ap^{k\beta - \alpha}, q_1; b, r_1)g(a_1, p^\alpha; b_1, p^\beta), 
\end{equation}
where $a_1 = a[q_1,r_1]^kq_1^{-1}$ and $b_1 = b[q_1,r_1]r_1^{-1}$. We note that $(a_1,p) = (b_1,p) = 1$. Next, we write the variable $z \in \mathbb U_{p^\beta}$ in $g(a_1, p^\alpha; b_1, p^\beta)$ as $z = u + p^\alpha v$, where $u \in \mathbb U_{p^\alpha}$ and $v \in \mathbb Z_{p^{\gamma}}$, $\gamma = \beta-\alpha$. This gives
\[ \varphi(p^{\beta})g(a_1, p^\alpha; b_1, p^\beta) = \sum_{u \in \mathbb U_{p^\alpha}} e\bigg( \frac {a_1u^k}{p^\alpha} + \frac {b_1u}{p^\beta} \bigg) \sum_{v \in \mathbb Z_{p^{\gamma}}} e\bigg( \frac {b_1v}{p^{\gamma}} \bigg). \]
Since $(b_1,p) = 1$, the last sum over $v$ vanishes. Together with the factorization \eqref{eq2.8}, this proves (i). 

(ii) When $(q,r_0) = 1$, we change the summation variable $x \in \mathbb U_{[q,r]}$ in $g(a,q; b,r)$ to $x = r_0 y + qz$, where $y \in \mathbb U_q$ and $z \in \mathbb U_{r_0}$. Similarly to \eqref{eq2.8}, we have
\[ g(a,q; b,r) = g(ar_0^k, q; b, (q,r))\varphi(r_0)^{-1}\sum_{z \in \mathbb U_{r_0}} e\bigg( \frac {bq_0z}{r_0} \bigg). \]
We now note that the last exponential sum is a Ramanujan sum modulo $r_0$ and $(bq_0,r_0) = 1$. Hence, the claim follows from a classical expression for the Ramanujan sum (see Theorem~272 in Hardy and Wright \cite{HardyWright}).

(iii) Let $h(a,q; u,r)$ denote the sum over $b$ on the left side of \eqref{eq:g-sum.iii}. By part (i), we may assume that $(q,r_0) = 1$. We can then use part (ii) to rewrite $h(a,q; u,r)$ as 
\[ h(a,q; u,r) = \frac {\mu(r_0)}{\varphi(r_0)\varphi(q)} \sum_{x \in \mathbb U_q} e\bigg( \frac {ar_0^kx^k}q \bigg) \sum_{b \in \mathbb U_r} e \bigg( \frac {(r_0x - u)b}r \bigg). \]
Since the inner sum is a Ramanujan sum, we deduce that
\[ |h(a,q; u,r)| \le \frac {1}{\varphi(r_0)\varphi(q)} \sum_{d \mid r} \, d \!\!\! \sum_{\substack{x \in  \mathbb U_q\\ d \mid (r_0x - u)}} \!\!\!\! 1. \]
We remark that a divisor $d$ of $r$ factors uniquely as $d = d_1d_2$, where $d_1 \mid (q,r)$ and $d_2 \mid r_0$. When $d_2 \nmid u$, the sum over $x$ vanishes. On the other hand, when $d_2 \mid u$, the condition $d \mid (r_0x - u)$ restricts $h$ to a single residue class modulo $d_1$; hence, the inner sum is then bounded by $\varphi(q)/d_1$. We conclude that
\[ |h(a,q; u,r)| \le \frac {1}{\varphi(r_0)\varphi(q)} \sum_{d_1 \mid (q,r)}\sum_{d_2 \mid (r_0,u)} d_1d_2 \bigg(  \frac {\varphi(q)}{d_1} \bigg) = \frac {\tau((q,r))}{\varphi(r_0)} \sum_{d \mid (r_0,u)} d. \]
Summing the last bound over $u$, we deduce
\[ \sum_{u \in \Z_r} |h(a,q; u,r)| \le \frac {\tau((q,r))}{\varphi(r_0)} \sum_{u \in \Z_r} \sum_{d \mid (r_0,u)} d = \frac {\tau((q,r))}{\varphi(r_0)} \sum_{d \mid r_0} d \sum_{\substack{u \in \Z_r\\ d \mid u}} 1 = \frac {\tau(r)r}{\varphi(r_0)}, \]
where we have used that $\tau((q,r))\tau(r_0) = \tau(r)$. 
\end{proof}

\section{Proof of the \approximationformula}
\label{sec3}

In this section, we use the circle method to prove Theorem \ref{thm:approximation_formula}. However, before we proceed with that, we establish a lemma that allows us to leverage our estimates for exponential sums to bound various dyadic maximal functions, including the maximal function of the error term. 

\begin{lemma}\label{lem:L2 bound}
Let $\mathcal L$ be a set of integers. For $\lambda \in \mathcal L$, let $T_\lambda$ be a convolution operator on $\ell^2(\mathbb Z^d)$ with Fourier multiplier $\widehat{m_\lambda}(\bm\xi)$ given by
\[ \widehat{m_\lambda}(\bm\xi) = \int_{X} K(\theta; \bm\xi) e(\Phi(\lambda,\theta)) \, d\mu(\theta), \]
where $(X, \mu)$ is a measure space, $\Phi: \Z \times X \to \R$, and $K( \cdot; \bm\xi) \in L^1(X, \mu)$ is a kernel independent of $\lambda$. Let
\[ (T_*f)(\mathbf x) = \sup_{\lambda \in \mathcal L} |(T_\lambda f)(\mathbf x)|. \]
Then 
\[ \| T_* \|_{\ell^2(\mathbb Z^d) \to \ell^2(\mathbb Z^d)} \le \int_{X} \sup_{\bm\xi \in \mathbb T^d}  | K( \theta; \bm\xi) | \, d\mu(\theta). \]
\end{lemma}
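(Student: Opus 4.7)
The key observation is that the phase $e(\Phi(\lambda,\theta))$ depends on $\lambda$ and $\theta$, but is independent of the frequency variable $\bm\xi$. This means we can factor out the $\lambda$-dependence from the Fourier multiplier and pull the supremum inside the integral in $\theta$.

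My plan is as follows. First, for each $\theta\in X$, introduce the auxiliary convolution operator $S_\theta$ on $\ell^2(\Z^d)$ defined by the Fourier multiplier $K(\theta;\bm\xi)$; note that $S_\theta$ does not depend on $\lambda$. By Fourier inversion and Fubini's theorem (justified by the assumption $K(\theta;\cdot)\in L^1(X,\mu)$ applied via Plancherel on the Fourier side, taking $f$ first in a dense subclass such as finitely supported functions), one obtains the pointwise identity
\[
(T_\lambda f)(\mathbf x) = \int_{X} e(\Phi(\lambda,\theta))\,(S_\theta f)(\mathbf x)\,d\mu(\theta).
\]
Taking absolute values and using $|e(\Phi(\lambda,\theta))|=1$, the right-hand side is bounded by $\int_X |(S_\theta f)(\mathbf x)|\,d\mu(\theta)$, which is independent of $\lambda$. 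Consequently
\[
(T_* f)(\mathbf x) \le \int_X |(S_\theta f)(\mathbf x)|\,d\mu(\theta).
\]

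Next, I apply Minkowski's inequality for integrals on $\ell^2(\Z^d)$ to obtain
\[
\|T_* f\|_{\ell^2(\Z^d)} \le \int_X \|S_\theta f\|_{\ell^2(\Z^d)}\,d\mu(\theta).
\]
By Plancherel's theorem on $\Z^d$, the operator norm of $S_\theta$ equals the $L^\infty(\T^d)$-norm of its multiplier, so $\|S_\theta f\|_{\ell^2}\le \sup_{\bm\xi\in\T^d}|K(\theta;\bm\xi)|\cdot\|f\|_{\ell^2}$. Substituting this bound yields the claimed estimate, and then a density argument removes the restriction to finitely supported $f$.

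I do not expect any serious obstacle here: the proof is essentially a combination of Minkowski's integral inequality and Plancherel's theorem, once one notices that the $\lambda$-dependence enters only through a unimodular phase that can be discarded after taking absolute values. The only minor technical point is to justify the Fubini interchange that produces the pointwise representation of $T_\lambda f$; this is standard provided we first verify the hypotheses on Schwartz-class or finitely supported $f$ and then extend by density using the operator-norm bound we have just derived.
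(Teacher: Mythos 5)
Your proof is correct and is essentially the same as the paper's: the paper exchanges the order of integration, bounds $|(T_\lambda f)(\mathbf x)|$ by $\int_X |g(\theta;\mathbf x)|\,d\mu(\theta)$ (your $g(\theta;\mathbf x) = (S_\theta f)(\mathbf x)$), applies Minkowski's integral inequality, and then uses Bessel/Plancherel together with the $L^\infty$ bound on $K(\theta;\cdot)$ exactly as you do. The only cosmetic difference is that you give the operator $S_\theta$ a name; one small slip is that the $L^1$ hypothesis is $K(\cdot;\bm\xi)\in L^1(X,\mu)$ (integrability in $\theta$, not in $\bm\xi$), which is what the Fubini step actually needs.
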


\begin{proof}
Suppose that $f \in \ell^2(\Z^d)$. We first exchange the order of integration to get 
\begin{align*} 
|(T_\lambda f)(\mathbf x)| &= \bigg| \int_{\T^d} \int_{X} K(\theta; \bm\xi) \widehat{f}(\bm\xi) e(\Phi(\lambda,\theta) - \mathbf x \cdot \bm\xi) \, d\mu(\theta) d\bm\xi \bigg| \\
&\leq \int_{X} \bigg| \int_{\T^d} K(\theta;\bm\xi) \widehat{f}(\bm\xi) e(-\mathbf x \cdot \bm\xi) \, d\bm\xi \bigg| \, d\theta =: \int_{X} |g(\theta; \mathbf x)| \, d\mu(\theta). 
\end{align*}
Note that since the last integral is independent of $\lambda$, the same bound holds for $(T_*f)(\mathbf x)$. Consequently,
\begin{align*}
\| T_*f \|_{\ell^2(\Z^d)} &\le \bigg\| \int_{X} |g(\theta; \cdot)| \, d\mu(\theta) \bigg\|_{\ell^2(\Z^d)} \\
&\le  \int_{X}  \bigg\{ \sum_{\mathbf x \in \mathbb Z^d} |g(\theta; \mathbf x)|^2 \bigg\}^{1/2} \, d\mu(\theta) \\
&\le  \int_{X} \bigg\{ \int_{\T^d} | K(\theta;\bm\xi) \widehat{f}(\bm\xi) |^2 \, d\bm\xi \bigg\}^{1/2} \, d\mu(\theta) \\
&\le \int_{X} \sup_{\bm\xi \in \T^d} | K(\theta;\bm\xi) | \big \| \widehat f \big\|_{L^2(\T^d)} \, d\mu(\theta),
\end{align*}
on using Minkowski's and Bessel's inequalities. The lemma follows by applying Plancherel's theorem to $f$ and $\hat f$.
\end{proof}

Let $\lambda \in \Gammakn \cap [\Lambda, 2\Lambda]$. Suppose that $N^k \ge \lambda$ and write $L = \log N$. By orthogonality,
\begin{align}
R(\lambda)\ZFT{\omega_\lambda}(\bm\xi) 
&= \sum_{1 \le \mathbf p \le N} (\log\mathbf p)e(\mathbf p \cdot \bm\xi) \int_{\mathbb T} e([\form(\mathbf p) - \lambda]\theta) \, d\theta \notag\\
&= \int_{\mathbb T} \bigg\{ \prod_{j=1}^n S_N(\theta, \xi_j)  \bigg\} e(-\lambda \theta) \, d\theta =: \int_{\mathbb T} F(\theta; \bm\xi) e(-\lambda \theta) \, d\theta, \label{eq:HL integral} 
\end{align} 
where $S_N(\theta, \xi)$ is the exponential sum defined in \eqref{eq2.S}. To analyze the last integral, we partition the torus into major and minor arcs. Let $Q = L^C$, where $C > 0$ is a sufficiently large constant to be described later. We set $\mathfrak M = \mathfrak{M}(N^k, Q)$ and $\mathfrak m = \mathfrak {m}(N^k,Q)$. 

\subsection{The minor arc contribution}
\label{sec3.1}

The minor arc contribution to the integral \eqref{eq:HL integral} will be part of the error term in the \approximationformula. Let 
\[ \widehat{E_1}(\bm\xi;\lambda) = R(\lambda)^{-1}\int_{\mathfrak m} F(\theta; \bm\xi) e(-\lambda \theta) \, d\theta. \]
Since $R(\lambda) \gtrsim N^{n-k}$ for $\lambda \in \Gammakn$, the estimate \eqref{eq:dyadic_error_bound} for $\widehat{E_1}$ will follow from Lemma \ref{lem:L2 bound} if we show that for any $B>1$, we have 
\begin{equation}\label{minor.1} 
\int_{\mathfrak m} \sup_{\bm\xi \in \mathbb T^n}  |F(\theta; \bm\xi)| \, d\theta \lesssim_B N^{n-k}L^{-B}. 
\end{equation}
When $\theta \in \mathfrak m$, it has a rational approximation $a/q$ such that $Q \le q \le N^kQ^{-1}$, $(a,q)=1$ and $|q\theta - a| < q^{-1}$. By Lemma \ref{lem:Harman} with $f(x) = \theta x^k + \xi x$, we have
\begin{equation}\label{minor.2}
\sup_{(\theta, \xi) \in \mathfrak m \times \mathbb T}  |S_N( \theta, \xi) | \lesssim NQ^{-2^{1-2k}}L^{c_k},
\end{equation}
where $c_k$ is the constant in the statement of Lemma~\ref{lem:Harman}. Using this bound and H\"older's inequality, we get
\[ \int_{\mathfrak m} \sup_{\bm\xi \in \mathbb T^n}  |F(\theta; \bm\xi)| \, d\theta \lesssim NQ^{-2^{1-2k}}L^{c_k} \int_{\mathbb T} \sup_{\xi \in \mathbb T}  |S_N(\theta; \xi)|^{n-1} \, d\theta. \]
Hence, when $n \ge n_1(k)$, we obtain from Lemma \ref{lem:maxBourgain} that 
\[ \int_{\mathfrak m} \sup_{\bm\xi \in \mathbb T^n}  |F(\theta; \bm\xi)| \, d\theta \lesssim N^{n-k}Q^{-2^{1-2k}}L^{n+c_k}. \]
We can therefore choose $C_1 = C_1(B,k,n) > 0$ such that when $C \ge C_1$ in the definition of $Q$, the last inequality yields \eqref{minor.1}.

\subsection{The major arc contribution, I}

Let $R = Q^3$ and define 
\[ \mathfrak R = \mathfrak M(N, R), \quad \mathfrak N = \mathfrak M(N, Q), \quad \mathfrak r = \mathfrak m(N, R), \quad \mathfrak n = \mathfrak m(N, Q). \] 
We will show that when $\bm\xi \notin \mathfrak{N}^n$, the contribution of the major arcs $\mathfrak{M}$ to the integral \eqref{eq:HL integral} can be estimated similarly to the minor arc contribution.

Suppose that $\theta \in \mathfrak M_{a/q}$ and write $\delta = \theta - a/q$. Then, by partial summation,
\begin{align}\label{minor.3}
  |S_N(\theta, \xi)| &\le \sum_{h \in \mathbb U_q} \bigg| \sum_{\substack{p \le N\\p \equiv h \bmod q}} e(\delta p^k + \xi p) \bigg| + q \notag\\
  &\lesssim q(1 + N^k|\delta|) \sup_{M,h} \bigg| \sum_{\substack{p \le M\\p \equiv h \bmod q}} e(\xi p) \bigg|,
\end{align}
where the supremum is over $2 \le M \le N$ and $h\in \mathbb U_q$. When $\xi \in \mathfrak r$, it has a rational approximation $b/r$ such that 
\begin{equation}
R \le r \le NR^{-1}, \quad (b,r)=1, \quad |r\xi - b| \le RN^{-1}.
\end{equation} 
Hence, we may use Lemma \ref{lem:Balog} to show that
\begin{equation}\label{minor.4a}
\sup_{\theta \in \mathfrak M}  |S_N( \theta, \xi) | \lesssim R^{-1/2}NQL^3 \lesssim NQ^{-1/3}.
\end{equation}
On the other hand, if $\xi \in \mathfrak{R}_{b/r}$ for some major arc in $\mathfrak R$, Lemma \ref{lem:major arc approx} yields
\[ S_N(\theta,\xi) = g(a,q; b,r) I_N(\delta, \eta) + O\big( NQ^{-10} \big), \]
where $\eta = \xi - b/r$. When $\xi \notin \mathfrak N$, we have either $r \ge Q$ or $r|\eta| \ge QN^{-1}$. When $r \ge Q$, \eqref{eq2.3} yields 
\[ g(a,q; b,r) \lesssim Q^{-1/2+\eps}, \]
and when $r \le Q$ and $r|\eta| \ge QN^{-1}$, \eqref{eq2.3} and \eqref{eq2.5} yield
\[ g(a,q; b,r)I_N(\delta, \eta) \lesssim r^{-1/2+\eps}(N/|\eta|)^{1/2} \lesssim NQ^{-1/2+\eps}. \]
We conclude that inequality \eqref{minor.4a} holds whenever $\xi \notin \mathfrak{N}$. 

Thus, unless $\bm\xi \in \mathfrak{N}^n$, we have the bound \eqref{minor.4a} for some exponential sum $S_N(\theta, \xi_j)$. Using that bound in place of \eqref{minor.2} in the argument of \S\ref{sec3.1}, we conclude that when $C \ge C_2(B, n, k)$ in the definition of $Q$, the estimate \eqref{eq:dyadic_error_bound} holds for 
\[ \widehat{E_2}(\bm\xi; \lambda) = R(\lambda)^{-1}\Psi(\bm\xi)\int_{\mathfrak M} F(\theta; \bm\xi) e(-\lambda \theta) \, d\theta, \]
where $\Psi(\bm\xi)$ is any bounded function that is supported outside $\mathfrak N^n$. In particular,  the above inequality holds for 
\[ \Psi(\bm\xi) = 1 - \sum_{1 \le \mathbf q \le Q} \sum_{\mathbf a \in \mathbb U_{\mathbf q}} \psi_{N/Q}(\bfq\bm\xi - \bfa), \]
where $\psi$ is the bump function appearing in the statement of the \approximationformula.

\subsection{The major arc contribution, II}
\label{sec:3.3}

We now proceed to approximate the contribution of the major arcs to \eqref{eq:HL integral} when $\bm\xi$ lies close to $\mathfrak{N}^n$. For vectors $\bfa, \bfq$ with $1 \le \bfq \le Q$ and $\bfa \in \mathbb U_{\bfq}$, let $\bm{\mathfrak N}_{\bfa/\bfq}$ denote the support of $\psi_{N/Q}(\bfq\bm\xi - \bfa)$, and let $\bm{\mathfrak N}$ denote the union of all the different sets $\bm{\mathfrak N}_{\bfa/\bfq}$. Suppose that $\bm\xi = (\xi_1, \dots, \xi_n) \in \bm{\mathfrak N}_{\bfa/\bfq}$. When $\theta \in \mathfrak{M}_{a/q}$, we write $\delta = \theta - a/q$ and $\eta_j = \xi_j - a_j/q_j$. By Lemma \ref{lem:major arc approx}, 
\[ S_N(\theta,\xi_j) = g(a,q; a_j,q_j) I_N(\delta, \eta_j) + O\big( NQ^{-20} \big). \]
Since the major arcs are disjoint, we may define the function
\[ F^*(\theta; \bm\xi) = \prod_{j=1}^n g(a,q; a_j,q_j) I_N(\delta, \eta_j) \]
on all of $\mathfrak M \times \bm{\mathfrak N}$. This function satisfies
\[ \sup_{(\theta, \bm\xi) \in \mathfrak{M} \times \bm{\mathfrak N}} |F(\theta; \bm\xi) - F^*(\theta; \bm\xi)| \lesssim N^nQ^{-20}. \]
Since $|\mathfrak{M}| \lesssim QN^{-k}$, we can use the above inequality and Lemma \ref{lem:L2 bound} to show that \eqref{eq:dyadic_error_bound} holds for the error term
\[ \widehat{E_3}(\bm\xi; \lambda) = R(\lambda)^{-1} \sum_{1 \le \mathbf q \le Q} \sum_{\mathbf a \in \mathbb U_{\mathbf q}} \psi_{N/Q}(\bfq\bm\xi - \bfa)\int_{\mathfrak M} \big[ F(\theta; \bm\xi) - F^*(\theta; \bm\xi) \big] e(-\lambda \theta) \, d\theta. \]

By \eqref{eq:HL integral} and the above analysis, we have
\begin{equation}\label{eq:approx omega1}  
\ZFT{\omega_\lambda}(\bm\xi) = R(\lambda)^{-1}\sum_{1 \le \mathbf q \le Q} \sum_{\mathbf a \in \mathbb U_{\mathbf q}} \psi_{N/Q}(\bfq\bm\xi - \bfa) \int_{\mathfrak M} F^*(\theta; \bm\xi) e(-\lambda \theta) \, d\theta + \widehat{E_4}(\bm\xi; \lambda), 
\end{equation}
with an error term $\widehat{E_4}(\bm\xi; \lambda)$ that satisfies \eqref{eq:dyadic_error_bound}. Next, let 
\[ \mathfrak M' = \bigcup_{q \le Q} \bigcup_{a \in \mathbb U_q} \big\{ \theta \in \mathbb T : |\theta - a/q| \le QN^{-k} \big\}. \]
We want to extend the integral on the right side of \eqref{eq:approx omega1} to the set $\mathfrak{M'}$. The hypothesis on $n$ implies readily that $n \ge 3k$. We now apply once again Lemma \ref{lem:L2 bound} together with the inequality 
\begin{align*} 
\int_{\mathfrak M' \setminus \mathfrak M} \sup_{\bm\xi \in \bm{\mathfrak N}} |F^*(\theta; \bm\xi)| \, d\theta &\lesssim \sum_{q \le Q} \sum_{1 \le a \le q} q^{-n/2+\eps} \int_{Q/(qN^k)}^{\infty} \frac {N^n \, d\delta}{(1 + N^k \delta)^{n/k}} \\
&\lesssim Q^{2-n/k+\eps}N^{n-k} \lesssim Q^{-1+\eps}N^{n-k}, 
\end{align*}
where we have used \eqref{eq2.3} and \eqref{eq2.4}. Combining these estimates and \eqref{eq:approx omega1}, we obtain
\[ \ZFT{\vonMangoldtmeasure_\la}(\bm\xi) = R(\lambda)^{-1}\sum_{1 \le \mathbf q \le Q} \sum_{\mathbf a \in \mathbb U_{\mathbf q}} \psi_{N/Q}(\bfq\bm\xi - \bfa) \int_{\mathfrak M'} F^*(\theta; \bm\xi) e(-\lambda \theta) \, d\theta + \widehat{E_5}(\bm\xi; \lambda), \]
with an error term $\widehat{E_5}(\bm\xi; \lambda)$ that satisfies \eqref{eq:dyadic_error_bound}.

We now identify
\begin{equation}\label{eq:F-star}
\int_{\mathfrak M'} F^*(\theta; \bm\xi) e(-\lambda \theta) \, d\theta
\end{equation}  
as an integral over a subset of $\mathbb Q \times \mathbb R$ with respect to the product measure $\mu(r,\delta) = \nu(r) \times d\delta$, where $\nu$ is the counting measure on $\mathbb Q$ and $d\delta$ is the Lebesgue measure on $\mathbb R$.
Then one final appeal to Lemma \ref{lem:L2 bound} allows us to replace \eqref{eq:F-star} by 
\begin{equation}\label{eq:major-final} 
\sum_{q = 1}^{\infty} \sum_{a \in \mathbb U_q} \int_{\R} \bigg\{ \prod_{j=1}^n g(a,q; a_j,q_j) I_N(\delta, \eta_j) \bigg\} e(-\lambda(a/q + \delta)) \, d\delta. 
\end{equation}
This step requires an estimate for the quantity
\begin{equation}\label{eq:F-star-prime} 
\bigg\{ \sum_{\substack{a,q\\q > Q}} \int_{\R} + \sum_{a,q} \int_{|\delta| \ge QN^{-k}} \bigg\} \sup_{\bm\xi \in \bm{\mathfrak N}} \prod_{j=1}^n |g(a,q; a_j,q_j) I_N(\delta, \eta_j)| \, d\delta.
\end{equation}
Using \eqref{eq2.3} and \eqref{eq2.4}, we can bound the quantity \eqref{eq:F-star-prime} by 
\[ \sum_{q > Q} q^{-2}\int_{\R} \frac {N^n \, d\delta}{(1 + N^k|\delta|)^{n/k}} + \sum_{q=1}^{\infty} q^{-2}\int_{QN^{-k}}^{\infty} \frac {N^n \, d\delta}{(1 + N^k\delta)^{n/k}} \lesssim Q^{-1}N^{n-k}. \] 
We remark that the integral \eqref{eq:major-final} equals
\( \singseries\mathfrak I_{\lambda}(\bm\eta), \)
where
\[ \mathfrak I_{\lambda}(\bm\eta) = \int_{\R} \bigg\{ \prod_{j=1}^n I_N(\delta, \eta_j) \bigg\} e(-\lambda\delta) \, d\delta. \]
Hence,
\begin{equation}\label{eq:approx omega2}  
\ZFT{\vonMangoldtmeasure_\lambda}(\bm\xi) = R(\lambda)^{-1}\sum_{1 \le \mathbf q \le Q} \sum_{\mathbf a \in \mathbb U_{\mathbf q}} \singseries \psi_{N/Q}(\bfq\bm\xi - \bfa) \mathfrak I_{\lambda}(\bm\xi - \bfa/\bfq) + \widehat{E_\lambda}(\bm\xi), 
\end{equation}
an error term $\widehat{E_\lambda}(\bm\xi)$ that satisfies \eqref{eq:dyadic_error_bound}. To complete the proof of Theorem \ref{thm:approximation_formula}, we note that by the discussion on p. 498 in \cite{Stein93} (see also \S3.1 in \cite{Hughes_Vinogradov}), one has
\begin{align}\label{eq:major arcs 3}
\mathfrak{I}_\lambda(\bm\eta) &= \int_{\R} \int_{\R^n} \mathbf 1_{[0,N]^n}(\mathbf x)e(\bm\eta \cdot \mathbf x) e(\delta(\mathfrak{f}(\mathbf x) - \lambda)) \, d\mathbf x d\delta \notag\\
&= \lambda^{n/k-1} \int_{\R^n} \mathbf 1_{[0,N]^n}(\mathbf x)e(\bm\eta \cdot \mathbf x) \, d\sigma_\lambda(\mathbf x) 
=: \widetilde{d\sigma_{\lambda}}(\bm\eta), 
\end{align}
since the surface measure $d\sigma_{\lambda}$ is supported in the cube $[0,N]^n$.

\subsection{Remarks on the proof of \eqref{eq:sup_error_bound}}

We now take a moment to substantiate our claim, made in the introduction, that the $L^\infty$-bound \eqref{eq:sup_error_bound} holds under a weaker assumption on the dimension. The key observation is that to prove \eqref{eq:sup_error_bound} one does not need to refer to Lemma~\ref{lem:L2 bound}, and therefore, the inequality 
\begin{equation}\label{minor.1A} 
\int_{\mathfrak m} |F(\theta; \bm\xi)| \, d\theta \lesssim N^{n-k}L^{-B} 
\end{equation}
can replace \eqref{minor.1} in the treatment of the minor arcs. We can now use \eqref{minor.2} and the trivial majorization
\[ \int_{\T} |S_N(\theta, \xi)|^{2s} \, d\theta \lesssim L^{2s} \int_{\T} \bigg| \sum_{m \le N} e(\theta m^k) \bigg|^{2s} \, d\theta \]
to deduce \eqref{minor.1A} from the results in \S6 of \cite{Bourgain_hua}, provided that $k \ge 5$ and $n \ge n_0(k)$. When $k \le 4$, the same conclusion follows a sharp form of Hua's lemma, such as Lemma 1 in Vaughan~\cite{Vaughan_hua}. 

\section{Estimation of the main term contribution}
\label{sec:main_term}

In this section, we consider the maximal function of the convolution operator whose multiplier is the main term in the approximation formula. Given a sufficiently large $\lambda \in \Gammakn$, let $j$ be the unique integer such that $2^{j-1} \le \lambda < 2^{j}$. Let $M_{\lambda}$ denote the convolution operator with Fourier multiplier 
\[ \widehat{M_\lambda}(\bm\xi) = \sum_{q=1}^\infty \sum_{a\in \mathbb U_q} e\left( -\lambda a/q \right) \sum_{\bfq \leq Q} \widehat{M_\lambda^{a/q; \mathbf q}}(\bm\xi), \]
where 
\[ \widehat{M_\lambda^{a/q; \mathbf q}}(\bm\xi) :=  \sum_{\mathbf a \in \mathbb U_{\bfq}} \bigg\{ \prod_{i=1}^\dimension g(a, q; a_i, q_i) \bigg\} \psi_{N/Q}(\bfq\bm\xi-\bfa) \widetilde{d \sigma_{\lambda}}(\bm\xi-\mathbf a/\mathbf q), \]
with $N = 2^{j/k}$, $Q = (\log N)^C$ for some large fixed $C>0$. We write $M_*$ for the maximal operator defined pointwise as 
\[ M_*f(\mathbf x) := \sup_{\lambda \in \Gammakn} |M_\lambda f(\mathbf x)|. \]
Our main objective in this section is to prove the following theorem.

\begin{thm}\label{thm:main_term}
Let $\degree \geq 2$. If $\dimension \geq \max\{ 5, \degree/2+2 \}$ and $p > \frac{\dimension}{\dimension-2}$, then the maximal operator $M_*$ is bounded on $\ell^p(\Z^\dimension)$.
\end{thm}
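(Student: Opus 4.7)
The strategy adapts the Magyar--Stein--Wainger (MSW) paradigm to our setting, where the novelty is that the rationals $\mathbf a/\mathbf q$ appearing in $\widehat{M_\lambda}$ carry per-coordinate denominators $q_i$ coupled to the $k$th-power denominator $q$ via the mixed Gauss sum $g(a,q;a_i,q_i)$. The first step is to decompose $\widehat{M_\lambda} = \sum_{\mathbf q \leq Q} \widehat{M_\lambda^{(\mathbf q)}}$ by the vector $\mathbf q$ and to group the $\mathbf q$ into dyadic boxes. For each fixed $\mathbf q$, the multiplier $\widehat{M_\lambda^{(\mathbf q)}}$ is a sum of disjointly-supported bumps centered at $\mathbf a/\mathbf q$ with arithmetic weights $G_\lambda(\mathbf a, \mathbf q)$; by Lemma \ref{lem:g-sum}(i)-(ii), only $\mathbf q$ compatible with $q$ give nonzero contribution, and the remaining weights can be rewritten in a normalized form suited to later estimation.

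The second step is an $\ell^2$-bound with polynomial decay in $\max_i q_i$. By Plancherel's theorem and the disjointness of the bump supports, the $\ell^2 \to \ell^2$ norm of $M_*^{(\mathbf q)}$ is controlled by $\sup_{\lambda, \bm\xi} |G_\lambda(\mathbf a, \mathbf q) \widetilde{d\sigma_{\lambda_0}}(N(\bm\xi-\mathbf a/\mathbf q))|$. Applying the Weyl--Shparlinski bound $g(a,q;a_i,q_i) \lesssim [q,q_i]^{-1/2+\eps}$ from Lemma \ref{lem:Shparlinski} to each factor of $G_\lambda$, combined with the stationary-phase decay $|\widetilde{d\sigma_{\lambda_0}}(\xi)| \lesssim (1+|\xi|)^{-(n-1)/2}$ (valid since $\form(\mathbf x) = \lambda_0$ has nonvanishing Gaussian curvature in the positive orthant) and using the hypothesis $n \geq (k-1)^2+1$, one obtains a bound of the form $\|M_*^{(\mathbf q)}\|_{\ell^2 \to \ell^2} \lesssim |\mathbf q|^{-\delta}$ for some $\delta = \delta(n,k)>0$ summable over $\mathbf q$.

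The third step, which is also the main obstacle, is an $\ell^p$-bound for each $M_*^{(\mathbf q)}$ valid for $p$ slightly larger than $n/(n-1)$ and at most polynomially growing in $\mathbf q$. We extend the MSW transference principle to accommodate the anisotropic scaling $\mathbf q \bm\xi - \mathbf a$: the operator $M_\lambda^{(\mathbf q)}$ decomposes, up to arithmetic factors, as a composition of a continuous Fourier multiplier (the dyadic maximal operator associated to the surface $\form(\mathbf x) = \lambda_0$, bounded on $L^p(\mathbb R^n)$ for $p > n/(n-1)$ by Stein's spherical maximal theorem) with an operator encoding the finite group structure modulo $\operatorname{lcm}(\mathbf q)$. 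Here the $\ell^1$-type bound of Lemma \ref{lem:g-sum}(iii) on $\sum_u |\sum_b g(a,q;b,r)e(-ub/r)|$ is essential: it controls the aggregate arithmetic mass introduced by the linear denominators without losing more than polynomially in $\mathbf q$, whereas MSW's scalar-denominator framework handled a single Gauss sum directly. Interpolating this $\ell^p$-bound against the $\ell^2$ decay from Step 2 and summing over $\mathbf q \leq Q$ then yields boundedness of $M_*$ on $\ell^p(\mathbb Z^n)$ for all $p > n/(n-2)$, matching the MSW threshold.
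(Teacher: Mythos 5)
Your high-level skeleton --- splitting $\widehat{M_\lambda}$ by $\mathbf q$ into dyadic boxes, factoring each piece into an arithmetic part and an analytic part, invoking the MSW transference, interpolating an $\ell^1$ estimate (via Lemma~\ref{lem:g-sum}(iii)) against an $\ell^2$ estimate, and summing --- tracks the paper's strategy. However, two of the specific ingredients you name are wrong for $k\ge 3$, and these are not cosmetic issues: they are exactly the places where the paper has to do something new.

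First, you claim $|\widetilde{d\sigma_{\lambda_0}}(\bm\xi)|\lesssim(1+|\bm\xi|)^{-(n-1)/2}$ because the surface $\form(\mathbf x)=\lambda_0$ ``has nonvanishing Gaussian curvature in the positive orthant.'' The Hessian is $k(k-1)\,\mathrm{diag}(x_i^{k-2})$, which for $k\ge 3$ degenerates as any $x_i\to 0$, and the Gelfand--Leray measure defined through $I_N(\delta,\eta)=\int_0^N e(\delta x^k+\eta x)\,dx$ is \emph{not} cut off away from the coordinate hyperplanes. The correct decay (cf.\ Bruna--Nagel--Wainger~\cite{BNW}) is of order $(n-1)/k$, not $(n-1)/2$. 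This matters: it is precisely this $(n-1)/k$ decay, fed into Proposition~\ref{appendixmaxinfinity}, that forces the dimensional hypothesis $n\ge(k-1)^2+1$. With $(n-1)/2$ decay that hypothesis would be vacuous, which should itself be a red flag. Moreover, in the paper's argument the $\ell^2$ gain in $\mathbf q$ (Lemma~\ref{lemma:main_term:ell2}) comes \emph{entirely} from the Gauss sum bound and Plancherel; the decay of $\widetilde{d\sigma}$ plays no role there.

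Second, and more seriously, in Step~3 you invoke ``Stein's spherical maximal theorem'' to assert $L^p(\R^n)$-boundedness of the continuous maximal operator for $p>n/(n-1)$. For $k\ge 3$ this is false: as the paper notes (see the remark after Theorem~\ref{thm:main_term} and \cite{Hughes_Vinogradov}), the unmollified $k$-spherical maximal function is \emph{unbounded} on $L^p(\R^n)$ for $p\le n/(n-k)$. Running your argument as written would bottleneck at $p>n/(n-k)$, which is worse than $n/(n-2)$ once $k\ge 3$. The paper's key observation --- Lemma~\ref{lemma:analytic_part} together with Propositions~\ref{proposition:log_approximate_identity} and~\ref{appendixmaxinfinity} --- is that one should \emph{not} take the raw surface measure but the mollified version $\widecheck{\psi_{\lambda^{1/k}(\log\lambda)^{-C}}}\star d\sigma_\lambda$: this kernel decays like $(\log\lambda)^C(1+|x\lambda^{-1/k}|)^{-N}$, so its maximal function is dominated by the Hardy--Littlewood maximal function up to a polylogarithmic loss, which is then absorbed by restricted-weak-type interpolation against the $L^2$ bound of Proposition~\ref{appendixmaxinfinity}. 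The result is boundedness for \emph{all} $p>1$, so the analytic piece imposes no constraint and the final range $p>n/(n-2)$ comes purely from the arithmetic piece (the sum over $q$ of $q^{1-n/p'}$ requiring $n/p'>2$). Without the mollification insight, your Step~3 would fail for $k\ge 3$ and the theorem's degree-independent range of $p$ would not be recovered.
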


\begin{remark}
Note that $\dimension_1(\degree),\dimension_2(\degree) \geq \degree/2+2$ so that these restrictions on the dimension $\dimension$ dominate in Theorem~\ref{mainmaxfunction}. 
In terms of the exponent $p$, our range of $\ell^p$-spaces is independent of the degree $\degree \geq 2$ and match those of the quadratic case (when $\degree=2$) for the integral spherical maximal function of Magyar, Stein and Wainger \cite{MSW}. 
In contrast, from \cite{Hughes_Vinogradov} we know that the integral $\degree$-spherical maximal functions of Magyar \cite{Magyar_dyadic} are unbounded on $L^p(\R^\dimension)$ for $p \leq \frac{\dimension}{\dimension-\degree}$ for each $\degree \geq 3$. The difference is that in our current setup the analytic piece of the operator (see below) is more localized in Fourier space than it is in previous works; this improves its boundedness properties. 
\end{remark}
 
If $\mathcal D \subset \R_+^n$, we introduce the maximal functions 
\[ M^{a/q;\mathcal D}_*f(\mathbf x) := \sup_{\lambda \in \Gammakn} \bigg| \sum_{\substack{\bfq \le Q\\ \bfq \in \mathcal D}} M^{a/q; \bfq}_\lambda f(\mathbf x) \bigg|, \]
so that we have the pointwise inequality
\begin{equation}\label{eq:mainterm:triangle_inequality}
M_*f(\mathbf x) \leq \sum_{q=1}^\infty \sum_{a\in \mathbb U_q} \sum_{\mathbf j \in \Z_+^n} M^{a/q;\mathcal D_{\mathbf j}}_*f(\mathbf x), 
\end{equation}
where $\mathcal D_{\mathbf j} = \big\{ \mathbf x \in \R^n : 2^{j_i-1} \le x_i < 2^{j_i}, \; 1 \le i \le n \big\}$. Applying the triangle inequality on $\ell^p(\Z^\dimension)$ in \eqref{eq:mainterm:triangle_inequality}, we see that 
\begin{equation}\label{eq:mainterm:1} 
\| M_*f \|_{\ell^p(\Z^\dimension)} \leq \sum_{q=1}^\infty \sum_{a\in \mathbb U_q} \sum_{\mathbf j \in \Z_+^n} \big\| M^{a/q;\mathcal D_{\mathbf j}}_*f \big\|_{\ell^p(\Z^\dimension)}. 
\end{equation}

Next, we estimate $\big\| M^{a/q;\mathcal D}_*f \big\|_{\ell^p(\Z^\dimension)}$ for a fixed rational number $a/q$ and a dyadic box $\mathcal D$. Suppressing the dependence on $a/q$, we write $M_\lambda^{\bfq}$ for the convolution operator $M_\lambda^{a/q;\bfq}$. Similarly to \cite{Bourgain_maximal_ergodic,MSW}, we first decompose each Fourier multiplier $\widehat{M_\lambda^{\mathbf q}}$ into an analytic piece and an arithmetic piece. Let $\psi$ be the bump function from the statement of the \approximationformula. For $\bfq \in \Z_+^\dimension$, we define the function $\Psi_{\bfq}(\bm\xi) = \psi(16\bfq\bm\xi)$ and note that, when $\lambda$ is large and $\bfq \le Q$, one has 
\[ \psi_{N/Q}(\bfq\bm\xi-\bfa) = \psi_{N/Q}(\bfq\bm\xi-\bfa)\Psi_\bfq(\bfq\bm\xi-\bfa). \] 
We also write
\[ G(\bfa) = G(a,q; \bfa, \bfq) := \prod_{i=1}^n g(a, q; a_i, q_i). \] 
We now define the Fourier multipliers
\begin{gather*}
\widehat{S^{\bfq}}(\bm\xi) := \sum_{\mathbf a \in \mathbb U_{\bfq}} G(a,q; \bfa, \bfq) \Psi_{\bfq}(\bfq\bm\xi-\bfa), \\
\widehat{T_\lambda^{\bfq}}(\bm\xi) := \sum_{\bfa \in \Z^n} \psi_{N/Q}(\bfq\bm\xi-\bfa) \widetilde{d\sigma}_{\lambda}(\bm\xi-\mathbf a/\mathbf q), 
\end{gather*}
so that 
\[ \widehat{M_\lambda^{\bfq}}(\bm\xi) = \widehat{T_\lambda^{\bfq}}(\bm\xi) \widehat{S^{\bfq}}(\bm\xi) . \]
Hence, 
\begin{equation}\label{eq:mainterm:2} 
\left\| M^{a/q;\mathcal D}_*f \right\|_{\ell^p(\Z^\dimension)} \le \sum_{\bfq \in \mathcal D} \big\| T_*^\bfq(S^\bfq f) \big\|_{\ell^p(\Z^\dimension)},
\end{equation}
where the maximal function $T_*^\bfq$ is defined by
\[ T^{\bfq}_*f(\mathbf x) := \sup_{\lambda \in \Gammakn} |T^{\bfq}_\lambda f(\mathbf x)|. \]

The estimation of the sum on the right side of \eqref{eq:mainterm:2} is broken into three lemmas. First, we note that when $\bfq \le Q$, the supports of the functions $\psi_{N/Q}(\bfq\bm\xi-\bfa)$ are disjoint, which puts the multipliers $T_\lambda^\bfq$ and $T_*^\bfq$ into the form considered by Magyar, Stein and Wainger in Section~2 of \cite{MSW}. In particular, Corollary 2.1 in \cite{MSW} allows us to transfer the bound in the next lemma to the maximal operators $T_*^{\bfq}$. (There is a technical difference in that our $\bfq$ is composed of different $q_i$; that is, in Magyar--Stein--Wainger they consider $\bfq=(q,\dots,q)$ whereas we are considering more general $\bfq$ where often $q_i \neq q_j$ for $i \neq j$. This however does not present a problem as we apply the Magyar--Stein--Wainger transference principle in each variable separately).

\begin{lemma}\label{lemma:analytic_part}
If $\dimension \ge \degree/2+2$ and $p > 1$, the maximal operator 
\[ T_*f(\mathbf x) := \sup_{\lambda \in \mathbb N} |f \star (\check{\psi_{\lambda^{1/\degree}(\log \lambda)^{-C}}} \star d\sigma_{\lambda})(\mathbf x)| \]
is bounded on $L^p(\R^\dimension)$.
\end{lemma}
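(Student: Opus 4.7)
The plan is to adapt the strategy of Magyar, Stein, and Wainger \cite{MSW} to the Waring hypersurface. Concretely, I would show that the convolution kernel $K_\la := \check{\psi_h} \star d\sigma_\la$, with $h = \la^{1/k}(\log \la)^{-C}$, is dominated pointwise by a Schwartz bump of $L^1$-mass $O(1)$ at physical scale $\la^{1/k}$. This reduces the $L^p$-boundedness of $T_*$ to the $L^p$-boundedness of the Hardy--Littlewood maximal operator $M$ on $\R^n$, which holds for every $p > 1$.

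First I would record the ingredients. The function $\check{\psi_h}(x) = h^{-n} \check{\psi}(x/h)$ is Schwartz, concentrated at the origin on scale $h \sim \la^{1/k}$, with $\|\check{\psi_h}\|_{L^1} = O(1)$. The Gelfand--Leray measure $d\sigma_\la$ on $\{\form(x) = \la\} \cap \R_+^n$ is supported in $B(0, C\la^{1/k})$ and, with the natural normalization, has total mass of order one. Consequently $K_\la$ is a smooth function with $\|K_\la\|_{L^1} = O(1)$. The core of the argument is then the pointwise estimate
\[
|K_\la(x)| \lesssim \la^{-n/k}\, \Phi(x/\la^{1/k}),
\]
for a fixed Schwartz function $\Phi$ on $\R^n$ independent of $\la$. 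Once this is in hand, convolving with $f$ yields $|T_\la f(x)| \lesssim M f(x)$ uniformly in $\la$, whence $T_*f \lesssim Mf$ and the conclusion follows.

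The main obstacle is to prove this bound without the poly-logarithmic losses that a naive $L^\infty$-bound on $K_\la$ would introduce (factors of $(\log \la)^{Cn}$ that blow up as $\la \to \infty$, arising from the mismatch between $h$ and $\la^{1/k}$). To circumvent this, one would invoke the Fourier decay $|\widetilde{d\sigma_{\la_0}}(\eta)| \lesssim (1 + |\eta|)^{-\alpha}$ for some $\alpha > 0$, valid for $\la_0 \in [1,2]$ as soon as $n \ge (k-1)^2 + 1$. This is obtained via a stationary phase / van der Corput analysis of the oscillatory integral defining $\widetilde{d\sigma_{\la_0}}$ on the Waring hypersurface; the dimension condition provides enough non-degenerate directions to absorb the contribution of points where some coordinate $x_i$ vanishes, where the principal curvatures of the surface degenerate. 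Equipped with this decay, one splits the Fourier multiplier $\psi_h(\xi)\widetilde{d\sigma_\la}(\xi)$ into its value at $\xi = 0$ (yielding a scale-$\la^{1/k}$ mollifier dominated by $Mf$) plus an oscillatory remainder (controlled on $L^2$ via Plancherel together with a standard square-function argument in the parameter $\la$, using the Fourier decay to absorb the derivative in $\la$). Combining the pointwise $M$-estimate for the main piece with the $L^2$-maximal estimate for the remainder, and interpolating via Marcinkiewicz, one recovers the full range $p > 1$.
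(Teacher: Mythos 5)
The overall strategy you outline is in the right spirit and matches the paper's: use the Fourier decay of $\widetilde{d\sigma_{\lambda_0}}$ (which requires $n\geq(k-1)^2+1$) to control the frequency annulus where the mollifier $\psi_{\lambda^{1/k}(\log\lambda)^{-C}}$ exceeds the natural scale $\psi_{\lambda^{1/k}}$, split off a scale-$\lambda^{1/k}$ approximate identity that is dominated pointwise by the Hardy--Littlewood maximal function, and control the remainder on $L^2$ via Plancherel. (Your phrase ``its value at $\xi=0$'' is imprecise; the decomposition that works is $\psi_{\lambda^{1/k}(\log\lambda)^{-C}}=\psi_{\lambda^{1/k}}+(\psi_{\lambda^{1/k}(\log\lambda)^{-C}}-\psi_{\lambda^{1/k}})$, and the paper's $L^2$ control of the tail supremum goes through an $L^\infty$ bound for the \emph{sum} of multiplier differences over a dyadic block of $\lambda$'s rather than a square-function.)

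There is, however, a genuine gap in your final interpolation step. Having $\sup_\lambda|(\text{main})_\lambda f|\lesssim Mf$ on all $L^p$, $p>1$, and $\sup_\lambda|(\text{remainder})_\lambda f|$ bounded on $L^2$ only, does \emph{not} give $T_*$ on $L^p$ for $1<p<2$: a sum of two operators is bounded only on the \emph{intersection} of their known ranges, so naively you only recover $p\geq 2$. You cannot interpolate the pieces separately. The essential missing ingredient, which the paper supplies, is to split the supremum in $\lambda$ at a cutoff $\Lambda$. For $\lambda\leq\Lambda$ one has the pointwise bound $\lesssim(\log\Lambda)^C Mf$ (a polylogarithmic loss, but bounded in terms of $\Lambda$), while for $\lambda>\Lambda$ the $L^2$ estimate on the remainder carries a power saving $\Lambda^{-\sigma}$ with $\sigma=\frac{n-1}{k^2}-1+\frac{2}{k}>0$. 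Feeding these into a level-set estimate for $|\{T_*\mathbf{1}_F>\alpha\}|$ and choosing $\Lambda=\Lambda(\alpha)$ to balance $(\log\Lambda)^C|F|\alpha^{-1}$ against $(\log\Lambda)^C\Lambda^{-\sigma}|F|\alpha^{-2}$ yields restricted weak-type $(p,p)$ for each $p>1$, after which Marcinkiewicz interpolation gives the strong-type bound. Without this $\alpha$-dependent cutoff, the argument as you have written it does not produce the range $p>1$.
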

\noindent The proof of this lemma appears in the appendix.

From this lemma and Corollary 2.1 in \cite{MSW}, we deduce that
\[ \big\| T_*^\bfq(S^\bfq f) \big\|_{\ell^p(\Z^\dimension)} \lesssim \big\| S^\bfq f \big\|_{\ell^p(\Z^\dimension)}. \]
Thus, \eqref{eq:mainterm:2} yields
\begin{equation}\label{eq:mainterm:2a}
\left\| M^{a/q;\mathcal D}_*f \right\|_{\ell^p(\Z^\dimension)} \lesssim \sum_{\bfq \in \mathcal D} \big\| S^\bfq f \big\|_{\ell^p(\Z^\dimension)}.
\end{equation}
Note that Corollary 2.1 in \cite{MSW} requires an appropriate choice of Banach spaces in order to apply it, hence our chosen decomposition of the multiplier and the application of their Corollary 2.1 at this point in the proof.

\begin{lemma}\label{lemma:main_term:ell2}
Let $\mathcal D$ be either a dyadic box of the form $\mathcal D_{\mathbf j}$ above or a singleton in $\Z_+^n$. Then for all $a,q$ and $\eps>0$, one has
\begin{equation}
\sum_{\bfq \in \mathcal D} \big\| S^\bfq f \big\|_{\ell^2(\Z^\dimension)} \lesssim_\eps q^{\eps-\dimension/2} \bigg\{ \sum_{\bfq \in \mathcal D} w_q(\mathbf q)^{2-\eps} \bigg\}^{1/2}\| f \|_{\ell^2(\Z^n)}, 
\end{equation}
where
\[ w_q(\bfq) = \prod_{i=1}^\dimension \frac {(q,q_i)}{q_i} .\]
\end{lemma}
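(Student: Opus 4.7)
The plan is to bound each $\|S^\bfq f\|_{\ell^2}$ via Plancherel and then sum using Cauchy--Schwarz, exploiting a disjoint-support property on the Fourier side that holds precisely because $\bfq$ is confined to a single dyadic box $\mathcal D_\mathbf j$; without this structural input a naive Cauchy--Schwarz would lose a factor of $|\mathcal D|^{1/2}$.

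First I would analyze the Fourier support. Since $\psi$ is supported on $[-2,2]^n$, the bump $\Psi_\bfq(\bfq\bm\xi-\bfa) = \psi(16\bfq(\bfq\bm\xi - \bfa))$ is supported on the box $B_{\bfa,\bfq} := \{\bm\xi \in \T^n : |\xi_i - a_i/q_i| \le 1/(8q_i^2) \text{ for all } i\}$. The crucial observation is that when $\bfq,\bfq'$ lie in the common dyadic range $q_i,q_i' \in [2^{j_i-1},2^{j_i})$, any two distinct reduced fractions $a_i/q_i \ne a_i'/q_i'$ satisfy $|a_i/q_i - a_i'/q_i'| \ge (q_iq_i')^{-1} > 2^{-2j_i}$, while the combined one-dimensional support radius is $(8q_i^2)^{-1} + (8{q_i'}^2)^{-1} \le 2^{-2j_i}$. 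Hence all the boxes $\{B_{\bfa,\bfq}\}_{\bfq \in \mathcal D_\mathbf j,\, \bfa \in \mathbb U_\bfq}$ are pairwise disjoint. This yields two facts: $\|\widehat{S^\bfq}\|_{L^\infty(\T^n)} \le \max_{\bfa \in \mathbb U_\bfq} |F(\bfa, \bfq; a, q)|$ for each $\bfq$, and for distinct $\bfq \in \mathcal D_\mathbf j$ the supports $E_\bfq := \mathrm{supp}(\widehat{S^\bfq})$ are pairwise disjoint.

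Next I would estimate $|F(\bfa, \bfq; a, q)| = \prod_{i=1}^\dimension |g(a, q; a_i, q_i)|$. By Lemma~\ref{lem:g-sum}(i)--(ii), the $i$-th factor vanishes unless $q_{i,0} := q_i/(q,q_i)$ is coprime to $q$, in which case it equals $\mu(q_{i,0})/\varphi(q_{i,0})$ times the complete sum $g(aq_{i,0}^k, q;\, a_iq_0, q)$; since $(aq_{i,0}^k, q) = 1$, Lemma~\ref{lem:Shparlinski} bounds the latter by $O_\eps(q^{-1/2+\eps})$. Using $\varphi(q_{i,0})^{-1} \lesssim_\eps q_{i,0}^{\eps-1}$ together with the identity $\prod_i q_{i,0}^{-1} = w_q(\bfq)$ gives
\[ |F(\bfa, \bfq; a, q)| \lesssim_\eps q^{-\dimension/2 + \eps}\, w_q(\bfq)^{1-\eps}. \]

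Finally I would assemble the pieces. By Plancherel, $\|S^\bfq f\|_{\ell^2} \le \|\widehat{S^\bfq}\|_\infty \|\widehat f\, \mathbf 1_{E_\bfq}\|_{L^2}$, and Cauchy--Schwarz on the $\bfq$-sum gives
\[ \sum_{\bfq \in \mathcal D} \|S^\bfq f\|_{\ell^2} \le \bigg(\sum_{\bfq \in \mathcal D} \|\widehat{S^\bfq}\|_\infty^2\bigg)^{1/2} \bigg(\sum_{\bfq \in \mathcal D} \|\widehat f\, \mathbf 1_{E_\bfq}\|_{L^2}^2\bigg)^{1/2}. \]
Disjointness of $\{E_\bfq\}_{\bfq \in \mathcal D_\mathbf j}$ makes the second factor at most $\|f\|_{\ell^2}$, while the $|F|$-bound controls the first factor by a constant multiple of $q^{-\dimension/2+\eps}\bigl(\sum_\bfq w_q(\bfq)^{2-2\eps}\bigr)^{1/2}$; renaming $\eps$ then yields the claim. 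The singleton case is immediate from the individual $|F|$-bound without Cauchy--Schwarz. The step I would check most carefully is the disjointness argument: its margin is merely a constant, it depends essentially on the single-dyadic-box hypothesis, and it is precisely what saves the decisive $|\mathcal D|^{1/2}$-loss that an off-the-shelf Cauchy--Schwarz would incur.
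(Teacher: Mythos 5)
Your proposal is correct and follows essentially the same route as the paper: you establish disjointness of the Fourier supports of $\Psi_\bfq(\bfq\bm\xi-\bfa)$ over $\bfq$ in a single dyadic box, derive the pointwise bound $|F(\bfa,\bfq;a,q)| \lesssim_\eps q^{\eps-\dimension/2} w_q(\bfq)^{1-\eps}$ by combining Lemma~\ref{lem:g-sum}(i)--(ii) with Lemma~\ref{lem:Shparlinski} together with $\varphi(m)^{-1}\lesssim_\eps m^{\eps-1}$, and then apply Cauchy--Schwarz in $\bfq$, using the disjointness to make the $\hat f$-factor collapse to $\|f\|_{\ell^2}$. The paper phrases the disjointness input via $\sum_\bfq \Phi_\bfq \lesssim 1$ rather than via indicators $\mathbf 1_{E_\bfq}$, but this is the same observation, and both proofs hinge on the same $|\mathcal D|^{1/2}$-saving you correctly identify as the crucial point.
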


\begin{lemma}\label{lemma:main_term:ell1}
For all $a,q,\bfq$ and $\eps>0$, one has
\begin{equation}\label{eq:main_term:ell1}
\big\| S^\bfq f \big\|_{\ell^1(\Z^\dimension)} \lesssim_\eps q^{\eps} w_q(\mathbf q)^{1-\eps}\| f \|_{\ell^1(\Z^n)}. 
\end{equation}
\end{lemma}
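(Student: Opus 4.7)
The plan is to bound $\|K^\bfq\|_{\ell^1(\Z^n)}$, where $K^\bfq$ is the convolution kernel of $S^\bfq$, and then apply Young's inequality
\[ \|S^\bfq f\|_{\ell^1(\Z^n)} \le \|K^\bfq\|_{\ell^1(\Z^n)}\,\|f\|_{\ell^1(\Z^n)}. \]
First I would compute $K^\bfq$ by Fourier inversion on $\mathbb T^n$. For fixed $\bfa \in \mathbb U_\bfq$ the bump $\Psi_\bfq(\bfq\bm\xi-\bfa) = \psi(16\bfq(\bfq\bm\xi-\bfa))$ is supported in a box of sidelength $\asymp 1/q_i^2$ about $\bfa/\bfq$ (disjoint as $\bfa$ varies). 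Substituting $\bm\eta = \bm\xi - \bfa/\bfq$ and rescaling $\bm\mu = 16\bfq^2\bm\eta$ produces the Schwartz envelope
\[ J(\mathbf x) := \prod_{i=1}^n \frac{1}{16 q_i^2}\, \widetilde\psi\bigl(\tfrac{x_1}{16q_1^2},\dots,\tfrac{x_n}{16q_n^2}\bigr), \qquad |J(\mathbf x)| \lesssim_N \prod_{i=1}^n \frac{q_i^{-2}}{(1+|x_i|/q_i^2)^N}, \]
where $\widetilde\psi$ denotes the Euclidean Fourier transform of $\psi$ and the pointwise bound follows from the Schwartz decay of $\widetilde\psi$ coordinate-by-coordinate. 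Because $F(\bfa,\bfq;a,q) = \prod_i g(a,q;a_i,q_i)$, the arithmetic sum over $\bfa$ factors as well, giving
\[ K^\bfq(\mathbf x) = J(\mathbf x)\prod_{i=1}^n G_{q_i}(a,q;-x_i), \qquad G_r(a,q;u) := \sum_{b \in \mathbb U_r} g(a,q;b,r) e(-ub/r), \]
the one-dimensional sums already controlled in Lemma~\ref{lem:g-sum}(iii).

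Since both the envelope and the arithmetic part split completely over coordinates, $\|K^\bfq\|_{\ell^1(\Z^n)}$ reduces to a product of one-dimensional sums $\prod_{i=1}^n S_i$ with
\[ S_i := \sum_{x \in \Z} \frac{q_i^{-2}}{(1+|x|/q_i^2)^N}\,|G_{q_i}(a,q;-x)|. \]
Exploiting that $G_{q_i}(a,q;\cdot)$ is periodic of period $q_i$, I would decompose the $x$-sum by residue class $u$ modulo $q_i$; each inner envelope sum is a Riemann sum of spacing $1/q_i$ for a rapidly decaying function of $x/q_i^2$, hence bounded by $O(1/q_i)$ uniformly in $u$. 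Combining this with Lemma~\ref{lem:g-sum}(iii) yields
\[ S_i \lesssim \frac{1}{q_i}\cdot\frac{\tau(q_i)\,q_i}{\varphi(q_{i,0})} = \frac{\tau(q_i)}{\varphi(q_{i,0})}, \qquad q_{i,0} := q_i/(q,q_i). \]

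To finish I would apply the classical bounds $\varphi(q_{i,0}) \gtrsim_\eps q_{i,0}^{1-\eps}$ together with $\tau(q_i) \le \tau((q,q_i))\tau(q_{i,0}) \le \tau(q)\tau(q_{i,0}) \lesssim_\eps q^\eps q_{i,0}^\eps$ (using $(q,q_i) \mid q$ and multiplicativity of $\tau$), to obtain $S_i \lesssim_\eps q^\eps((q,q_i)/q_i)^{1-2\eps}$. Taking the product over $i$ gives $\|K^\bfq\|_{\ell^1} \lesssim q^{n\eps} w_q(\bfq)^{1-2\eps}$, and then replacing $\eps$ by $\eps/(2n)$ yields \eqref{eq:main_term:ell1}. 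By Lemma~\ref{lem:g-sum}(i)--(ii) we may assume throughout that $(q_{i,0},q)=1$ and $q_{i,0}$ is squarefree (otherwise $K^\bfq \equiv 0$ and the bound is trivial). There is no real obstacle here beyond the careful bookkeeping separating the divisor-closeness of $q_i$ to $q$ (the $(q,q_i)$ factor) from its coprime squarefree part $q_{i,0}$; the whole argument is driven by the complete coordinate-wise factorization of the kernel together with the one-dimensional bound in Lemma~\ref{lem:g-sum}(iii).
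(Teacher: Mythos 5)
Your argument is correct and is essentially the paper's proof reorganized: both split $S^\bfq$ into the Schwartz envelope $\check\Psi_\bfq$ (whose $\ell^1$ mass per residue class modulo $\bfq$ is $O(1/(q_1\cdots q_n))$) and the $\bfq$-periodic arithmetic factor $G_\bfq(a,q;\cdot)$, and both then invoke Lemma~\ref{lem:g-sum}(iii) together with $\varphi(m)^{-1}\lesssim m^{-1}\log\log m$ and $\tau(m)\lesssim_\eps m^\eps$. The only difference is presentational — you compute the kernel $K^\bfq$ and apply Young's inequality, whereas the paper restricts $f$ to residue classes $\bft\pmod\bfq$ and works with the identity $S^\bfq f_{\bft,\bfq}(\bfy)=G_\bfq(a,q;\bfy-\bft)(\check\Psi_\bfq\star f_{\bft,\bfq})(\bfy)$ — and these are equivalent since $K^\bfq(\bfy-\mathbf x)=\check\Psi_\bfq(\bfy-\mathbf x)\,G_\bfq(a,q;\bfy-\bft)$ for $\mathbf x\equiv\bft\pmod\bfq$.
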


Now, we will use the lemmas to complete the proof of Theorem \ref{thm:main_term}; we prove Lemmas \ref{lemma:main_term:ell2} and~\ref{lemma:main_term:ell1} later in the section and Lemma \ref{lemma:analytic_part} in Appendix \ref{appendix_A}. First, we note that when $1 < p < 2$, interpolation between Lemma \ref{lemma:main_term:ell1} and the singleton case of Lemma \ref{lemma:main_term:ell2} yields
\begin{equation}\label{eq:mainterm:3}
\big\| S^\bfq f \big\|_{\ell^p(\Z^\dimension)} \lesssim_\eps q^{\eps-n/p'} w_q(\mathbf q)^{1-\eps}\| f \|_{\ell^p(\Z^n)},  
\end{equation}
where $p'$ is the conjugate exponent of $p$, defined by the relation $1/p + 1/p' = 1$. Using \eqref{eq:mainterm:2a} and \eqref{eq:mainterm:3}, we obtain 
\begin{equation}\label{eq:mainterm:4} 
\left\| M^{a/q;\mathcal D}_*f \right\|_{\ell^p(\Z^\dimension)} \lesssim \sum_{\bfq \in \mathcal D} \big\| S^\bfq f \big\|_{\ell^p(\Z^\dimension)} \lesssim_\eps q^{\eps-n/p'} \bigg\{ \sum_{\bfq \in \mathcal D} w_q(\mathbf q)^{1-\eps} \bigg\} \| f \|_{\ell^p(\Z^\dimension)} 
\end{equation}
for all $p > 1$. On the other hand, using \eqref{eq:mainterm:2a} and Lemma \ref{lemma:main_term:ell2}, we have 
\begin{equation}\label{eq:mainterm:5} 
\left\| M^{a/q;\mathcal D}_*f \right\|_{\ell^2(\Z^\dimension)} \lesssim \sum_{\bfq \in \mathcal D} \big\| S^\bfq f \big\|_{\ell^2(\Z^\dimension)} \lesssim_\eps q^{\eps-n/2} \bigg\{ \sum_{\bfq \in \mathcal D} w_q(\mathbf q)^{2-\eps} \bigg\}^{1/2} \| f \|_{\ell^2(\Z^\dimension)}. 
\end{equation}
When $1 < p < 2$, we can interpolate between \eqref{eq:mainterm:5} and \eqref{eq:mainterm:4} with $p_1 = (p + 1)/2$. If $\theta$ is defined so that $1/p = (1-\theta)/p_1 + \theta/2$, we get
\begin{equation}\label{eq:mainterm:6} 
\left\| M^{a/q;\mathcal D}_*f \right\|_{\ell^p(\Z^\dimension)} \lesssim_\eps q^{\eps-n/p'}\Sigma_1^{1-\theta}\Sigma_2^{\theta} \| f \|_{\ell^p(\Z^\dimension)}, 
\end{equation}
where
\[ \Sigma_s = \bigg\{ \sum_{\bfq \in \mathcal D} w_q(\mathbf q)^{s-\eps} \bigg\}^{1/s}. \]

Recall that we are interested in the case when $\mathcal D$ is the Cartesian product of intervals $[2^{j_i-1}, 2^{j_i})$, $j_i \in \mathbb Z_+$, and write $D_i = 2^{j_i}$. We have
\[ \Sigma_s^s \le \prod_{i=1}^n \bigg\{ \sum_{d \mid q} d^{s-\eps} \sum_{\substack{r \eqsim D_i\\ d \mid r}}r^{-s+\eps} \bigg\} \lesssim_\eps \prod_{i=1}^n \bigg\{ \sum_{\substack{d \mid q\\ d \le D_i}}(d/D_i)^{s-1-\eps} \bigg\}. \]
Hence, by the well-known inequality $\tau(q) \lesssim_\eps q^{\eps}$, 
\[ \Sigma_1 \lesssim_\eps (qD_1\cdots D_n)^{\eps} \]
and 
\[ \Sigma_2 \lesssim_\eps (qD_1\cdots D_n)^{\eps} \prod_{i=1}^n \left( \frac q{q+D_i} \right)^{1/2} =: (qD_1\cdots D_n)^{\eps}\Pi(q, \mathcal D). \]
Applying these bounds to the right side of \eqref{eq:mainterm:6}, we finally obtain
\begin{equation}\label{eq:mainterm:7} 
\left\| M^{a/q;\mathcal D}_*f \right\|_{\ell^p(\Z^\dimension)} \lesssim_\eps q^{2\eps-n/p'}(D_1\cdots D_n)^\eps \Pi(q, \mathcal D)^{\theta} \| f \|_{\ell^p(\Z^\dimension)}, 
\end{equation}
provided that $p > 1$.

We now apply \eqref{eq:mainterm:7} to all boxes $\mathcal D_{\mathbf j}$ that appear on the right side of \eqref{eq:mainterm:1} and then sum the resulting bounds over $\mathbf j$ to find that
\begin{align}\label{eq:mainterm:8} 
\sum_{\mathbf j \in \Z_+^n} \left\| M^{a/q;\mathcal D_{\mathbf j}}_*f \right\|_{\ell^p(\Z^\dimension)} \lesssim_\eps q^{2\eps-n/p'} \bigg\{ \sum_{j=1}^\infty \frac {2^{j\eps}q^{\theta/2}}{(q + 2^j)^{\theta/2}} \bigg\}^n \| f \|_{\ell^p(\Z^\dimension)}.
\end{align}
Let $j_0 = j_0(q)$ be the unique index for which $2^{j_0} \le q < 2^{j_0+1}$ and note that \eqref{eq:mainterm:8} is uniform in $a \in \mathbb U_q$. By splitting the series over $j$ at $j_0$, we deduce that 
\begin{align}\label{eq:mainterm:9} 
\sum_{a \in \mathbb U_q} \sum_{\mathbf j \in \Z_+^n} \left\| M^{a/q;\mathcal D_{\mathbf j}}_*f \right\|_{\ell^p(\Z^\dimension)} &\lesssim_\eps q^{1-n/p'+2\eps} \bigg\{ \sum_{j \le j_0} 2^{j\eps} + q^{\theta/2}\sum_{j > j_0} 2^{j(\eps-\theta/2)} \bigg\}^n \| f \|_{\ell^p(\Z^\dimension)} \notag\\
& \lesssim_\eps q^{1-n/p'+2\eps} 2^{nj_0\eps} \| f \|_{\ell^p(\Z^\dimension)} \lesssim_\eps q^{1-n/p'+2n\eps} \| f \|_{\ell^p(\Z^\dimension)},
\end{align}
provided that $0 < \eps < \theta/2$. After choosing $\eps > 0$ sufficiently small, Theorem \ref{thm:main_term} is an immediate consequence of \eqref{eq:mainterm:1} and \eqref{eq:mainterm:8}, provided that $n/p' > 2$, that is, $p > \frac{\dimension}{\dimension-2}$.

\subsection{Proofs of the lemmas}

\begin{proof}[Proof of Lemma \ref{lemma:main_term:ell2}]
Note that the functions $\Psi_{\bfq}(\bfq\bm\xi-\bfa)$ with distinct central points $\bfa/\bfq$, where $\bfq \in \mathcal D$, have disjoint supports. Indeed, if $\Psi_{\bfq'}(\bfq'\bm\xi-\bfa') \Psi_{\bfq''}(\bfq''\bm\xi-\bfa'') \ne 0$, with $\bfa'/\bfq' \ne \bfa''/\bfq''$, then for some index $i$, $1 \le i \le n$, we have
\[ \frac 1{4^{j_i}} < \frac 1{q_i'q_i''} \le \bigg| \frac {a_i'}{q_i'} - \frac {a_i''}{q_i''} \bigg| \le \bigg| \frac {a_i'}{q_i'} - \xi_i \bigg| + \bigg| \frac {a_i''}{q_i''} - \xi_i \bigg| \le \frac 1{8(q_i')^2} + \frac 1{8(q_i'')^2} \le \frac 1{4^{j_i}}; \]
a contradiction. Hence, Plancherel's theorem gives
\begin{align}\label{lem10:1}
\| S^{\bfq} f \|_{\ell^2(\Z^\dimension)}^2 
&= \Big\| \widehat{S^{\bfq} f} \Big\|_{L^2(\T^\dimension)}^2 
= \sum_{\bfa \in \mathbb U_{\bfq}} |G(\bfa)|^2 \int_{\T^n} \Psi_\bfq( \bfq\bm\xi - \bfa)^2 \big| \hat f(\bm\xi) \big|^2 \, d\bm\xi \notag\\
&\lesssim \Big( \max_{\bfa \in \mathbb U_{\bfq}} |G(\bfa)|^2 \Big)  \int_{\T^n} \Phi_\bfq(\bm\xi) \big| \hat f(\bm\xi) \big|^2 \, d\bm\xi,
\end{align}  
where
\[ \Phi_\bfq(\bm\xi) = \sum_{\bfa \in \mathbb U_\bfq} \Psi_\bfq( \bfq\bm\xi - \bfa). \]

Applying Lemmas~\ref{lem:Shparlinski} and \ref{lem:g-sum} to each factor $g(a, q; a_i, q_i)$ in $G(\bfa)$, we find that 
\begin{equation}\label{lem10:2}
|G(\bfa)| \lesssim_{\eps} q^{\eps-n/2} \prod_{i=1}^n \varphi \bigg( \frac {q_i}{(q,q_i)} \bigg)^{-1} \lesssim_\eps  q^{\eps-n/2} w_q(\bfq)^{1-\eps}, 
\end{equation}
where we have used the well-known inequality 
\begin{equation}\label{lem10:3} 
\varphi(m)^{-1} \lesssim m^{-1}\log\log m.  
\end{equation}

Combining \eqref{lem10:1}, \eqref{lem10:2} and Cauchy's inequality (in $\bfq$), we obtain
\begin{align*}
\sum_{\bfq \in \mathcal D} \| S^{\bfq}f \|_{\ell^2(\Z^\dimension)} &\lesssim_\eps q^{\eps-n/2} \bigg\{ \sum_{\bfq \in \mathcal D} w_q(\bfq)^{2-2\eps} \bigg\}^{1/2} \bigg\{  \int_{\T^n} \bigg( \sum_{\bfq \in \mathcal D}\Phi_\bfq(\bm\xi) \bigg) \big| \hat f(\bm\xi) \big|^2 \, d\bm\xi \bigg\}^{1/2} \\
&\lesssim_\eps q^{\eps-n/2} \bigg\{ \sum_{\bfq \in \mathcal D} w_q(\bfq)^{2-\eps} \bigg\}^{1/2} \big\| \hat f \big\|_{L^2(\T^n)},
\end{align*} 
by our earlier observation about the supports of the functions $\Psi_{\bfq}(\bfq\bm\xi-\bfa)$.
The lemma follows by another appeal to Plancherel's theorem. 
\end{proof}

\newcommand{\bfd}{{\bf d}}
\newcommand{\bft}{{\bf b}}
\newcommand{\bfr}{{\bf r}}
\newcommand{\bfy}{{\bf y}}
\newcommand{\bfz}{{\bf z}}
\newcommand{\fbq}{f_{\mathbf b, \mathbf q}}

\begin{proof}[Proof of Lemma \ref{lemma:main_term:ell1}]
For $\bft, \bfq \in \Z^n$ and $f: \Z^n \to \C$,  let $\fbq$ denote the restriction of $f$ to the residue class $\bft$ modulo $\bfq$ in $\Z^n$: i.e., $\fbq(\mathbf x) = f(\bft + \bfq\mathbf x)$. We remark that it suffices to prove the lemma for functions $\fbq$. Indeed, if the inequality 
\[ \| S^{\bfq} \fbq \|_{\ell^1(\Z^\dimension)} 
\le M \|\fbq\|_{\ell^1(\Z^\dimension)} \]
holds for all restrictions $\fbq$, then also
\[ \| S^{\bfq} f \|_{\ell^1(\Z^\dimension)} =
\sum_{\bft \in \Z_\bfq} \| S^{\bfq} \fbq \|_{\ell^1(\Z^\dimension)} \le M \sum_{\bft \in \Z_\bfq} \| \fbq \|_{\ell^1(\Z^\dimension)} = M \|f\|_{\ell^1(\Z^\dimension)}. \]

We now proceed to establish \eqref{eq:main_term:ell1} for restrictions $\fbq$. Note that 
\[ \widehat{\fbq}(\bm\xi+\bfa/\bfq) = e(\bft \cdot \bfa/\bfq) \cdot \widehat{\fbq}(\bm\xi). \]
From this we can deduce that 
\[ S^{\bfq} \fbq(\bfy) = H(a,q; \bfy - \bft, \bfq) \big( \widecheck{\Psi_{\bfq}} \star \fbq \big) (\bfy), \] 
where $\widecheck{\Psi_{\bfq}}$ denotes the inverse Fourier transform of $\Psi_\bfq(\bfq\bm\xi)$ and 
\[ H(a,q; \mathbf u, \bfq) = \sum_{\bfa \in \mathbb U_\bfq} G(a,q; \bfa, \bfq) e(-\mathbf u \cdot \bfa/\bfq ). \]
(Note that $H(a,q; \mathbf u, \bfq)$ is a multidimensional version of the sum $h(a,q; u, r)$ that appears in the proof of Lemma \ref{lem:g-sum}.) We now have
\[ \| S^{\bfq} \fbq \|_{\ell^1(\Z^\dimension)} = \sum_{\bfy \in \Z^\dimension} \big| H(a,q; \bfy - \mathbf b, \bfq) \big( \widecheck{\Psi_{\bfq}} \star \fbq \big) (\bfy) \big|. \]
We rearrange the last sum according to the residue class of $\bfy$ modulo $\bfq$. Since $H(a,q; \bfy - \mathbf b, \bfq)$ depends only on the residue class of $\bfy$ modulo $\bfq$, we get
\begin{align}
\| S^{\bfq} \fbq \|_{\ell^1(\Z^\dimension)} &= \sum_{\bfr \in \Z_\bfq} \big| H(a,q; \bfr - \bft, \bfq) \big| \sum_{\bfz \in \Z^\dimension} \big| \big( \widecheck{\Psi_{\bfq}} \star \fbq \big) (\bfq\bfz + \bfr) \big| \notag\\
&= \sum_{\bfr \in \Z_\bfq} \big| H(a,q; \bfr - \bft, \bfq) \big| \sum_{\bfz \in \Z^\dimension} \bigg| \sum_{\mathbf x \in \Z^\dimension} \widecheck{\Psi_{\bfq}}(\bfq\bfz+\bfr-\mathbf x) \fbq(\mathbf x) \bigg| \notag\\
& \leq \sum_{\bfr \in \Z_\bfq} \big| H(a,q; \bfr - \bft, \bfq) \big|\sum_{\mathbf x \in \Z^\dimension} |\fbq(\mathbf x)| \sum_{\bfz \in \Z^\dimension} \big| \widecheck{\Psi_{\bfq}}(\bfq\bfz+\bfr-\mathbf x) \big|. \label{eq:Sfbq}
\end{align} 

The sum over $\mathbf z$ on the right side of \eqref{eq:Sfbq} is $\bfq$-periodic in $\mathbf r - \mathbf x$, so we may assume that $-\frac 12 \le (\mathbf r - \mathbf x)/\mathbf q \le \frac 12$. 
Since $\widecheck{\Psi_\bfq}(\mathbf m) = \widehat{\Psi_\bfq}(\mathbf m)$, we find that 
\begin{align*}
\sum_{\mathbf z \in \mathbb Z^n}  |\widecheck{\Psi_\bfq}(\mathbf q\mathbf z + \mathbf r - \mathbf x)| &= \sum_{\mathbf z \in \mathbb Z^n}  \bigg| \int_{\mathbb R^n} \psi_{\bfq}(16\bfq\bm\xi) e((\mathbf q\mathbf z + \mathbf r - \mathbf x) \cdot \bm\xi) \, d\bm\xi| \\
&= \sum_{\mathbf z \in \mathbb Z^n}  \frac 1{q_1^2 \cdots q_n^2} \bigg| \widehat{\psi_{16}} \bigg( \frac {\mathbf z + (\mathbf r - \mathbf x)/\bfq}{\mathbf q} \bigg) \bigg| 
\\
&\lesssim 
\frac 1{q_1^2 \cdots q_n^2} \sum_{\mathbf z \in \mathbb Z^n} \frac 1{1 + | (\mathbf z + (\mathbf r - \mathbf x)/\bfq)/\mathbf q|^{2n}}
\lesssim \frac 1{q_1 \cdots q_n}.
\end{align*}

Inserting the last bound into the right side of \eqref{eq:Sfbq}, we deduce the estimate 
\begin{align*}
\| S^{\bfq}\fbq \|_{\ell^1(\Z^\dimension)} \lesssim \frac {\| \fbq \|_{\ell^1(\Z^\dimension)}}{q_1 \cdots q_\dimension} \sum_{\bfr \in \Z_\bfq} \big| H(a,q; \bfr - \bft, \bfq) \big|.
\end{align*}
Since
\[ \sum_{\bfr \in \Z_\bfq} \big| H(a,q; \bfr - \bft, \bfq) \big| = \sum_{\mathbf u \in \Z_\bfq} \big| H(a,q; \mathbf u, \bfq) \big| = \prod_{j=1}^n \bigg\{ \sum_{u \in \Z_{q_i}} \big| h(a, q; u, q_i) \big| \bigg\}, \] 
Lemma \ref{lem:g-sum}(iii) now yields
\[ \| S^{\bfq}\fbq \|_{\ell^1(\Z^\dimension)} \lesssim \| \fbq \|_{\ell^1(\Z^\dimension)} \prod_{j=1}^n \bigg( \frac {\tau(q_i)}{\varphi(q_i/(q,q_i))} \bigg). \]
The desired estimate follows from \eqref{lem10:3} and the bound $\tau(m) \lesssim_\eps m^{\eps}$.
\end{proof}

\newcommand{\integralaverage}{B}
\newcommand{\primeaverage}{A}

\section{Comparison with the integral maximal function}
\label{section:integral_comparison}

In this section, we show that the maximal function of the error term is bounded on $\ell^p(\Z^n)$ for a range of $p$ by comparing the averages $\primeaverage_\lambda$ for $\lambda \in \Gammakn$ with the bounds for the corresponding integral operators. This combined with the boundedness of the main term shows that the maximal function $A_*$ is bounded on $\ell^p(\Z^\dimension)$. As we will see, our range of $\ell^p$-boundedness for the averages $\primeaverage_*$ matches that of the integral maximal function $\integralaverage_*$ below, possibly up to endpoints. 

For $f : \Z^\dimension \to \C$ and $\mathbf x \in \Z^\dimension$, define the integral averages by 
\[ \integralaverage_\lambda f(\mathbf x) := (f \star \sigma_\lambda)(\mathbf x) = \frac {1}{\#\{ \bfy \in \Z_+^\dimension : \form(\bfy)=\lambda \}} \sum_{\form(\bfy) = \lambda} f(\mathbf x-\bfy), \]
along with their maximal function 
\[ \integralaverage_* f(\mathbf x) := \sup_{\lambda \in \N} |\integralaverage_\lambda f(\mathbf x)|. \]
The operator $\integralaverage_*$ is equivalent to Magyar--Stein--Wainger's discrete spherical maximal function. Our goal is to prove the following comparison between the integral maximal function and the Waring--Goldbach maximal function. 

\begin{thm}\label{thm:integral_comparison}
Suppose that $1 < p_0 < 2$ and $\dimension \ge n_1(k)$. If $\integralaverage_*$ and $M_*$ are bounded on $\ell^{p_0}(\Z^\dimension)$, then $\primeaverage_*$ is bounded on $\ell^p(\Z^\dimension)$ for $p>p_0$. 
\end{thm}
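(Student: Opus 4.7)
The plan is to use the Approximation Formula (Theorem~\ref{thm:approximation_formula}) to write $A_\lambda = M_\lambda + E_\lambda$ and bound the maximal operators $M_*$ and $E_* := \sup_{\lambda \in \Gammakn} |E_\lambda|$ separately on $\ell^p(\Z^n)$ for $p > p_0$, obtaining the conclusion via the pointwise bound $A_* \le M_* + E_*$.

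For $M_*$: The hypothesis provides an $\ell^{p_0}$ bound. Since the Fourier multiplier $\widehat{M_\lambda}$ is a finite sum of compactly supported bumps with an $\ell^1$-bounded convolution kernel (combining Lemma~\ref{lemma:main_term:ell1} for the arithmetic factors $S^{\bfq}$ with the $\ell^1$-boundedness of the analytic piece $T_\lambda^{\bfq}$, whose multiplier is a smooth compactly supported function), $M_*$ is trivially bounded on $\ell^\infty$. Interpolation extends this to $\ell^p$ for all $p \in [p_0, \infty]$.

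For $E_*$: I would use a dyadic decomposition, setting $E_*^\Lambda f := \sup_{\Lambda \le \lambda \le 2\Lambda} |E_\lambda f|$. Estimate \eqref{eq:dyadic_error_bound} gives
\[
\|E_*^\Lambda\|_{\ell^2(\Z^n) \to \ell^2(\Z^n)} \lesssim_B (\log \Lambda)^{-B}
\]
for any fixed $B>0$. For a companion $\ell^{p_0}$ bound, use the pointwise domination $E_*^\Lambda \le A_*^\Lambda + M_*^\Lambda \le A_*^\Lambda + M_*$ and compare $A_\lambda$ with $B_\lambda$: when $\lambda \sim \Lambda$, one has $\log \mathbf{p} \le (\log \Lambda)^n$, and extending the summation from primes to all positive integer solutions gives
\[
A_\lambda|f|(x) \le \frac{(\log \Lambda)^n}{R(\lambda)} \sum_{\form(\mathbf{y}) = \lambda,\, \mathbf{y} \in \Z_+^n} |f(x-\mathbf{y})| \lesssim (\log \Lambda)^n B_\lambda|f|(x),
\]
using that $r_0(\lambda)/R(\lambda) = O(1)$ from the classical Hardy--Littlewood asymptotic for $r_0(\lambda)$ and Hua's asymptotic for $R(\lambda)$ on $\Gammakn$. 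Taking the supremum and invoking the hypothesis on $B_*$ yields $\|E_*^\Lambda\|_{\ell^{p_0} \to \ell^{p_0}} \lesssim (\log \Lambda)^n$.

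Now one interpolates. For $p \in (p_0, 2)$, writing $\frac{1}{p} = \frac{\theta}{p_0} + \frac{1-\theta}{2}$, Marcinkiewicz interpolation (applied to a measurable linearization of the maximal function) gives
\[
\|E_*^\Lambda\|_{\ell^p \to \ell^p} \lesssim (\log \Lambda)^{n\theta - B(1-\theta)},
\]
which can be made $\lesssim (\log \Lambda)^{-A}$ for any prescribed $A$ by choosing $B$ sufficiently large in Theorem~\ref{thm:approximation_formula}. For $p \ge 2$ one interpolates between the $\ell^2$ bound and the trivial $\ell^\infty$ bound on $E_*^\Lambda$ instead. Summing over dyadic scales $\Lambda = 2^j$,
\[
\|E_* f\|_{\ell^p}^p \le \sum_{j \ge 1} \|E_*^{2^j} f\|_{\ell^p}^p \lesssim \sum_{j \ge 1} j^{-Ap} \|f\|_{\ell^p}^p,
\]
which converges whenever $A > 1/p$. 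The only real difficulty is the bookkeeping: the polylog loss $(\log \Lambda)^n$ from the prime-to-integer comparison must be absorbed by the polylog gain $(\log \Lambda)^{-B}$ from Theorem~\ref{thm:approximation_formula}, so the interpolation exponent $\theta = \theta(p, p_0)$ and the summability condition $Ap > 1$ need to be tracked simultaneously when fixing $B$. No power savings are required at any step, which is precisely why the hybrid sup/mean-value estimate behind Theorem~\ref{thm:approximation_formula} suffices.
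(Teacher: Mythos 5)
Your proposal follows the same route as the paper: decompose $A_\lambda = M_\lambda + E_\lambda$, bound the dyadic maximal error term on $\ell^2$ via \eqref{eq:dyadic_error_bound} and on $\ell^{p_0}$ via the pointwise domination $A_\lambda|f| \lesssim (\log\lambda)^n B_\lambda|f|$ (using $r_0(\lambda)/R(\lambda) = O(1)$ on $\Gammakn$), then interpolate and sum over dyadic scales. The bookkeeping of $\theta$, $B$, and summability is the same as in the paper up to relabeling, and your handling of $p \ge 2$ (interpolating with $\ell^\infty$) matches the paper's concluding remark.
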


\begin{proof}
Recall from the \approximationformula\ that for each $\lambda \in \Gammakn$ we have 
\[ \primeaverage_\lambda f(\mathbf x) = M_\lambda f(\mathbf x)+E_{\lambda}f(\mathbf x). \]
We will use the decay of the dyadic maximal function of the error term on $\ell^2(\Z^\dimension)$. By \eqref{eq:dyadic_error_bound}, we have 
\begin{equation}\label{error:thm1} 
\Big\| \sup_{\lambda \eqsim 2^j} |E_\lambda f| \Big\|_{\ell^2(\Z^\dimension)} \lesssim j^{-K} \|f\|_{\ell^2(\Z^\dimension)} 
\end{equation}
for an arbitrarily large, fixed $K>0$, provided that the parameter $C$ in Theorem \ref{thm:approximation_formula} is chosen sufficiently large. Our first order of business is to establish the following matching bound on $\ell^{p_0}(\Z^n)$:  
\begin{equation}\label{error:blowup}
\Big\| \sup_{\lambda \eqsim 2^j} |E_\lambda f| \Big\|_{\ell^{p_0}(\Z^\dimension)} 
\lesssim 
j^\dimension \|f\|_{\ell^{p_0}(\Z^\dimension)}.
\end{equation}

For each $\mathbf x \in \Z^\dimension$ we have
\[ |\primeaverage_\lambda f(\mathbf x)| \lesssim 
(\log \lambda)^\dimension (\integralaverage_\lambda |f|)(\mathbf x). \]
Thus, 
\[ |E_\lambda f(\mathbf x)| \lesssim |M_\lambda f(\mathbf x)| + (\log \lambda)^\dimension (\integralaverage_\lambda |f|)(\mathbf x) \]
for each $\lambda \in \Gammakn$ and all $\mathbf x \in \Z^\dimension$. In turn, 
\[ \sup_{\lambda \eqsim 2^j} |E_\lambda f(\mathbf x)| \lesssim \sup_{\lambda \eqsim 2^j} |M_\lambda f(\mathbf x)| + j^\dimension \sup_{\lambda \eqsim 2^j} (\integralaverage_\lambda |f|)(\mathbf x). \]
Taking $\ell^{p_0}(\Z^\dimension)$ norms and applying the hypotheses, we deduce \eqref{error:blowup}.

For $p_0 < p < 2$, let $\theta$ be such that $1/p = (1-\theta)/p_0 + \theta/2$, and then choose $K$ sufficiently large to ensure that $n(1 - \theta) - K\theta \le -2$. Then interpolation between \eqref{error:thm1} and \eqref{error:blowup} reveals that 
\[ \Big\| \sup_{\lambda \eqsim 2^j} |E_\lambda f| \Big\|_{\ell^p(\Z^\dimension)} \lesssim j^{-2} \|f\|_{\ell^p(\Z^\dimension)}. \]
Summing over $j \in \N$, we find that 
\[ \Big\| \sup_{\lambda \in \Gammakn} |E_\lambda f| \Big\|_{\ell^p(\Z^\dimension)} \lesssim \|f\|_{\ell^p(\Z^\dimension)} \]
for all $p_0<p<2$. Combining this with our hypothesis that $M_*$ is bounded on $\ell^{p_0}(\Z^\dimension)$ (and hence, also on $\ell^p(\Z^\dimension)$---by interpolation with the trivial $\ell^\infty(\Z^\dimension)$ bound), we are done. \end{proof}

\begin{proof}[Proof of Theorem \ref{mainmaxfunction}]
For $\degree=2$, the main theorem of \cite{MSW} shows that $B_*$ is bounded on $\ell^p(\Z^\dimension)$ for $p > \frac{\dimension}{\dimension-2}$ and $\dimension \geq 5$. 
For $\degree \geq 3$, Theorem~1 of \cite{Hughes_restricted} we have that $B_*$ is bounded on $\ell^p(\Z^\dimension)$ for $p> \max\{\frac{\dimension}{\dimension-\degree}, 1+\frac{\degree^2}{2(\dimension-\degree[\degree+2])+\degree^2}, 1+\frac{\degree}{\frac{2\dimension}{\degree[\degree-1]}-\degree}\}$ and $\dimension \geq \max\{ \degree(\degree+2),\degree^2(\degree-1)\}$. Thus the theorem is true for $p> 1+\frac{\degree^2[\degree-1]}{2\dimension-\degree^2[\degree-1]} = \frac{2\dimension}{2\dimension-\degree^2[\degree-1]}$ and $\dimension \geq \degree^2(\degree-1)$. 
\end{proof}

\section{Applications}
\label{sec:6}

\newcommand{\measure}{\mu} 
\newcommand{\measurespace}{X} 
\newcommand{\measurespacepoint}{x} 
\newcommand{\mfxn}{f} 
\newcommand{\transferfxn}{F} 

\newcommand{\primelattice}{\mathbb{P}^\dimension} 

\newcommand{\measurepreservingtransform}{T} 

\newcommand{\inbraces}[1]{\left\{{#1}\right\}} 
\newcommand{\eof}[1]{e\left({#1}\right)} 

\newcommand{\ZdFT}[1]{\widehat{#1}} 
\newcommand{\contFT}[1]{\widetilde{#1}} 

\renewcommand{\T}{\mathbb{T}} 
\newcommand{\unitsmod}[1]{\mathbb{Z}(#1)^\times } 
\newcommand{\bigmodulus}{Q} 
\renewcommand{\bigradius}{R} 
\renewcommand{\radius}{\lambda^{1/\degree}} 

\newcommand{\Lpnorm}[3]{\left\| #3 \right\|_{L^{#1}(#2)}}
\newcommand{\norm}[3]{\left\| #3 \right\|_{{#1}(#2)}}
\newcommand{\innerproductof}[2]{\left( #1,#2 \right)}

\newcommand{\spectralpoint}{{\bm\eta}} 

\newcommand{\bumpfxn}{\varphi} 

\newtheorem*{oscillation_inequality_mps}{Oscillation inequality for measure preserving systems}
\newtheorem*{oscillation_inequality_lattice}{Transferred oscillation inequality}

\newcommand{\average}{A} 
\newcommand{\ergodicaverage}{\mathcal{A}} 
\newcommand{\acceptableradii}{\Gamma_{n,k}}
\newcommand{\primepoint}{{\bf p}} 
\newcommand{\integralpoint}{{\bf m}} 
\newcommand{\discretecube}[1]{\mathcal{C}(#1) } 
\newcommand{\indicator}[1]{{\bf 1}_{#1}} 

\newcommand{\HLmultiplier}{m}  
\newcommand{\HLmultiplieronarc}{m^{a/q,\bfa/\bfq}}  
\newcommand{\transferHLoponarc}{\mathcal{M}^{\bfq}}  
\newcommand{\transfermaxHLoponarc}{\mathcal{M}^{\bfq}_*}  
\newcommand{\smoothHLmultiplier}{n}  
\newcommand{\smoothHLmultiplieronarc}{n^{a/q,\bfa/\bfq}}  
\newcommand{\smoothtransferHLoponarc}{\mathcal{N}^{a/q,\bfa/\bfq}}  
\newcommand{\transfererror}{\mathcal{E}}

\renewcommand{\Q}{\mathbb{Q}}

In this section, we prove Theorems~\ref{thm:decayatirrationalfrequencies} and \ref{thm:pointwise_ergodic}. Recall that in what follows, $(X,\mu)$ denotes a probability space with a commuting family of invertible measure preserving transformations $T=(T_1,...,T_\dimension)$ without any rational points in their spectrum. For a function $f: X \to \C$ the Waring--Goldbach ergodic averages on $X$ with respect to $T$ for $\lambda \in \Gammakn$ are defined by \eqref{ergavg}. 

\subsection{Proof of Theorem~\ref{thm:decayatirrationalfrequencies}}

Fix $\eps > 0$ and let $\delta > 0$ be a parameter to be chosen later (in terms of $\eps$). Since $\bm\xi \notin \Q^n$, we may assume without loss of generality that $\xi_1 \notin \Q$. Then, we can choose a convergent $b/r$ to the continued fraction of $\xi_1$ with $r > 2\delta^{-1}$. 

Now, for a large $\lambda \in \Gammakn$, let $N = \lambda^{1/k}$ and $Q = (\log N)^C$, where $C = C(1) > 0$ is the power in the \approximationformula\ corresponding to having \eqref{eq:sup_error_bound} for $B = 1$. We note that for sufficiently large $\lambda$, there is at most one rational point $\bfa/\bfq$ such that
\begin{equation}\label{eq:6.1}
1 \le \bfq \le Q, \quad \bfa \in \mathbb U_{\bfq}, \quad \psi_{N/Q}(\bfq\bm\xi - \bfa) > 0. 
\end{equation} 
If such a rational point does not exist, the main term in \eqref{eq:approximation_formula} vanishes, and we have 
\[ \widehat{\omega_\lambda}(\bm\xi) \lesssim (\log\lambda)^{-1}. \] 
Otherwise, \eqref{eq:approximation_formula} yields
\[ \widehat{\omega_\lambda}(\bm\xi) \lesssim \big| \singseries \widetilde{d\sigma_1}(N(\bm\xi - \bfa/\bfq)) \big| + (\log\lambda)^{-1}, \]
where $\bfa/\bfq$ satisfies \eqref{eq:6.1}. Using \eqref{eq2.3} with $\eps = 1/(4n)$, we deduce that, for $n \ge 5$,
\[ \singseries \lesssim q_1^{-9/20} \sum_{q = 1}^{\infty} q^{21/20-n/2}(q,q_1)^{1/2} \lesssim q_1^{-9/20} \sum_{d \mid q_1} d^{1/2} \sum_{\substack{q = 1\\ d \mid q}}^{\infty} q^{21/20-n/2} \lesssim q_1^{-2/5}. \]
Hence, 
\[ \widehat{\omega_\lambda}(\bm\xi) \lesssim q_1^{-2/5} \big| \widetilde{d\sigma_1}(N(\bm\xi - \bfa/\bfq)) \big| + (\log\lambda)^{-1}. \]

Using the decay of $\widetilde{d\sigma_1}$ (see for example \cite{BNW}), we may now choose $\delta$ so that 
\[ q_1^{-2/5} \widetilde{d\sigma_1}(N(\bm\xi - \bfa/\bfq)) \lesssim \eps, \]
unless
\begin{equation}\label{eq:6.2}
1 \le q_1 \le \delta^{-1} \quad \text{and} \quad |\xi_1 - a_1/q_1| \le (\delta N)^{-1}. 
\end{equation} 
Thus, we have
\[ \widehat{\omega_\lambda}(\bm\xi) \lesssim \eps + (\log\lambda)^{-1}, \]
unless $a_1,q_1$ and $\xi_1$ satisfy \eqref{eq:6.2}. To complete the proof of the theorem, we will show that for sufficiently large $\lambda$, inequalities \eqref{eq:6.2} are inconsistent with the choice of $b/r$. 

Suppose that conditions \eqref{eq:6.2} do hold and recall that $|r\xi_1 - b| < r^{-1}$. Then
\[ |bq_1 - a_1r| \le \frac {rq_1}{\delta N} + \frac {q_1}r < \frac r{\delta^2N} + \frac 12 < 1, \]
as $N \to \infty$. Since $b/r$ and $a_1/q_1$ are reduced fractions, we conclude that $a_1 = b$ and $q_1 = r$. The latter, however, contradicts the inequalities $q_1 \le \delta^{-1} < r/2$. \qed

\begin{remark}
We comment that a shorter proof of Theorem~\ref{thm:decayatirrationalfrequencies} exists by using the decay of the error term in \eqref{eq:dyadic_error_bound}, but this proof has the advantage of not relying on the bound \eqref{eq:dyadic_error_bound} and instead uses \eqref{eq:sup_error_bound}.
\end{remark}

\subsection{The Pointwise Ergodic Theorem}

\newcommand{\largenum}{K}
\newcommand{\transF}{F}

To prove Theorem~\ref{thm:pointwise_ergodic} we will utilize the Calder\'on transference principle and in doing so, we need to introduce some notation. Let $\largenum$ be a large natural number and define the discrete cube 
\[ \discretecube{\largenum} := \big\{ \integralpoint \in \Z^\dimension : |m_i| \leq \largenum \text{ for } i=1, \dots, \dimension \big\}. \]
For a $\mu$-measurable function \( f : X \to \C \), define its truncated \emph{transfer function}, 
\[ \transF(\measurespacepoint,\integralpoint) = f(T^{\integralpoint} x) \indicator{\discretecube{N}}(\integralpoint). \]  
For \( \lambda \in \Gammakn \), also define the \emph{transferred averages}
\begin{equation*}
\ergodicaverage_{\lambda} \transF(x,\integralpoint) 
:= 
\frac{1}{R(\lambda)} \sum_{\form({\bf p}) = \lambda} \log({\bf p}) \transF(x,\integralpoint+{\bf p})
\end{equation*}
and their \emph{tail maximal function} 
\begin{equation*}
\ergodicaverage_{>R} \transF(x,\integralpoint) 
:= 
\sup_{\lambda > R} \left| \ergodicaverage_{\lambda} \transF(x,\integralpoint) \right| 
.
\end{equation*}

We endow the transfer space $X \times \Z^\dimension$ with the product measure of $\mu$ on $X$ and the counting measure on $\Z^\dimension$. As in \cite{Hughes_Vinogradov}, we deduce Theorem~\ref{thm:pointwise_ergodic} from the tail oscillation inequality below. We refer to \cite{Hughes_Vinogradov} for the details of this reduction, which relies on the Calder\'on transference principle.

\begin{prop}[Transferred Oscillation Inequality]
Let $\mfxn$ be a bounded function of mean zero on $\measurespace$ and $\transF$ its transfer function. 
For each $\eps>0$, there exists a sufficiently large radius $R = R(\eps,\mfxn)$ such that 
\begin{equation}\label{oscillation_inequality_for_lattice}
\Lpnorm{2}{\measurespace \times \Z^\dimension}{\ergodicaverage_{>R} \transF } < \eps \Lpnorm{2}{\measurespace \times \Z^\dimension}{\transF} .
\end{equation}
\end{prop}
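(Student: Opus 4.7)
The approach is to apply the \approximationformula\ (Theorem~\ref{thm:approximation_formula}) to split $\ergodicaverage_\lambda \transferfxn = \mathcal{M}_\lambda \transferfxn + \transfererror_\lambda \transferfxn$, where $\mathcal{M}_\lambda$ and $\transfererror_\lambda$ are convolution in the $\integralpoint$-variable against the main and error multipliers, respectively. I control the two contributions to the transferred tail maximal function separately.

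For the error term, fix $B=2$ and $C=C(2)$ in Theorem~\ref{thm:approximation_formula}. For each $\measurespacepoint \in X$, the dyadic maximal bound \eqref{eq:dyadic_error_bound} yields
\[
\bigl\| \sup_{2^j \le \lambda < 2^{j+1}} |\transfererror_\lambda \transferfxn(\measurespacepoint, \cdot)| \bigr\|_{\ell^2(\Z^\dimension)} \lesssim j^{-2} \|\transferfxn(\measurespacepoint, \cdot)\|_{\ell^2(\Z^\dimension)}.
\]
Squaring, summing over $j \ge \log_2 R$, and integrating in $\measurespacepoint$ via Fubini gives $\bigl\|\sup_{\lambda>R}|\transfererror_\lambda \transferfxn|\bigr\|_{L^2(\measurespace\times\Z^\dimension)} \lesssim (\log R)^{-1/2} \|\transferfxn\|_{L^2(\measurespace\times\Z^\dimension)}$, which is below $\epsilon \|\transferfxn\|_{L^2}/2$ as soon as $R$ is chosen large in $\epsilon$ alone; larger choices of $B$ would yield any polynomial decay in $\log R$ needed below.

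For the main term, I use Plancherel in $\Z^\dimension$ followed by the spectral theorem for the commuting family $T=(T_1,\ldots,T_\dimension)$, as in Section~4 of \cite{Magyar:ergodic}, to express $\|\mathcal{M}_\lambda \transferfxn\|_{L^2(\measurespace \times \Z^\dimension)}^2$ as an integral of $|\widehat{M_\lambda}|^2$ against the joint spectral measure $\sigma_\mfxn$ of $\mfxn$ on $\T^\dimension$ convolved with a Fej\'er-type kernel arising from the truncation $\indicator{\discretecube{K}}$. The multiplier $\widehat{M_\lambda}$ is supported in the shrinking union of neighborhoods of the rationals $\bfa/\bfq$ with $\bfq \le (\log\lambda)^{C/\degree}$ and radii $O(\lambda^{-1/\degree}(\log\lambda)^C)$. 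Since $\mfxn$ has mean zero and $T$ has no rational joint spectrum, $\sigma_\mfxn(\mathbb{Q}^\dimension) = 0$; by outer regularity of the finite measure $\sigma_\mfxn$, there is $\delta=\delta(\epsilon,\mfxn)>0$ such that the $\delta$-neighborhood of $\mathbb{Q}^\dimension$ carries $\sigma_\mfxn$-mass below $\epsilon^2\|\mfxn\|_{L^2}^2/4$. Taking $R=R(\epsilon,\mfxn)$ large enough that the Fourier support of $\widehat{M_\lambda}$ lies in this neighborhood for all $\lambda > R$ delivers the required per-$\lambda$ bound on the main term.

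The principal obstacle is exchanging the supremum in $\lambda$ past the spectral integration for the main term. I would handle this by combining the $\ell^2(\Z^\dimension)$-boundedness of $M_*$ (Theorem~\ref{thm:main_term}, valid in our range since $p_{\degree,\dimension} < 2$, with the boundary case absorbed by interpolation against the trivial $\ell^\infty$ estimate) with a lacunary-in-$\lambda$ decomposition and a Rademacher--Menshov-type square-function argument in the spirit of \cite{Bourgain_maximal_ergodic} and \cite{Magyar:ergodic}: within each dyadic block the per-scale spectral bound above decays in the dyadic index, while a square function of the between-block variations of $\mathcal{M}_\lambda \transferfxn$ is controlled by $M_*$. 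Summing these contributions from $\lambda \sim R$ onward then yields the desired tail estimate $< \epsilon \|\transferfxn\|_{L^2(X \times \Z^\dimension)}/2$.
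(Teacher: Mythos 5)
The error-term treatment is essentially the paper's: fix a large $B$ in Theorem~\ref{thm:approximation_formula}, apply \eqref{eq:dyadic_error_bound} dyadically with Fubini in the $x$-variable, and sum the tail. That part is fine.

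The main-term argument, however, has a genuine gap. You invoke outer regularity to find $\delta>0$ so that \emph{the $\delta$-neighborhood of $\mathbb{Q}^\dimension$} carries small $\sigma_\mfxn$-mass. But $\mathbb{Q}^\dimension$ is dense in $\T^\dimension$, so its $\delta$-neighborhood is all of $\T^\dimension$ for every $\delta>0$; that thickened set carries all of $\|\mfxn\|_{L^2}^2$, not a fraction of it. Outer regularity produces an open set $U \supset \mathbb{Q}^\dimension$ of small measure, but $U$ is not a uniform thickening, and the Fourier support of $\widehat{M_\lambda}$ (a union of arcs of radius $\sim (\log\lambda)^C / \lambda^{1/\degree}$ around all $\bfa/\bfq$ with $\bfq \le (\log\lambda)^C$) cannot be driven into such a $U$ merely by taking $R$ large, because the denominator threshold $Q=(\log\lambda)^C$ also grows. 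The paper resolves this by a further decomposition of the main term: the contribution from moduli $q, \bfq$ above a \emph{fixed} threshold $\bigmodulus$ is made small via the quantitative $\ell^2$ bounds on $S^{\bfq}$ from Section~\ref{sec:main_term} (Lemma~\ref{lemma:main_term:ell2} together with \eqref{eq:mainterm:9}); what remains is a \emph{finite} collection of rationals $a/q, \bfa/\bfq$ with $q,\bfq \le \bigmodulus$, and only then does outer regularity (equivalently, the no-rational-spectrum hypothesis applied point by point) apply to each of the finitely many arcs. Without the large/small modulus split, the argument does not close.

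A secondary, less serious issue: you propose a Rademacher--Menshov square-function device to push the supremum in $\lambda$ past the spectral integral. The paper avoids this entirely via Lemma~\ref{lemma:analytic_part}: the tail maximal function over $\lambda>R$ of the analytic piece is dominated by a \emph{single} convolution with a bump of width $\sim 1/R$ localized at $\bfa/\bfq$ (because $\check{\psi_{\lambda^{1/k}(\log\lambda)^{-C}}}\ast d\sigma_\lambda$ is an approximate identity at scale $\lambda^{1/\degree}$, controlled by a Hardy--Littlewood maximal operator). That single multiplier is what feeds cleanly into the spectral computation. This is both simpler than a square-function argument and essential for the per-arc localization to work.
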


The proof of the transferred oscillation inequality requires a few steps, which we carry out in succession. First, we extend the \approximationformula\ to the lifted averages. For $\bm\xi \in \T^\dimension$, define the partial $\Z^\dimension$-Fourier transform as 
\[
\ZdFT{\transF}(\measurespacepoint,\bm\xi) 
:= 
\sum_{\integralpoint \in \Z^\dimension} \transF(\measurespacepoint,\integralpoint) \eof{\integralpoint \cdot \bm\xi} 
.\] 
The reader may verify that 
\begin{equation}\label{eq:ergodic_averagesFT}
\ZdFT{\ergodicaverage_{\lambda} \transF} (x,\bm\xi) 
= \ZdFT{\omega_\lambda}(\bm\xi) \ZdFT{\transF}(x,\bm\xi).
\end{equation}
Equation \eqref{eq:ergodic_averagesFT} allows us to extend the multipliers on $\Z^\dimension$ to multiplers on $\measurespace \times \Z^\dimension$.  Suppressing the dependence on $a,q$, we define the convolution operators $\transferHLoponarc_\lambda$ by the multipliers 
\[
\ZdFT{\transferHLoponarc_\lambda \transferfxn}(\measurespacepoint, \bm\xi) := \widehat{M_\lambda^{\bfq}}(\bm\xi) \ZdFT{\transferfxn}(\measurespacepoint, \bm\xi),  
\]
where $\widehat{M_\lambda^{\bfq}}$ is the Fourier multiplier from Section \ref{sec:main_term} with $N = N_\lambda = \lambda^{1/k}$ and $Q = (\log\lambda)^C$. 
Similarly, define the error term by 
\begin{equation}
\ZdFT{\transfererror_\lambda \transferfxn}(\measurespacepoint, \bm\xi) 
= 
\ZdFT{E_\lambda}(\bm\xi) \ZdFT{\transferfxn}(\measurespacepoint, \bm\xi)
.\end{equation}
Also define their tail maximal functions similarly to $\ergodicaverage_{>R} \transferfxn$. 

Our estimates on the error term in Theorem~\ref{thm:approximation_formula} transfer over to show that 
\begin{equation}
\left\| \sup_{\lambda>R} |\transfererror_\lambda \transferfxn| \right\|_{L^2(X \times \Z^\dimension)} 
\lesssim 
(\log R)^{-B_1} \|\transferfxn\|_{L^2(X \times \Z^\dimension)}
\end{equation} 
for all large $B_1 > 0$, so that choosing $R$ sufficiently large we may make this arbitrarily small. This shows that the averages are equiconvergent with the main term. Lemmas~\ref{lemma:analytic_part} and \ref{lemma:main_term:ell2} and a version of \eqref{eq:mainterm:8} for $p=2$ combine to give 
\begin{align*}
\bigg\| \sup_{\lambda>R} \bigg| \sum_{q,a} \sum_{\bfq > Q} \transferHLoponarc_{\lambda} \transferfxn| \bigg\|_{L^2(X \times \Z^\dimension)} 
& \leq 
\sum_{a,q,\bfa,\bfq > Q} 
\| \transferHLoponarc_{>R} \transferfxn \|_{L^2(X \times \Z^\dimension)} 
\\
& \lesssim 
Q^{-C_2} \|\transferfxn\|_{L^2(X \times \Z^\dimension)}
\end{align*} 
for some positive $C_2$ when $\dimension \geq \max\{\dimension_1(\degree),\dimension_2(\degree)\}$. 

Our final proposition completes the proof of Theorem~\ref{thm:pointwise_ergodic}. 
This is the only place where the vanishing of the rational spectrum is used. 
\begin{prop}\label{proposition:ergodic_main_terms_small}
If $\epsilon>0$, then there exists a radius $R = R(\mfxn;\epsilon,\bigmodulus) \in \acceptableradii$ sufficiently large such that for all $q, \bfq \leq \bigmodulus$, $a \in U_{q}$ and $\bfa \in U_{\bfq}$, 
\begin{equation}\label{low_frequency_oscillation}
\Lpnorm{2}{\measurespace \times \Z^\dimension}{\transferHLoponarc_{>R} \transferfxn}
\lesssim 
\epsilon \Lpnorm{2}{\measurespace \times \Z^\dimension}{\transferfxn} 
\end{equation}
with implicit constants independent of $a, \bfa; q, \bfq$. 
\end{prop}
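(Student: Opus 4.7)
The plan is to reduce the bound to an $L^2(X)$ estimate via the spectral theorem and exploit the hypothesis that the joint spectrum of $T$ avoids the rational point $\bfa/\bfq$. Let $E$ denote the joint projection-valued spectral measure of the commuting unitary operators $U_i\colon g\mapsto g\circ T_i$ on $L^2(X,\mu)$, and let $\mu_f$ be the scalar spectral measure of $f$, so $\mu_f$ is a finite Borel measure on $\T^\dimension$ with $\mu_f(\{\bfa/\bfq\})=0$. For the transfer function $\transferfxn(x,\integralpoint)=\mfxn(\measurepreservingtransform^\integralpoint x)\indicator{\discretecube{K}}(\integralpoint)$, provided $K$ is chosen much larger than $R^{1/\degree}$ the identity
\[
\transferHLoponarc_\lambda\transferfxn(x,\integralpoint)=T^\integralpoint g_\lambda(x)\quad\text{for }\integralpoint\in\discretecube{K-K'},
\]
holds (with a boundary contribution of relative size $K'/K$), where $K'\sim\lambda^{1/\degree}$ and
\[
g_\lambda := \Bigl(\prod_{i=1}^{\dimension}g(a,q;a_i,q_i)\Bigr)\int_{\T^\dimension}\psi_{N/Q}(\bfq\bm\eta-\bfa)\,\widetilde{d\sigma_{\lambda_0}}(N(\bm\eta-\bfa/\bfq))\,dE(\bm\eta)\mfxn.
\]

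The key structural observation is that the integrand defining $g_\lambda$ is supported in the box $\B_\lambda := \bfa/\bfq+\prod_{i=1}^{\dimension}[-Q/(q_iN),Q/(q_iN)]$, which for $\lambda>R$ lies in the single shrinking box $\B_R$ around $\bfa/\bfq$, and $\B_R\downarrow\{\bfa/\bfq\}$ as $R\to\infty$. Split $\mfxn=\mfxn_R+\mfxn_R^\perp$ with $\mfxn_R:=E(\B_R)\mfxn$. Since the integrand is supported inside $\B_R$, we have $g_\lambda(\mfxn_R^\perp)=0$ for all $\lambda>R$, and hence the contribution to $\transferHLoponarc_{>R}$ coming from $\mfxn_R^\perp$ is confined to the boundary strip and can be absorbed by taking $K$ large enough (concretely, it is bounded by a constant times $(K'/K)^{1/2}\|\transferfxn\|_{L^2(\measurespace\times\Z^\dimension)}$).

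For the main piece $\mfxn_R$, apply Plancherel in $\integralpoint$ together with the spectral-theoretic identity above to obtain, for every $\lambda>R$,
\[
\|\transferHLoponarc_\lambda(\mfxn_R\text{'s transfer})\|_{L^2(\measurespace\times\Z^\dimension)}^2
\lesssim |\discretecube{K}|\,\bigl\|\widehat{\HLmultiplieronarc_\lambda}\bigr\|_{\infty}^{2}\,\measure_\mfxn(\B_R),
\]
where $\|\widehat{\HLmultiplieronarc_\lambda}\|_\infty$ is uniformly bounded in $\lambda$ thanks to Lemmas~\ref{lem:Shparlinski} and~\ref{lem:g-sum} (controlling $\prod g(a,q;a_i,q_i)$) and the boundedness of $\psi$ and $\widetilde{d\sigma_{\lambda_0}}$. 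To promote this to the supremum over $\lambda>R$, invoke the maximal inequality for $\transferHLoponarc_*$ that follows (via Calder\'on transference applied to Theorem~\ref{thm:main_term}, or equivalently from Corollary~2.1 of~\cite{MSW} applied to the multiplier $\widehat{\HLmultiplieronarc_\lambda}$ together with Lemma~\ref{lemma:analytic_part}) to get
\[
\|\sup_{\lambda>R}|\transferHLoponarc_\lambda(\mfxn_R\text{'s transfer})|\|_{L^2(\measurespace\times\Z^\dimension)}\lesssim |\discretecube{K}|^{1/2}\,\measure_\mfxn(\B_R)^{1/2}.
\]
Dividing by $\|\transferfxn\|_{L^2(\measurespace\times\Z^\dimension)}=|\discretecube{K}|^{1/2}\|\mfxn\|_{L^2(\measurespace)}$ reduces matters to showing $\measure_\mfxn(\B_R)<\epsilon^{2}\|\mfxn\|_{L^2(\measurespace)}^{2}$, which holds for all sufficiently large $R$ by continuity of $\mu_f$ from above at the singleton $\{\bfa/\bfq\}$, using the hypothesis $\mu_f(\{\bfa/\bfq\})=0$.

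The main obstacle is the interplay between two limits: the truncation cube must be large enough that its Dirichlet kernel is well-localized compared with the shrinking support of $\widehat{\HLmultiplieronarc_\lambda}$, yet all constants must remain independent of $a,\bfa,q,\bfq$ (as asserted in the statement). This is handled by the order of quantifiers: first choose $R$ so that $\mu_f(\B_R)$ is small relative to $\epsilon^2\|\mfxn\|_{L^2(\measurespace)}^2$ (using only that $\bfa/\bfq$ is rational and $\mu_f$ assigns no mass to rationals, with uniformity in $a,\bfa,q,\bfq\le\bigmodulus$ following from there being only finitely many such rationals), and only then choose $K=K(R)$ large enough that the boundary contribution from $\mfxn_R^\perp$ is also below threshold. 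The finiteness of the set of rational points $\bfa/\bfq$ with $q,\bfq\le Q$ is what allows a single $R$ to work for all of them simultaneously.
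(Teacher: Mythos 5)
Your argument shares the paper's two essential ingredients: (1) the spectral theorem reduces matters to the scalar spectral measure $\nu_\mfxn$, and (2) the vanishing $\nu_\mfxn(\{\bfa/\bfq\})=0$ together with continuity from above and the finiteness of $\{ \bfa/\bfq : \bfq \le Q\}$ lets a single $R$ work uniformly. Your treatment of $\mfxn_R := E(\B_R)\mfxn$ is correct: the maximal inequality gives $\lesssim \|F_R\|_{L^2} = |\discretecube{K}|^{1/2}\nu_\mfxn(\B_R)^{1/2}$. However, where you and the paper diverge is in the decomposition. The paper first majorizes $\transferHLoponarc_{>R}$ by a \emph{single}, $\lambda$-independent convolution operator $\check{\psi_{R'}^{\bfa/\bfq}}*$ via Lemma~\ref{lemma:analytic_part}, and only then does Plancherel/spectral theory, at which point the truncation is captured exactly by the Fej\'er-type kernel $\widehat{\Delta_K}$ and is split as $\delta_0 + (\widehat{\Delta_K}-\delta_0)$. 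You instead split $\mfxn = \mfxn_R + \mfxn_R^\perp$ in the spectral variable and invoke an approximate transfer identity to claim that $\transferHLoponarc_\lambda(\mfxn_R^\perp\text{'s transfer})$ is a boundary error.

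The gap is precisely in this $\mfxn_R^\perp$ piece. You claim its contribution is bounded by $(K'/K)^{1/2}\|F\|_{L^2}$ with $K' \sim \lambda^{1/\degree}$, but the supremum runs over \emph{all} $\lambda > R$; as $\lambda \to \infty$, the kernel of $\transferHLoponarc_\lambda$ spreads to scale $\sim q_i\lambda^{1/\degree}(\log\lambda)^{-C}$, which eventually exceeds any fixed $K$, so the claimed ``boundary strip of relative width $K'/K$'' estimate degenerates. (Also, the kernel is Schwartz-tailed rather than compactly supported, so ``confined to the boundary strip'' is at best approximate even at fixed $\lambda$.) Consequently, choosing $K = K(R)$ large is not enough. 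The clean way to repair this is to do what the paper does: apply Lemma~\ref{lemma:analytic_part} \emph{before} splitting, so the entire tail maximal function is dominated by $\check{\psi_{R'}^{\bfa/\bfq}}*F$ with $R' = R(\log R)^{-C}$ fixed; then Plancherel plus the spectral theorem yields the exact identity
\begin{equation*}
\#\discretecube{K}^{-1}\,\bigl\| \check{\psi_{R'}^{\bfa/\bfq}} * F \bigr\|_{L^2(X\times\Z^\dimension)}^2 = \int_{\T^\dimension} \bigl( |\psi_{R'}(\bfq\cdot-\bfa)|^2 * \widehat{\Delta_K}\bigr)(\spectralpoint)\, d\nu_\mfxn(\spectralpoint),
\end{equation*}
and your decomposition in the spectral variable becomes equivalent to the paper's split of $\widehat{\Delta_K}$ into $\delta_0$ plus an approximate-identity error that is genuinely $O(\epsilon)$ for $K$ large, uniformly since the multiplier is now fixed. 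With that fix, your argument and the paper's become essentially the same proof.
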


As this is the essential part, we include the proof. 
Our proof will follow that of Proposition~9.2 in \cite{Hughes_Vinogradov} for the integral $\degree$-spherical maximal function. 
Unlike the integral maximal function where the localizing bump function depends on the modulus $q$, our current localizing bump function depends on the radius so that the continuous part or the multiplier behaves like a smooth Hardy--Littlewood averaging operator. 
This simplifies our exposition.

\begin{proof}
By Lemma~\ref{lemma:analytic_part}, the tail maximal function of the multipliers 
\[ 
\psi_{\lambda^{1/k}(\log{\lambda})^{-C}}(\bfq\bm\xi-\bfa) \widetilde{d\sigma_{\lambda_0}}(N(\bm\xi-\bfa/\bfq)) 
\] 
is bounded on $L^2(X \times \Z^\dimension)$ with the bound 
\begin{align*}
\Lpnorm{2}{\measurespace \times \Z^\dimension}{\transferHLoponarc_{>R} \transferfxn} 
& \lesssim 
\Lpnorm{2}{\measurespace \times \Z^\dimension}{\left( \prod_{i=1}^\dimension g(a, q; a_i, q_i) \right) \check{\psi_{\bigradius'}^{\bfa/\bfq}} * \transferfxn}
\end{align*}
where $\bigradius' := \bigradius (\log\bigradius)^{-C}$. 
To prove Proposition~\ref{proposition:ergodic_main_terms_small} it suffices to show that 
\begin{equation}
\Lpnorm{2}{\measurespace \times \Z^\dimension}{\check{\psi_{\bigradius'}^{\bfa/\bfq}} * \transferfxn} 
\lesssim 
\epsilon \Lpnorm{2}{\measurespace \times \Z^\dimension}{\transferfxn}
\end{equation}
for each $\bfa,\bfq$ and sufficiently large $\bigradius$ depending on $\epsilon$. 
Plancherel's Theorem and the Spectral Theorem imply 
\begin{equation*}
\Lpnorm{2}{\measurespace \times \Z^\dimension}{\check{\psi_{\bigradius'}^{\bfa/\bfq}} * \transferfxn}^2 
= 
\int_{\T^\dimension} \int_{\T^\dimension} | \psi_{\bigradius'}(\bfq\bm\xi-\bfa) |^2 
\sum_{\integralpoint_1,\integralpoint_2 \in \discretecube{\largenum}} \eof{(\integralpoint_1-\integralpoint_2)[\spectralpoint+\bm\xi]} 
\, d\bm\xi \, d\nu_{\mfxn}(\spectralpoint)
.
\end{equation*}
Once again, see \cite{Magyar:ergodic} for this derivation. 
Collecting $\integralpoint_1-\integralpoint_2 = \integralpoint$, we define the sequence 
\[
\Delta_N(\integralpoint) 
:=  
\frac{\#\{(\integralpoint_1,\integralpoint_2) \in \discretecube{\largenum} \times \discretecube{\largenum} : \integralpoint_1-\integralpoint_2 = \integralpoint \}} {\#{\discretecube{\largenum}}} 
. 
\]
The above becomes 
\begin{equation*}
\Lpnorm{2}{\measurespace \times \Z^\dimension}{\check{\psi_{\bigradius}^{\bfa/\bfq}} * \transferfxn}^2 
= 
\int_{\T^\dimension} \int_{\T^\dimension} | \psi_{\bigradius'}(\bfq\bm\xi-\bfa)|^2 
\sum_{\integralpoint \in \Z^\dimension} \#{\discretecube{\largenum}} \Delta_\largenum(\integralpoint) 
\cdot e(\integralpoint \cdot [\bm\xi+\spectralpoint]) 
\, d\bm\xi \, d\nu_{\mfxn}(\spectralpoint) 
. 
\end{equation*}

Note that $\Delta_\largenum \to 1$ as $\largenum \to \infty$. 
This implies that $\ZdFT{\Delta_\largenum}(\bm\xi) \to \delta_0(\bm\xi)$ tends pointwise to the Dirac delta function on $\T^\dimension$ as $\largenum \to \infty$. 
Therefore, 
\begin{align*}
\#{\discretecube{\largenum}}^{-1} \Lpnorm{2}{\measurespace \times \Z^\dimension}{\check{\psi_{\bigradius}^{\bfa/\bfq}} * \transferfxn}^2 
& = 
\int_{\T^\dimension} \int_{\T^\dimension} | \psi_{\bigradius'}(\bfq\bm\xi-\bfa)|^2 
\sum_{\integralpoint \in \Z^\dimension} \Delta_\largenum(\integralpoint) 
\cdot e(\integralpoint \cdot [\bm\xi+\spectralpoint]) 
\, d\bm\xi \, d\nu_{\mfxn}(\spectralpoint) 
\\
& = 
\int_{\T^\dimension} \int_{\T^\dimension} | \psi_{\bigradius'}(\bfq\bm\xi-\bfa) |^2 
\cdot \ZdFT{\Delta_\largenum}(\bm\xi+\spectralpoint) 
\,\, d\bm\xi \, d\nu_{\mfxn}(\spectralpoint) \\
& = \int_{\T^\dimension} (|\psi_{\bfq\bigradius}(\cdot-\bfa/\bfq)|^2 * \ZdFT{\Delta_\largenum})(\spectralpoint) \; d\nu_{\mfxn}(\spectralpoint) 
\end{align*}
where the convolution is on the torus. 
Now we make use of the fact that multiplier is localized to low frequencies. 
For all $\epsilon>0$, there exists $\largenum_\epsilon \in \N$ such that $|\ZdFT{\Delta_{\largenum}} - \delta_0| < \epsilon$ for all $\largenum>\largenum_\epsilon$ and 
\begin{align*}
&\int_{\T^\dimension}
\left| |\psi_{\bfq\bigradius}(\cdot-\bfa/\bfq)|^2 * \ZdFT{\Delta_\largenum}
(\spectralpoint)
d\nu_{\mfxn}(\spectralpoint) \right| \\& \leq \int_{\T^\dimension} | |\psi_{\bfq\bigradius}(\cdot-\bfa/\bfq)|^2*|\ZdFT{\Delta_\largenum}-\delta_0|(\spectralpoint) |+ \left| |\psi_{\bfq\bigradius}(\cdot-\bfa/\bfq)|^2 * \delta_0(\spectralpoint) \right| d\nu_{\mfxn}(\spectralpoint) \\
&= \int_{\T^\dimension} |\psi_{\bfq\bigradius}(\cdot-\bfa/\bfq)|^2 * |\ZdFT{\Delta_\largenum}-\delta_0|(\spectralpoint) \; d\nu_{\mfxn}(\spectralpoint) + \int_{\T^\dimension} |\psi_{\bigradius}(\bfq\spectralpoint-\bfa)|^2 d\nu_{\mfxn}(\spectralpoint) \\ 
&\lesssim \epsilon \Lpnorm{2}{\measurespace}{\mfxn}^2 + \nu_{\mfxn}(|\spectralpoint-\bfa/\bfq| \lesssim |\bfq R|^{-1}) . 
\end{align*}
For $\bfa/\bfq = 0$, $\nu_{\mfxn}(|\spectralpoint| \lesssim |\bfq R|^{-1}) \to \nu_{\mfxn}(0)$ as $R \to \infty$, but $\nu_{\mfxn}(0) = |\int_{\measurespace} \mfxn d \measure|^2 = 0$. 
For $\bfa/\bfq \neq 0$, $\nu_{\mfxn}(|\spectralpoint-\bfa/\bfq| \lesssim |\bfq R|^{-1}) \to \nu_{\mfxn}(\bfa/\bfq)$ as $R \to \infty$, but $\nu_{\mfxn}(\bfa/\bfq) = 0$ by our assumption on the rational spectrum. 
Since there are finitely many $a/q$ and $\bfa/\bfq$, we can finish by choosing $R$ large enough.
Note that our parameter $R$ depends on the spectral measure $\nu_{\mfxn}$ and consequently on the function $\mfxn$, in addition to $\epsilon$ and $Q$. 
\end{proof}

\appendix

\section{Estimates for mollified continuous $k$-spherical averages}
\label{appendix_A}

In this appendix, we sketch the \(L^p(\R^\dimension)\)-boundedness of the maximal functions 
\[ T_*f(\mathbf x) := \sup_{\lambda \in \N} |T_\lambda f(\mathbf x)| \]
defined by the averages 
\[ T_\lambda f = (\widecheck{\psi_{N/Q}} \star  d\sigma_{\lambda}) \star f, \]
where $N = N_\lambda = \lambda^{1/k}$ and $Q$ satisfies $1 \le Q \le (\log\lambda)^C$ for some constant $C > 0$. In Section~\ref{sec:main_term} we applied the Magyar-Stein-Wainger transference principle to this maximal function to obtain \(\ell^p(\Z^\dimension)\)-bounds.

We will need the following two propositions in our proof. 
\begin{prop}\label{proposition:log_approximate_identity}
Let $K \in \mathbb N$. For all $\lambda>1$ and $Q \ge 1$, one has 
\begin{equation}
\widetilde{\psi_{N/Q}} \star d\sigma_{\lambda} (\mathbf x) \lesssim_K  \frac{QN^{-\dimension}}{(1+|\mathbf x|/N|)^K}. 
\end{equation} 
\end{prop}

\begin{proof}
By rescaling, we only need to prove that 
\[
\widetilde{\psi_{1/Q}} \star d\sigma_1 (\mathbf x) \lesssim_K \frac{Q}{(1+|\mathbf x|)^K}.
\]
This is well-known for the spherical measure (see for example, equation (5.5.12) in \cite{grafakos08a}), but there is essentially no difference in the proof for the remaining $\degree$-spherical measures when $\degree \geq 3$. 
\end{proof}



Let $P_j$ denote the smooth Littlewood--Paley projection operator adpated to frequencies of size approximately $2^j$; that is, 
\[\widetilde{P_jf}(\bm\xi) := (\psi(\bm\xi/2^{j+1})-\psi(\bm\xi/2^j))\widetilde{f}(\bm\xi).\] 
\begin{prop}
\label{LPdecay}
For $\dimension \geq 2$ and $\degree \geq 2$, we have that 
\begin{equation}\label{deq:l^2bound}
\left\|\sup_{\lambda\in\N} |P_{j}f \star (\widetilde{\psi_{N/Q}} \star d\sigma_{\lambda})| \right\|_{L^2(\R^n)}
\lesssim 
(1+2^j)^{\frac{1}{2}-\frac{\dimension-1}{\degree}} \|f\|_{L^2(\R^n)}
.
\end{equation}
\end{prop}

\begin{proof}
From \cite{Hughes_Vinogradov} we know that $\widetilde{d\sigma_\lambda}(\xi) \lesssim (1+|\lambda^{1/\degree}\xi|)^{-\frac{\dimension-1}{\degree}}$. 
This implies that $\widetilde{P_jd\sigma_\lambda}(\xi) \lesssim (1+|\lambda^{1/\degree}2^j|)^{-\frac{\dimension-1}{\degree}}$. 
The bound for the supremum now follows from this and a similar bound for the derivative via the Sobolev embedding theorem. 
\end{proof}

\begin{proof}[Proof of Lemma~\ref{lemma:analytic_part}]
Fix $C>0$. 
Since $T_*$ is trivially bounded $L^\infty(\R^\dimension)$, we only need to show that it is also bounded on $L^p(\R^\dimension)$ for all $1 < p \leq 2$. 
We introduce a frequency decomposition of our multipliers depending on a fixed parameter $\Lambda \gg 1$ to be chosen later:
\begin{equation*}
\widetilde{P^{\text{low}}f}(\xi) := \psi_{\Lambda^{1/\degree}(\log\Lambda)^{-C}}(\xi) \, \widetilde{f}(\xi),
\end{equation*}
and 
\begin{equation*}
P^{\text{high}}f := f-P^{\text{low}}f. 
\end{equation*}
With this decomposition if $\lambda>\Lambda$, then $(P^{\text{\text{low}}}f) \star \widetilde{\psi_{\lambda^{1/\degree}(\log \lambda)^{-C}}} = P^{\text{low}}f$. 
Proposition~\ref{proposition:log_approximate_identity} implies the following pointwise bound: 
\begin{equation}
\label{almostapprox}
\sup_{\lambda \leq \Lambda} | f \star (\widetilde{\psi_{\lambda^{1/\degree}(\log \lambda)^{-C}}} \star d\sigma_{\lambda}) (x) | 
\lesssim_C 
(\log \Lambda)^C Mf(x), 
\end{equation}
and we also have for all $\lambda\in\N$ the bound 
\begin{equation}
\label{almostapprox2}
\sup_{\lambda \in \N} | P^{\text{low}}f \star (\widetilde{\psi_{\lambda^{1/\degree}(\log \lambda)^{-C}}} \star d\sigma_{\lambda}) (x) | 
\lesssim_C 
(\log \Lambda)^C Mf(x) 
.
\end{equation}

We first prove a restricted weak-type inequality via interpolation, splitting up $|\{ T_* f > \alpha \}| $ into three sets where we use \eqref{almostapprox2}, \eqref{almostapprox}, and \eqref{deq:l^2bound}.  
Let $F \subset \R^\dimension$ and $f := {\bf 1}_F$ denote its indicator function so that 
\begin{align*}
|\{ T_* f > \alpha \}| 
& \leq 
|\{  \sup_{\lambda\in\N} | P^{\text{low}}f * (\widetilde{\psi_{\lambda^{1/\degree}(\log \lambda)^{-C}}} \star d\sigma_{\lambda}) | > \alpha/2 \}| 
\\ & \hspace{5mm} + |\{  \sup_{\lambda\in\N} | P^{\text{high}}f \star (\check{\psi_{\lambda^{1/\degree}(\log \lambda)^{-C}}} * d\sigma_{\lambda}) | > \alpha/2 \}| 
\\
& \leq 
|\{  \sup_{\lambda\in\N} | P^{\text{low}}f \star (\widetilde{\psi_{\lambda^{1/\degree}(\log \lambda)^{-C}}} \star d\sigma_{\lambda}) | > \alpha/2 \}| 
\\ & \hspace{5mm} + |\{  \sup_{\lambda\leq\Lambda} | P^{\text{high}}f \star (\widetilde{\psi_{\lambda^{1/\degree}(\log \lambda)^{-C}}} \star d\sigma_{\lambda}) | > \alpha/4 \}| 
\\ & \hspace{5mm} + |\{  \sup_{\lambda>\Lambda} | P^{\text{high}}f \star (\widetilde{\psi_{\lambda^{1/\degree}(\log \lambda)^{-C}}} \star d\sigma_{\lambda}) | > \alpha/4 \}| 
\\ 
& \lesssim 
(\log\Lambda)^C \|f\|_{1} \alpha^{-1} 
+ |\{  \sup_{\lambda>\Lambda} | P^{\text{high}}f \star (\widetilde{\psi_{\lambda^{1/\degree}(\log \lambda)^{-C}}} \star d\sigma_{\lambda}) | > \alpha/4 \}|. 
\end{align*}
Observe that for each $x \in \R^\dimension$, we have 
\begin{equation*}
\sup_{\lambda>\Lambda} | P^{\text{high}}f \star (\widetilde{\psi_{\lambda^{1/\degree}(\log \lambda)^{-C}}} \star d\sigma_{\lambda})(x) | 
\leq 
\sum_{j \geq \log_2 \Lambda} \left( \sup_{\lambda>2^j} |P_{j}f \star (\widetilde{\psi_{\lambda^{1/\degree}(\log \lambda)^{-C}}} \star d\sigma_{\lambda})(x)| \right)
\end{equation*}
so that 
\begin{align*}
|\{ T_* f > \alpha \}| 
& \lesssim 
(\log\Lambda)^C \|f\|_{1} \alpha^{-1} 
+ |\{  \sum_{j \geq \log_2 \Lambda} \left( \sup_{\lambda>2^j} |P_{j}f \star (\widetilde{\psi_{\lambda^{1/\degree}(\log \lambda)^{-C}}} \star d\sigma_{\lambda}) | \right) > \alpha/4 \}|
\\ 
& \lesssim 
(\log\Lambda)^C \|f\|_{1} \alpha^{-1} 
+ \alpha^{-2} \sum_{j \geq \log_2 \Lambda} (1+2^{j/\degree})^{\frac{1}{2}-\frac{\dimension-1}{\degree}} \|f\|_2^2
\end{align*}
by Chebychev's inequality and Proposition~\ref{LPdecay}. 
Therefore 
\begin{align*}
|\{ T_* f > \alpha \}| 
& \lesssim 
(\log\Lambda)^C \|f\|_{1} \alpha^{-1} + \Lambda^{\frac{1}{2\degree}-\frac{\dimension-1}{\degree^2}} \|f\|_{2}^2 \alpha^{-2}
\\ 
& = 
(\log\Lambda)^C |F| \alpha^{-1} + \Lambda^{\frac{1}{2\degree}-\frac{\dimension-1}{\degree^2}} |F| \alpha^{-2}.
\end{align*}
Here $|F|$ denotes the Lebesgue measure of the set $F$. 

To interpolate between $L^1$ and $L^2$ we need $\frac{1}{2\degree}-\frac{\dimension-1}{\degree^2}<0$ which occurs precisely when $\dimension > \degree/2+1$. 
For any $1 < p < 2$ we choose $\Lambda>0$ depending on $0 < \alpha \leq 1$ so that both summands are dominated by $|F| \alpha^{-p}$, which yields the restricted weak-type inequality. 
The Marcinkiewicz interpolation theorem gives the strong-type inequality. 
\end{proof}

\bibliographystyle{amsplain}

\end{document}